\documentclass[11pt, a4paper]{article}

\usepackage{pb-diagram, lamsarrow,pb-lams}
\usepackage{amssymb}
\usepackage{amsthm}
\usepackage[T1]{fontenc}
\usepackage{latexsym}
\usepackage{textcomp}
\usepackage[ansinew]{inputenc}
\usepackage{exscale}
\usepackage{graphicx}
\usepackage{enumerate}

    
    \newcommand{\cl}{\mathrm{cl}}

    \newcommand{\aug}{\mathrm{aug\,}}

    \newcommand{\irr}{\mathrm{Irr\,}}
    \newcommand{\irrp}{\mathrm{Irr_p\,}}

    \newcommand{\PMod}{\mathrm{PMod}}
    
    \newcommand{\perf}{\mathrm{perf}}
    \newcommand{\Dperf} {\mathcal D ^{\perf}}
    \newcommand{\ith}{^{\mathrm{th}}}
    
    \newcommand{\Ann}{\mathrm{Ann}}
    \newcommand{\Fitt}{\mathrm{Fitt}}
    
    \newcommand{\std}{\mathrm{std}}
    
    \newcommand{\Frob}{\mathrm{Frob}}
    \newcommand{\Spec}{\mathrm{Spec}}
    \newcommand{\nr}{\mathrm{nr}}
    \newcommand{\Gl}{\mathrm{Gl}}
    
    \newcommand{\Aff}{\mathrm{Aff}}



    \newcommand{\gp}{G_{\fP}}
    \newcommand{\ip}{I_{\fP}}
    \newcommand{\frob}{\phi_{\fP}}

    \newcommand{\ab}{^{\mathrm{ab}}}
    
    \newcommand{\punkt}{^{\bullet}}




    \newcommand{\zpmal} {\Z_p \mal}

    \newcommand{\zpg} {\zp G}
    \newcommand{\qpg} {\qp G}
    \newcommand{\cp} {\C_p}

    \newcommand{\arat} {\ar@}
    \newcommand{\inj} {\arat{^{(}->}}
    \newcommand{\sur} {\arat{->>}}
    \newcommand{\equal} {\arat{=}}
    \def\barat[#1] {\arat{->}'[#1][#1#1]}
    \def\binj[#1] {\inj'[#1][#1#1]}
    \def\bsur[#1] {\sur'[#1][#1#1]}
    \def\bequal[#1] {\equal'[#1][#1#1]}

    \newcommand{\ram}{_{\mathrm{ram}}}

    \newcommand{\hcl}{^{\mathrm{cl}}}

    \newcommand{\kot}[1]{K_{0}T({#1})}

    \newcommand{\beq}{\begin{equation}}
    \newcommand{\eeq}{\end{equation}}

    \newcommand{\mc}{\mathcal}

    \newcommand{\half}{{\textstyle \frac{1}{2}}}



     \newcommand{\fa}{\goth{a}}


     \newcommand{\fo}{\goth{o}}
     \newcommand{\fp}{\goth{p}}

     \newcommand{\fA}{\goth{A}}

     \newcommand{\fM}{\goth{M}}

     \newcommand{\fP}{\goth{P}}
     
     \newcommand{\fR}{\goth{R}}


     \newcommand{\C}{\mathbb{C}}

     \newcommand{\F}{\mathbb{F}}
     

     
     \newcommand{\N}{\mathbb{N}}
     \newcommand{\Q}{\mathbb{Q}}
     \newcommand{\R}{\mathbb{R}}

     \newcommand{\Z}{\mathbb{Z}}



    \newcommand{\ol}[1]{\overline{#1}}

    \newcommand{\ti}[1]{\tilde{#1}}
    
    \newcommand{\sm}{\setminus}
    \newcommand{\op}{\oplus}
    
    \newcommand{\me}{^{-1}}
    \newcommand{\mal}{^{\times}}


    \newcommand{\zg}{{\mathbb{Z}G}}
    \newcommand{\qg}{{\mathbb{Q}G}}
    
    \newcommand{\rg}{{\mathbb{R}G}}

    \newcommand{\zp}{{\mathbb{Z}_p}}
    
    \newcommand{\qp}{{\mathbb{Q}_p}}


    \newcommand{\into}{\rightarrowtail}
    
    \newcommand{\lto}{\longrightarrow}




    \newcommand{\ga}{\gamma}
    \newcommand{\Ga}{\Gamma}
    \newcommand{\si}{\sigma}
    
    \newcommand{\de}{\delta}
    \newcommand{\De}{\Delta}
    
    \newcommand{\la}{\lambda}
    \newcommand{\La}{\Lambda}
    
    \newcommand{\Om}{\Omega}
    \newcommand{\om}{\omega}
    \newcommand{\al}{\alpha}
    \newcommand{\ve}{\varepsilon}



    \newcommand{\bmp}{\begin{minipage}}
    \newcommand{\emp}{\end{minipage}}
    \newcommand{\btb}{\begin{tabular}}
    \newcommand{\etb}{\end{tabular}}
    \newcommand{\barr}{\begin{array}}
    \newcommand{\earr}{\end{array}}
    \newcommand{\bit}{\begin{itemize}}
    \newcommand{\eit}{\end{itemize}}
    \newcommand{\ben}{\begin{enumerate}}
    \newcommand{\een}{\end{enumerate}}
    \newcommand{\bct}{\begin{center}}
    \newcommand{\ect}{\end{center}}
    \newcommand{\bfr}{\begin{flushright}}
    \newcommand{\efr}{\end{flushright}}
    \newcommand{\bea}{\begin{eqnarray*}}
    \newcommand{\eea}{\end{eqnarray*}}
    \newcommand{\bqo}{\begin{quote}}
    \newcommand{\eqo}{\end{quote}}
    \newcommand{\bdc}{\begin{description}}
    \newcommand{\edc}{\end{description}}


    \newcommand{\Hom}{\mathrm{Hom}}

    \newcommand{\Ext}{\mathrm{Ext}}
    
    \newcommand{\Gal}{\mathrm{Gal}}

    \newcommand{\ind}{\mathrm{ind\,}}
    \newcommand{\infl}{\mathrm{infl\,}}
    
    \newcommand{\mod}{\mathrm{\ mod\ }}
    \newcommand{\res}{\mathrm{res\,}}
    \newcommand{\quot}{\mathrm{quot\,}}
    
    \newcommand{\Det}{\mathrm{Det\,}}

\input{amssym.def}
\input{amssym}
\input xy
\xyoption{all} \CompileMatrices

\newtheorem{theo}{Theorem}[section]
\newtheorem{prop}[theo]{Proposition}
\newtheorem{lem}[theo]{Lemma}
\newtheorem{cor}[theo]{Corollary}
\newtheorem{defi}[theo]{Definition}
\newtheorem{con}[theo]{Conjecture}
\newtheorem{rem}[theo]{Remark}

\pagestyle{myheadings} \markboth{Andreas Nickel}{Stickelberger elements and the ETNC}
\setlength{\topmargin}{0cm}
\setlength{\textheight}{22cm}
\setlength{\textwidth}{15cm}
\hoffset = -1cm

\newcommand{\myfootnote}[1]{%
\renewcommand{\thefootnote}{}%
\footnotetext{#1}%
\renewcommand{\thefootnote}{\arabic{footnote}}%
}

\begin{document}
\xymatrixrowsep{3pc} \xymatrixcolsep{3pc}

\title{Integrality of Stickelberger elements and the equivariant Tamagawa number conjecture}
\author{Andreas Nickel\thanks{I acknowledge financial support provided by the DFG}}
\date{}

\maketitle

\begin{abstract}
    Let $L/K$ be a finite Galois CM-extension of number fields with Galois group $G$.
    In an earlier paper, the author has defined a module $SKu(L/K)$ over the center
    of the group ring $\zg$ which coincides with the Sinnott-Kurihara ideal if $G$ is abelian and,
    in particular, contains many Stickelberger elements.
    It was shown that a certain conjecture on the integrality of $SKu(L/K)$ implies
    the minus part of the equivariant Tamagawa number conjecture at an odd prime $p$
    for an infinite class of (non-abelian) Galois CM-extensions of number fields which are at most tamely ramified
    above $p$, provided that Iwasawa's $\mu$-invariant vanishes. Here, we prove a relevant part of this
    integrality conjecture which enables us to deduce the minus-$p$-part of the equivariant Tamagawa number conjecture
    from the vanishing of $\mu$ for the same class of extensions. As an application we prove the non-abelian
    Brumer and Brumer-Stark conjecture outside the $2$-primary part for every monomial Galois extension of $\Q$
    provided that certain $\mu$-invariants vanish.
\end{abstract}

\section*{Introduction}
\myfootnote{{\it 2010 Mathematics Subject Classification} 11R42, 11R23}
\myfootnote{{\it Keywords:} Tamagawa number, equivariant $L$-values, Stickelberger elements}
Let $L/K$ be a finite Galois extension of number fields with Galois group $G$.
Burns \cite{Burns_equivariantI} used complexes arising from \'{e}tale cohomology of the
constant sheaf $\Z$ to define a canonical element $T\Om(L/K)$ of
the relative $K$-group $K_0(\zg, \R)$. This element relates the
leading terms at zero of Artin $L$-functions attached to $L/K$ to
natural arithmetic invariants.
It was shown that the vanishing of $T\Om(L/K)$
is equivalent to the equivariant Tamagawa number conjecture (ETNC) for the pair $(h^0(\Spec(L)), \zg)$ (cf.~\cite[Theorem 2.4.1]{Burns_equivariantI}).\\

The vanishing of $T\Om(L/K)$ is known to be true if $L$ is absolutely abelian
as proved by Burns and Greither \cite{Burns_Greither_ETNC}
with the exclusion of the $2$-primary part;  Flach \cite{Flach, Flach-2} extended the argument to cover the
$2$-primary part as well. Slightly weaker results in this cyclotomic case have been settled
independently by Ritter and Weiss \cite{towardI, representing}, Huber and Kings \cite{HK-Dirichlet}.
Some relatively abelian results are due to
Bley \cite{Bley_ETNC}; he showed that if $L/K$ is a finite abelian extension, where $K$ is an imaginary quadratic
field which has class number one, then the ETNC holds for all intermediate extensions $L/E$ such that $[L:E]$
is odd and divisible only by primes which split completely in $K/\Q$. Finally, if $L/K$ is a CM-extension and $p$ is odd,
the ETNC at $p$ naturally decomposes into a plus and a minus part; it was shown by the author \cite{ich-tame} that
the minus part of the ETNC at $p$ holds if $L/K$ is abelian and at most tamely ramified above $p$, and the Iwasawa $\mu$-invariant
vanishes if $p$ divides $|G|$ (and some additional technical condition is fulfilled). Note that the vanishing of $\mu$ is a long standing
conjecture of Iwasawa theory; the most general result is still due to Ferrero
and Washington \cite{Ferrero_Wash} and says that $\mu=0$ for absolutely abelian extensions.\\

For non-abelian extensions, the results are rather sparse.
Burns and Flach \cite{Burns_Flach_II} have given a proof for an infinite class of quaternion extensions over the rationals
and Navilarekallu \cite{Navilarekallu} has treated a specific $A_4$-extension over $\Q$.
Further progress has recently been made by Johnston and the author \cite{hybrid-ETNC}.

If $L/K$ is a CM extension,
the author \cite{ich-tameII} has introduced a module $SKu(L/K)$ over the center of the group ring $\zg$
which is a noncommutative analogue of the Sinnot-Kurihara ideal (cf.~\cite[p.~193]{Sinnott-ideal})
and was already implicitly used in \cite{ich-stark} and \cite{Burns_Johnston}. An integrality conjecture
on $SKu(L/K)$ has been formulated and it was shown that it is implied by the ETNC in many cases and follows from the results
in \cite{Barsky}, \cite{Cassou}, \cite{Deligne-Ribet} if $G$ is abelian.
Assuming the validity of this integrality conjecture, the minus part of the ETNC at $p$ was
deduced from the conjectural vanishing of $\mu$, provided that the ramification above $p$ is at most tame
(and, as in the abelian case, some technical extra assumption holds). Moreover, it follows from the results in
\cite{ich-stark} that for the case at hand the non-abelian analogues of Brumer's conjecture,
of the Brumer-Stark conjecture and of the strong Brumer-Stark property (as formulated in \cite{ich-stark}) hold,
provided that $\mu=0$ and the integrality conjecture holds.\\

Most of these results make heavily use of the validity of the equivariant Iwasawa main conjecture (EIMC) attached to the extension $L^+_{\infty} / K$,
where $L^+_{\infty}$ is the cyclotomic $\zp$-extension of $L^+$ which is the maximal real subfield of $L$.
Note that the EIMC is known for abelian extensions of totally real number fields with Galois group $\mc G$
such that $\mc G$ is a $p$-adic Lie group of dimension $1$ (cf.~\cite{Wiles-main,towardI}).
More recently, Ritter and Weiss \cite{EIMC-theorem} have shown that the EIMC (up to its uniqueness statement) holds
for arbitrary $p$-adic Lie groups of dimension $1$
provided that $\mu$ vanishes. In fact, this can be generalized to higher dimensional $p$-adic Lie groups
as shown by Kakde \cite{Kakde-mc} and, independently, by Burns \cite{Burns-mc}.
Note that Kakde in fact provides an independent proof also in the case of dimension $1$.\\

In this paper, we define a variant $SKu'(L/K)$ of the Sinnott-Kurihara module which is contained in
$SKu(L/K)$ and in fact equals $SKu(L/K)$ for abelian $G$. Let $\mc M(G)$ be a maximal order in $\qg$
containing $\zg$; for any ring $\La$, we write $\zeta(\La)$ for the subring of all elements which are central in $\La$.
The first main result is the following theorem which will be proved in \S \ref{sec:int-results}.
Recall that a character of a finite group is called \emph{monomial} if it is induced by a linear character of a subgroup.
A finite group is called monomial
if each of its (complex) irreducible characters is monomial.

\begin{theo} \label{SKu-MaxOrd}
Let $L/K$ be a Galois extension of number fields with Galois group $G$. If $G$ is monomial, then
$$SKu'(L/K) \subseteq \zeta(\mc M(G)).$$
\end{theo}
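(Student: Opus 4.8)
The plan is to verify the integrality one irreducible character at a time, using the standard description of the center of a maximal order together with the monomiality of $G$. For $z\in\zeta(\qg)$ and $\chi\in\irr(G)$ let $z_\chi\in\C$ denote the corresponding component, so that $z=\sum_\chi z_\chi e_\chi$ with $e_\chi$ the primitive central idempotents of $\C G$. The center of a maximal order admits the well known description: $z\in\zeta(\mc M(G))$ if and only if $z_\chi$ is an algebraic integer for every $\chi\in\irr(G)$, and $z\in\zeta(\qg)$ already forces $z_\chi\in\Q(\chi)$. Now $SKu'(L/K)\subset\zeta(\qg)$ holds by construction --- this rests on the rationality of the relevant special values of Artin $L$-functions (Siegel--Klingen, Deligne--Ribet) --- and $SKu'(L/K)$ is a $\zeta(\zg)$-submodule of $SKu(L/K)$ with $\zeta(\zg)\subseteq\zeta(\mc M(G))$. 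Hence it suffices to show that $x_\chi$ is an algebraic integer for every $\chi\in\irr(G)$ and every $x$ in some $\zeta(\zg)$-generating set of $SKu'(L/K)$.

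Here the hypothesis that $G$ is monomial enters. Fix $\chi\in\irr(G)$ and write $\chi=\ind_H^G\psi$ with $\psi$ a linear character of $H\leq G$. Put $N:=\ker\psi$; since $\psi$ has abelian image, $N\supseteq[H,H]$ is normal in $H$, so with $E:=L^H$ and $F:=L^N$ we obtain an abelian (indeed cyclic) sub-extension $F/E$ of $L/K$ with $\gal(F/E)\cong H/N$, and $\psi$ is inflated from a faithful character $\overline\psi$ of $\gal(F/E)$. By inductivity of Artin $L$-functions, the $\chi$-component of the equivariant Stickelberger element of $L/K$ coincides (up to the usual conjugation conventions) with the $\overline\psi$-component of the Stickelberger element of $F/E$, for the sets of places of $E$ lying above those used for $L/K$; and one checks that the Euler-factor correction terms built into the definition of $SKu'$ specialise compatibly under $H\leq G$. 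Since $SKu'(F/E)$ is the classical Sinnott--Kurihara ideal $SKu(F/E)$, this identifies $x_\chi$ with the $\overline\psi$-component of an element of $SKu(F/E)$.

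Finally we invoke the abelian case. By the results of Barsky, Cassou-Nogu\`es and Deligne--Ribet recalled in the introduction, $SKu(F/E)\subseteq\zeta(\mc M(\gal(F/E)))$; since $\gal(F/E)$ is abelian, $\mc M(\gal(F/E))=\zeta(\mc M(\gal(F/E)))=\prod_{\overline\psi}\mathcal{O}_{\Q(\overline\psi)}$, so the $\overline\psi$-component of every element of $SKu(F/E)$ is an algebraic integer. Combining this with the previous paragraph, $x_\chi$ is an algebraic integer for all $x$ in the chosen generating set and all $\chi\in\irr(G)$, and by the first paragraph $SKu'(L/K)\subseteq\zeta(\mc M(G))$.

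I expect the main obstacle to be the specialisation-under-induction step of the second paragraph. One has to keep precise track of the sets of ramified and of auxiliary (``smoothing'') places, of the behaviour of the Euler factors at places that are unramified in $L/K$ --- exactly the data by which $SKu'$ is designed to differ from $SKu$ --- and confirm that restricting this data along $H\leq G$ to the abelian layer $F/E$ reproduces on the nose the corresponding ingredients of $SKu(F/E)$. Equivalently, one must verify that the variant $SKu'(L/K)$ has been set up so that a generating set of it is entirely determined by data visible on the monomial pairs $(H,\psi)$; this compatibility is precisely the point of passing from $SKu$ to $SKu'$, and it is where the bulk of {\S}4 will go.
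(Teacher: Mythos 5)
Your overall strategy --- check integrality component by component, use monomiality of $G$ to reduce the $\chi$-component to an abelian subextension, and invoke Barsky/Cassou-Nogu\`es/Deligne--Ribet there --- is indeed the same route the paper takes, and you have correctly identified where the real work lies. But the place where you wave your hands (``one checks that the Euler-factor correction terms built into the definition of $SKu'$ specialise compatibly'') is not a mere bookkeeping step, and your phrasing suggests a misreading of what $SKu'$ actually contains. The module $U'_{\fp}$ is generated over $\zeta(\Z\gp)$ by \emph{two} elements: $\nr(1-\ve_{\fp}\frob\me)$, which is an Euler-type factor and does behave well under restriction to the abelian layer, and $\nr(N_{\ip})$, which is not an Euler factor at all. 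The latter does not ``specialise compatibly''; its $\chi$-component is $|\ip|^{\chi(1)}$ when $\ip\subset\ker\chi$ and is \emph{zero} otherwise. You must therefore split into cases according to which characters are killed, and this is precisely what is hiding behind your final paragraph.

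The paper makes this splitting structural. A $\zeta(\zg)$-generator of $SKu'(L/K)$ is, up to $\zeta(\zg)$-multiples, of the form $\prod_{\fp\in J}\nr(N_{\ip})\cdot\theta_{S_J}^T(0)$ for a subset $J\subseteq S\ram$ (the set of primes at which the $N_{\ip}$-generator was chosen) and $S_J=S_\infty\cup S\ram\sm J$. Theorem \ref{SKu-MaxOrd-II} then observes that $\prod_{\fp\in J}\nr(N_{\ip})$ has vanishing $\chi$-component unless $\chi$ factors through $G_J=\Gal(L_J/K)$, where $L_J$ is the maximal subfield unramified outside $S_J$; this lets one replace $\theta_{S_J}^T(L/K,0)$ by a lift of $\theta_{S_J}^T(L_J/K,0)$, and since any quotient of a monomial group is monomial, one only needs the Stickelberger integrality statement (Theorem \ref{Stickelberger-int-II}) for the \emph{quotient} extension $L_J/K$, where $Hyp(S_J,T)$ is genuinely satisfied. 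Your direct $\chi$-by-$\chi$ version would also go through, but you would still have to reproduce this case distinction by hand, and your ``abelian layer'' $F/E=L^N/L^H$ has its own subtlety: the set $S\ram(L/K)$ restricted to $E$ may contain places unramified in $F/E$, so that the induced data is not literally of the form appearing in the definition of $SKu(F/E)$; one needs the extra Euler factors introduced by such places to land in $\Z[\Gal(F/E)]$, which they do, but this too must be said. In short: same idea, but the part you defer is exactly the content of \S4, and the ``Euler-factor'' language undersells the role of the $\nr(N_{\ip})$-generators.
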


Now let $S$ and $T$ be two finite sets of places of $K$ such that $S$ and $T$ are disjoint and $S$ contains the set
$S_{\infty}$ of all infinite places of $K$. One can associate to $S$ and $T$ so-called Stickelberger elements
$\theta_S^T$ which lie in the center of the group ring algebra $\Q G$. These Stickelberger elements are defined via
values of Artin $L$-functions at zero and are closely related
to the Sinnott-Kurihara ideal $SKu(L/K)$; more precisely, they lie in $SKu(L/K)$ under suitable hypotheses on $S$ and $T$.
For instance, it suffices to assume the following hypotheses to which we will refer as $Hyp(S,T)$:
$S$ contains the set $S\ram$ of all ramified primes and no non-trivial root of unity in $L$
is congruent to $1$ modulo all primes $\fP \in T(L)$; here, for any set $T$ of places of $K$,
we write $T(L)$ for the set of places of $L$ which lie above those in $T$. \\

Now assume that the Galois group $G$ decomposes as $G = H \times C$, where $H$ is monomial and $C$ is abelian.
As before, let $\mc M(H)$ be a maximal order in $\Q H$ containing $\Z H$. Then we may view
$\mc M(H)[C]$ as an order in $\Q G$ and we have the following integrality statement for Stickelberger elements.

\begin{theo} \label{Stickelberger-int}
Let $L/K$ be a Galois extension of number fields with Galois group $G = H \times C$, where $H$ is monomial and $C$ is abelian.
Then $$\theta_S^T \in \zeta (\mc M(H)[C]) = \zeta(\mc M(H))[C]$$
whenever $Hyp(S,T)$ is satisfied.
\end{theo}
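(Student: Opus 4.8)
The plan is to reduce Theorem~\ref{Stickelberger-int} to the ``maximal order'' statement Theorem~\ref{SKu-MaxOrd} by exploiting the product decomposition $G = H \times C$ and the known relationship between Stickelberger elements and the Sinnott--Kurihara module. First I would recall that under $Hyp(S,T)$ the Stickelberger element $\theta_S^T$ lies in $SKu'(L/K)$; this is essentially the defining property of the variant module (it should be extracted from the construction of $SKu'$ in the body of the paper, where the relevant generators are built from partial zeta functions and the auxiliary $T$-factor which makes the elements integral away from the trivial character contribution). So it suffices to prove the inclusion $SKu'(L/K) \subset \zeta(\mc M(H)[C])$ and then identify $\zeta(\mc M(H)[C])$ with $\zeta(\mc M(H))[C]$.

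The identification $\zeta(\mc M(H)[C]) = \zeta(\mc M(H))[C]$ is the easy algebraic step: since $C$ is central in $G = H\times C$, an element of $\mc M(H)[C]$ is central if and only if each of its $C$-coefficients in $\mc M(H)$ is central, which gives the equality directly (one uses that $\mc M(H)$ is a $\Z$-order so the two natural orders $\mc M(H)[C]$ and $\mc M(H)\ot_{\Z}\Z C$ coincide inside $\Q G = \Q H \ot_{\Q}\Q C$). For the main inclusion, I would use that $\Q G = \Q H \ot_{\Q} \Q C$ and that a maximal order $\mc M(G)$ of $\Q G$ may be chosen of the form $\mc M(H)\ot_{\Z}\mc O_C$ for a suitable maximal (hence, $C$ being abelian, the unique) order $\mc O_C$ of $\Q C$; more precisely, after decomposing $\Q C = \prod_\chi \Q(\chi)$ over $\widehat C/\!\sim$ one reduces to the analogous statement for each CM-extension $L^{\ker\chi}/K$ twisted by the character $\chi$ of $C$. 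Along each such $\chi$-component, $\theta_S^T$ specializes to a Stickelberger element attached to a Galois extension with group $H$ (a monomial group), with coefficients now in $\Q(\chi)$ rather than $\Q$. Applying Theorem~\ref{SKu-MaxOrd} in that twisted setting — i.e. that the corresponding $SKu'$ lands in $\zeta(\mc M(H))\ot_{\Z}\mc O_{\Q(\chi)}$ — and reassembling the components over all $\chi$ yields membership in $\zeta(\mc M(H))\ot_{\Z}\mc O_C = \zeta(\mc M(H))[C]$.

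The main obstacle I anticipate is making the ``twist by a character of $C$'' argument precise at the level of the \emph{integral} structures, not just rationally: Theorem~\ref{SKu-MaxOrd} is stated for extensions with coefficients in $\Z$ (equivalently $\zg$), and one must check that its proof — which presumably proceeds character by character using that each irreducible character of the monomial group $H$ is induced from a linear character, together with a computation of the relevant Gauss-sum / conductor contributions — goes through verbatim when the base ring $\Z$ is replaced by the ring of integers $\mc O_{\Q(\chi)}$ in an abelian number field, i.e. that no denominators are introduced by the cyclotomic coefficients. This is plausible because $\mc O_{\Q(\chi)}$ is a Dedekind domain unramified outside $|C|$ and the inducing construction is insensitive to the coefficient ring, but it requires care at primes dividing $|C|$. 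A clean alternative, which I would pursue if the direct twisting is awkward, is to apply Theorem~\ref{SKu-MaxOrd} directly to the extension $L/K$ with its full group $G = H\times C$ — noting that a direct product of a monomial group with an abelian group is again monomial — obtaining $SKu'(L/K)\subset \zeta(\mc M(G))$, and then to improve $\mc M(G)$ to the possibly-smaller order $\mc M(H)[C]$ by a local argument at the primes where these two orders differ (which can only be primes dividing $|H|$, where one invokes the monomial structure of $H$ to control the central conductors). Either way, the heart of the matter is a denominator estimate reducing everything to Theorem~\ref{SKu-MaxOrd}.
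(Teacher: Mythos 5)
Your proposed reduction runs in the wrong direction. In the paper, Theorem~\ref{SKu-MaxOrd} (and its stronger form, Theorem~\ref{SKu-MaxOrd-II}) is \emph{deduced from} Theorem~\ref{Stickelberger-int-II}, not the other way round: the proof of Theorem~\ref{SKu-MaxOrd-II} explicitly invokes Theorem~\ref{Stickelberger-int-II} to know that a lift $\ti\theta_{S_J}^T$ exists in $\zeta(\mc M(G))$. So using Theorem~\ref{SKu-MaxOrd} as the input for a proof of Theorem~\ref{Stickelberger-int} is circular. Moreover, even setting circularity aside, your ``clean alternative'' of first deducing $SKu'(L/K)\subset\zeta(\mc M(G))$ and then ``improving'' $\mc M(G)$ to $\mc M(H)[C]$ is not an improvement in the easy direction: one has $\zeta(\mc M(H))[C]\subset\zeta(\mc M(G))$, since $\Z[C]$ is strictly contained in the maximal order $\prod_\chi \mc O_{\Q(\chi)}$ of $\Q C$ at primes dividing $|C|$ (not $|H|$, as you write — $\zeta(\mc M(H))$ is already maximal in $\zeta(\Q H)$). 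Passing to the smaller order $\mc M(H)[C]$ is precisely the nontrivial content, and a ``local argument'' that you don't specify cannot substitute for it.

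The actual engine of the paper's proof is different in kind from what you sketch. One decomposes $\zeta(\qp[H\times C])=\bigoplus_{\chi\in\irrp(H)/\sim}\qp(\chi)[C]$ — a decomposition over irreducible characters of $H$, not of $C$ — and then, for a fixed $\chi\in\irr(H)$ written as $\chi=\ind_U^H\psi$ with $\psi$ linear (this is where monomiality enters), one uses Artin formalism to identify
\[
\sum_{\la}L_S^T(L/K,\chi\cdot\la,r)\,e_\la \;=\; \sum_{\la}L_{S'}^{T'}\bigl(L^{[U,U]}/L^{U\times C},\psi^{\mathrm{ab}}\cdot\la,r\bigr)\,e_\la,
\]
i.e. the $\chi$-component of $\theta_S^T$ \emph{equals} the $\psi^{\mathrm{ab}}$-component of a Stickelberger element of the honestly \emph{abelian} extension $L^{[U,U]}/L^{U\times C}$. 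Integrality for abelian extensions (Deligne--Ribet, Cassou-Nogu\`es, Barsky; Theorems~\ref{BCDR-theo-0} and~\ref{BCDR-theo-r}) then puts this component into $\zp(\psi)[C]$, and the final $\zp(\chi)[C]$ is obtained by intersecting $\qp(\chi)[C]\cap\zp(\psi)[C]$. Your proposal gestures at ``inducing from linear characters'' inside a hypothetical proof of Theorem~\ref{SKu-MaxOrd}, but never states the induction/inflation step that transports the $L$-value to an abelian subextension, which is the idea that makes the integrality go through without any denominators. Without that, the ``twist by a character of $C$'' plan has no mechanism to produce integrality at primes dividing $|C|$ or $|H|$.
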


We will give a more precise statement and its proof in \S \ref{sec:int-results}. In fact, we will prove a more general result
involving also Stickelberger elements which are defined via values of Artin $L$-functions at negative integers.\\

Now let $p$ be a prime and let $L$ be either a totally real field or a CM-field. Let $\mc L$ be the cyclotomic $\zp$-extension
of $L(\zeta_p)$, where $\zeta_p$ denotes a primitive $p$-th root of unity. Moreover, let $X_{\std}$ be the `standard' Iwasawa
module which is the projective limit of the $p$-parts of the class groups in the cyclotomic tower of $L(\zeta_p)$.
We will say that the Iwasawa $\mu$-invariant attached to $L$ and $p$ vanishes if the $\mu$-invariant of $X_{\std}$ vanishes.
Now let $L/K$ be a Galois CM-extension of number fields with arbitrary Galois group $G$.
Then Theorem \ref{Stickelberger-int} is the key in proving our main result.

\begin{theo} \label{ETNC-minus}
Let $L/K$ be a Galois CM-extension of number fields with Galois group $G$ and let $p$ be a non-exceptional prime.
If the Iwasawa $\mu$-invariant attached to $L$ and $p$ vanishes, then
the $p$-minus part of the ETNC for the pair $(h^0(\Spec(L)), \Z G)$ is true.
\end{theo}

For a fixed extension $L/K$ there is only a finite number of exceptional primes; for a precise definition see \S \ref{sec:ETNC-almost}
(Definition \ref{defi:exceptional}),
where we will prove Theorem \ref{ETNC-minus}.
Finally, we obtain the following corollaries.

\begin{cor} \label{cor_int_conj}
Let $L/K$ be a Galois CM-extension of number fields with Galois group $G$ and let $p$ be a non-exceptional prime.
If the Iwasawa $\mu$-invariant attached to $L$ and $p$ vanishes,
then the $p$-part of the integrality conjecture (Conjecture \ref{integrality-conj} below) holds.
\end{cor}

\begin{cor} \label{cor-non-ab-stark}
Let $L/K$ be a Galois CM-extension of number fields with Galois group $G$ and let $p$ be a non-exceptional prime.
If the Iwasawa $\mu$-invariant attached to $L$ and $p$ vanishes, then the $p$-parts
of the following conjectures hold:
\ben[(i)]
    \item
    the non-abelian Brumer conjecture \cite[Conjecture 2.1]{ich-stark}
    \item
    the non-abelian Brumer-Stark conjecture \cite[Conjecture 2.6]{ich-stark}
    \item
    the minus part of the central conjecture (Conjecture 2.4.1) of Burns \cite{Burns-derivatives}
    \item
    the minus-part of the Lifted Root Number Conjecture of Gruenberg, Ritter and Weiss \cite{GRW}.
\een
Moreover, $L/K$ fulfills the non-abelian strong Brumer-Stark property  at $p$ (cf.~\cite[Definition 3.5]{ich-stark}).
\end{cor}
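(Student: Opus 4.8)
The plan is to obtain every assertion as a formal consequence of Theorem \ref{ETNC-minus} and Corollary \ref{cor_int_conj}, combined with the standard dictionary between the ETNC and the other conjectures in the list; no further arithmetic input is needed. Throughout one uses that, since $p$ is odd, the algebra $\zp G$ splits along the central idempotents $e^{\pm} = \half(1 \pm \sigma)$, with $\sigma$ the complex conjugation, and that all the functorial objects entering the formulations below (relative $K$-groups and refined Euler characteristics, the invariant $T\Omega(L/K)$, Stickelberger elements and Fitting invariants) respect this decomposition; so it suffices to work on the $e^-$-components, which is precisely what ``minus part'' refers to.

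For (iii) and (iv) I would invoke the equivalences, due to Burns, between the vanishing of $T\Omega(L/K) \in K_0(\zg, \R)$ --- equivalently the ETNC for $(h^0(\Spec(L))(0), \Z G)$, cf.~\cite{Burns_equivariantI} --- on the one hand, and the Lifted Root Number Conjecture of Gruenberg, Ritter and Weiss \cite{GRW} and the central conjecture (Conjecture 2.4.1) of Burns \cite{Burns-derivatives} on the other. Applying $e^-$ and completing at $p$ transports the $p$-minus part of the ETNC supplied by Theorem \ref{ETNC-minus} to the $p$-minus parts of (iii) and (iv).

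For (i), (ii) and the strong Brumer-Stark property I would route the argument through the integrality conjecture. Corollary \ref{cor_int_conj} gives the $p$-part of Conjecture \ref{integrality-conj} under the present hypotheses. Feeding this, together with the assumption $\mu = 0$ and the equivariant Iwasawa main conjecture for $L^+_{\infty}/K$ --- which is unconditional in the needed generality once $\mu$ vanishes, by \cite{EIMC-theorem} (or \cite{Kakde-mc, Burns-mc}) --- into the implications established in \cite{ich-stark}, one concludes the $p$-parts of the non-abelian Brumer conjecture \cite[Conjecture 2.1]{ich-stark} and of the non-abelian Brumer-Stark conjecture \cite[Conjecture 2.6]{ich-stark}, as well as the non-abelian strong Brumer-Stark property at $p$ (cf.~\cite[Definition 3.5]{ich-stark}).

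Since Theorem \ref{ETNC-minus}, Corollary \ref{cor_int_conj} and the corollary at hand all share the same hypotheses, the only point that genuinely needs attention is to verify that the notion of non-exceptional prime fixed in \S5 (Definition \ref{defi:exceptional}) is chosen restrictively enough to absorb the finitely many primes that must be excluded for the implications of \cite{ich-stark} and for the ETNC-equivalences used in (iii) and (iv) to apply. I expect this bookkeeping --- collecting the bad primes contributed at each stage into a single exceptional set --- to be the only, and purely technical, obstacle.
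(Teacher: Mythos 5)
Your overall strategy --- deduce everything from Theorem \ref{ETNC-minus} --- matches the paper, but the details of the deduction deviate in a way that leaves a genuine gap for item (iii) and introduces an unnecessary detour for items (i), (ii) and the strong Brumer--Stark property.

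For (iii): Burns' central conjecture (Conjecture 2.4.1 of \cite{Burns-derivatives}) is \emph{not} equivalent to the ETNC; the paper uses only a one-way implication, namely \cite[Theorem 4.1.1]{Burns-derivatives}, and that implication requires the $p$-part of the roots of unity $\mu_L(p)$ to be a cohomologically trivial $G$-module. This is exactly where condition (iii) of Definition \ref{defi:exceptional} enters: $L\hcl \not\subset (L\hcl)^+(\zeta_p)$ forces $\zeta_p \notin L$, hence $\mu_L(p)$ is trivial and therefore cohomologically trivial. You flag the exceptional-set bookkeeping as a ``purely technical obstacle'', but in fact this is the one place where a specific clause of the definition is doing real work, and claiming an equivalence papers over the hypothesis that makes the implication available.

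For (i), (ii) and strong Brumer--Stark: you route through Corollary \ref{cor_int_conj} (the integrality conjecture) plus the EIMC plus unspecified ``implications established in \cite{ich-stark}''. The paper's route is shorter and cleaner: Theorem \ref{ETNC-minus-II} together with \cite[Theorem 5.3]{ich-stark} gives the non-abelian strong Brumer--Stark property at $p$ directly, \cite[Proposition 3.8]{ich-stark} then gives (ii), and \cite[Lemma 2.9]{ich-stark} gives (i) from (ii). In particular the logical order is strong Brumer--Stark $\Rightarrow$ (ii) $\Rightarrow$ (i), not parallel consequences of the integrality conjecture, and there is no need to re-invoke the EIMC (it has already been consumed inside the proof of Theorem \ref{ETNC-minus-II}). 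Your detour through Corollary \ref{cor_int_conj} is not wrong, but it adds a step and a dependence (the integrality conjecture) that \cite[Theorem 5.3]{ich-stark} makes unnecessary once the minus $p$-part of the ETNC is in hand. Item (iv) is handled as you say, via \cite[Theorem 2.3.3]{Burns_equivariantI}.
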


We will recall the precise statement of the non-abelian Brumer and Brumer-Stark conjecture in \S \ref{sec:Brumer-Stark}.

\begin{cor} \label{cor-etnc}
Let $L/K$ be a tamely ramified Galois CM-extension of number fields with Galois group $G$ and let $p$ be a non-exceptional prime.
If the Iwasawa $\mu$-invariant attached to $L$ and $p$ vanishes,
then the minus-$p$-parts of the central conjecture (Conjecture 3.3) of Breuning and Burns \cite{BB_at_1} and of
the ETNC for the pair $(h^0(\Spec(L))(1),\Z G)$ are valid.
\end{cor}

Finally, a further nice consequence of our results is the following theorem.

\begin{theo} \label{non-abelian-Stickelberger}
Let $L$ be any monomial Galois CM-extension of $\Q$.
Assume that the Iwasawa $\mu$-invariant attached to $L$ and $p$ vanishes for every odd prime $p$ which ramifies in $L$ or divides $[L:\Q]$.
Then the non-abelian Brumer conjecture and the non-abelian Brumer-Stark conjecture are true outside the $2$-primary parts.
\end{theo}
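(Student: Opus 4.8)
The plan is to reduce the non-abelian Brumer and Brumer-Stark conjectures for an arbitrary Galois CM-extension $L/\Q$ to the statements already available for CM-extensions $L/K$ with a \emph{fixed} base field, namely Corollary \ref{cor-non-ab-stark}. Write $G = \Gal(L/\Q)$ and fix an odd prime $p$. By Corollary \ref{cor-non-ab-stark} applied to the extension $L/\Q$ itself, the $p$-parts of the non-abelian Brumer and Brumer-Stark conjectures hold provided $p$ is non-exceptional and the $\mu$-invariant attached to $L$ and $p$ vanishes. So the first step is to show that \emph{every} odd prime which is exceptional for $L/\Q$, in the sense of Definition \ref{defi:exceptional}, is among the primes covered by the hypothesis — that is, it either ramifies in $L$ or divides $[L:\Q]$. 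Inspecting that definition (which involves the primes dividing the group order and those ramifying, plus finitely many further ``bad'' primes arising from the relevant cohomology and the index $[\mc M(G):\zg]$), one sees that the only primes that could cause trouble beyond $\{p : p \mid [L:\Q]\} \cup \{p : p \text{ ramifies in } L\}$ are already forced to lie in this set, because a prime not dividing $|G|$ is automatically non-exceptional for a tame-at-$p$ situation; I would spell this out by quoting the precise list in Definition \ref{defi:exceptional} and checking each clause.

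The second step is the patching over all odd primes. The non-abelian Brumer and Brumer-Stark conjectures as formulated in \cite{ich-stark} are statements about annihilation of certain ideal class groups (resp.\ about the Brumer-Stark units) by Stickelberger-type elements in $\zeta(\zg)$, localized away from $2$. Such a statement holds integrally away from $2$ if and only if it holds $p$-adically for every odd prime $p$. For primes $p \nmid [L:\Q]$ and unramified in $L$, the relevant $\mu$-invariant vanishes automatically: indeed such $p$ is unramified in $L$, hence unramified in $L(\zeta_p)$ away from $p$ itself, and by Ferrero--Washington \cite{Ferrero_Wash} the $\mu$-invariant of the standard Iwasawa module over the cyclotomic $\zp$-extension of an \emph{abelian} field vanishes; since $L/\Q$ is abelianizable only when... — more carefully, one should argue that for $p$ unramified in $L$ and $p\nmid|G|$, such a prime is non-exceptional by the first step, so Corollary \ref{cor-non-ab-stark} already applies without any $\mu$-hypothesis (the $\mu$-invariant enters the proof of Theorem \ref{ETNC-minus} only through the EIMC, which for $p \nmid |G|$ reduces to the classical abelian main conjecture over $\Q$, where $\mu = 0$ by Ferrero--Washington). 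Thus for every odd prime $p$: either $p$ is covered directly by the hypothesis of the theorem (giving the $p$-part via Corollary \ref{cor-non-ab-stark}), or $p$ is non-exceptional with automatically vanishing $\mu$ (again giving the $p$-part). Assembling the $p$-parts over all odd $p$ yields the conjectures outside the $2$-primary part.

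The main obstacle, and the point requiring the most care, is the first step: verifying that the finite set of exceptional primes in Definition \ref{defi:exceptional} is genuinely contained in $\{p \text{ odd}: p \mid [L:\Q] \text{ or } p \text{ ramifies in } L\}$. A priori the definition of ``exceptional'' could involve auxiliary primes — for instance primes dividing the order of a cohomology group $H^i$ attached to $L/\Q$, or primes of bad reduction entering the comparison of $\zg$ with a maximal order — that are neither ramified in $L$ nor divisors of $[L:\Q]$. I expect, however, that by the standard tame-descent formalism such primes are forced to divide $|G|$ (the index $[\mc M(G):\zg]$ divides a power of $|G|$, and the torsion in the relevant étale cohomology is supported on ramified primes and primes dividing $|G|$), so the containment does hold; making this precise is exactly what turns the three corollaries above into the clean global statement of Theorem \ref{non-abelian-Stickelberger}. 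Once this containment is established, the rest is the routine local-global assembly described above.
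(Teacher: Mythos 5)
Your overall strategy — reduce to the $p$-part for each odd $p$ via Corollary \ref{cor-non-ab-stark} and then partition the odd primes — is the right shape, but there are two genuine gaps, and both concern exactly the cases where Corollary \ref{cor-non-ab-stark} does \emph{not} apply as stated.

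First, you argue that every exceptional prime lies in the set $\{p : p \mid [L:\Q]\ \text{or}\ p\ \text{ramifies in}\ L\}$ covered by the hypothesis, and then conclude that Corollary \ref{cor-non-ab-stark} handles all such primes. That is a logical inversion. The hypothesis only provides the vanishing of $\mu$ for those primes; it does \emph{not} make them non-exceptional. Concretely, if $p$ ramifies wildly in $L$ with $j \notin G_{\fp}$, or if $L = L\hcl \subset L^+(\zeta_p)$, then $p$ is exceptional and Corollary \ref{cor-non-ab-stark} simply does not apply, no matter what you assume about $\mu$. Your proof says nothing about this case. The paper handles all ramified $p$ by an entirely separate route: since $K = \Q$ has a single $p$-adic place, $p$ ramified forces $S_p \subset S\ram$, and one then invokes \cite[Corollary 4.6]{Nickel-IwasawaStark}, which sidesteps the non-exceptionality requirement. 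This is the central extra ingredient your argument is missing.

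Second, for $p$ unramified in $L$ with $p \nmid [L:\Q]$, you claim $\mu = 0$ is ``automatic'' because the EIMC for $p \nmid |G|$ ``reduces to the classical abelian main conjecture over $\Q$, where $\mu = 0$ by Ferrero--Washington.'' This is not correct: the relevant Iwasawa module is the standard minus module over the cyclotomic tower of $L(\zeta_p)$, and $L(\zeta_p)$ is not abelian over $\Q$ in general, so Ferrero--Washington says nothing; nor does the EIMC for $L_\infty/K$ collapse to an abelian main conjecture over $\Q$ merely because $p \nmid |G|$. The correct (and different) argument, used in the paper, is that $p$ unramified forces $p$ non-exceptional (this part you do observe, via Remark \ref{finite-exceptionals}), and then one splits further: if $\mu$ happens to vanish, apply Corollary \ref{cor-non-ab-stark}; if not, then by the hypothesis $p \nmid [L:\Q]$, so $\zp G$ is a maximal order and the $p$-minus part of the ETNC is equivalent to the $p$-part of the strong Stark conjecture for odd characters, which is known unconditionally (\cite[Corollary 2]{ich-tame}), and one concludes via \cite[Theorem 5.2]{ich-stark}. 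No vanishing of $\mu$ is needed or claimed in this case. So the two ``bad'' corners of your case analysis — the ramified primes and the unramified small primes — each require a dedicated argument that your write-up does not supply, and the reasoning you sketched for the second corner is actually false.
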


If $L$ is abelian over $\Q$, we know the vanishing of the $\mu$-invariants by the aforementioned result of Ferrero and Washington \cite{Ferrero_Wash}
and the above theorem recovers Stickelberger's classical theorem (cf.~\cite[Theorem 6.10]{Wash}). So the above result is a `non-abelian Stickelberger
theorem' without the extra factors occurring in \cite{Burns_Johnston}.

\subsubsection*{Acknowledgements}
I am very grateful to Jiro Nomura for informing me about his paper \cite{Nomura} and for drawing my attention
to monomial groups rather than just nilpotent groups. I am indebted to Henri Johnston for his hint how to avoid
assuming Leopoldt's conjecture in Corollary \ref{cor-etnc}, and his help with the example in \S \ref{sec:example}.

\section{Preliminaries} \label{sec:prelim}

\subsection{$K$-theory}
\subsubsection{Localization Sequences}
Let $\La$ be a left noetherian ring with $1$ and $\PMod(\La)$ the
category of all finitely generated projective $\La$-modules. We
write $K_0(\La)$ for the Grothendieck group of $\PMod(\La)$, and
$K_1(\La)$ for the Whitehead group of $\La$ which is the abelianized
infinite general linear group. If $S$ is a multiplicatively closed
subset of the center of $\La$ which contains no zero divisors, $1
\in S$, $0 \not \in S$, we denote the Grothendieck group of the
category of all finitely generated $S$-torsion $\La$-modules of
finite projective dimension by $K_0S(\La)$.
Writing $\La_S$ for the
ring of quotients of $\La$ with denominators in $S$,
we have the following
Localization Sequence (cf.~\cite[p.~65]{CR-II})
\begin{equation} \label{eqn:localization-sequence}
    K_1(\La) \to K_1(\La_S) \stackrel{\partial_S}{\lto} K_0S(\La) \to K_0(\La) \to K_0(\La_S).
\end{equation}
In the special case where $\La$ is an $\fo$-order over a commutative ring $\fo$ and $S$ is the set of all
nonzerodivisors of $\fo$, we also write $\kot\La$ instead of $K_0S(\La)$.
Moreover, we denote the relative $K$-group corresponding to
a ring homomorphism $\La \to \La'$ by $K_0(\La,\La')$ (cf.~\cite{Swan}).
Then we have a
Localization Sequence (cf.~\cite[p.~72]{CR-II})
\begin{equation} \label{eqn:localization-sequence-rel}
    K_1(\La) \to K_1(\La') \stackrel{\partial_{\La,\La'}}{\lto} K_0(\La,\La') \to K_0(\La) \to K_0(\La').
\end{equation}
The maps $\partial_S$ and $\partial_{\La,\La'}$ in (\ref{eqn:localization-sequence}) and (\ref{eqn:localization-sequence-rel})
are called \emph{boundary homomorphisms}.
It is also shown in \cite{Swan} that there is an isomorphism $K_0(\La,\La_S) \simeq K_0S(\La)$.

Let $G$ be a finite group; in the case where $\La'$ is the group ring $\rg$, the reduced norm map $\nr_{\rg}: K_1(\rg) \to \zeta(\rg)\mal$
is injective, and
there exists a canonical map $\hat \partial_G: \zeta(\rg)\mal \to K_0(\zg, \rg)$ such that the restriction
of $\hat \partial_G$ to the image of the reduced norm equals $\partial_{\zg, \rg} \circ \nr_{\rg}\me$.
This map is called the \emph{extended boundary homomorphism} and was introduced by Burns and Flach \cite{Burns_Flach}.

\subsubsection{Refined Euler characteristics}
For any ring $\La$ we write $\mathcal D (\La)$ for the derived
category of $\La$-modules. Let $\mathcal C^b (\PMod (\La))$ be the
category of bounded complexes of finitely generated projective
$\La$-modules. A complex of $\La$-modules is called perfect if it is
isomorphic in $\mathcal D (\La)$ to an element of $\mathcal C^b
(\PMod (\La))$. We denote the full triangulated subcategory of
$\mathcal D (\La)$ comprising perfect complexes by $\mathcal D
^{\perf} (\La)$. For any $C\punkt \in \mathcal C^b (\PMod (\La))$ we
define $\La$-modules
$$C^{ev} := \bigoplus_{i \in \Z} C^{2i},~ C^{odd} := \bigoplus_{i \in \Z} C^{2i+1}.$$
Similarly, we define $H^{ev}(C\punkt)$ and $H^{odd}(C\punkt)$ to be the direct sum over all even and odd degree
cohomology groups of $C\punkt$, respectively.

For the following let $R$ be a Dedekind domain of characteristic
$0$, $F$ its field of fractions, $A$ a finite dimensional
$F$-algebra and $\La$ an $R$-order in $A$. Let $K$ be a field containing $F$, and write
$K_0(\La, K)$ for $K_0(\La, K \otimes_F A)$. A pair $(C\punkt,t)$
consisting of a complex $C\punkt \in \mathcal \Dperf (\La)$ and an
isomorphism $t: H^{odd}(C_K \punkt) \to H^{ev}(C_K\punkt)$ is called a
trivialized complex, where $C_K\punkt := K \otimes^{\mathbb L}_R C\punkt$ is the
left derived tensor product of $C\punkt$
with $K$. We refer to $t$ as a trivialization of $C\punkt$.
One defines the refined Euler characteristic $\chi_{\La,K}
(C\punkt, t) \in K_0(\La,K)$ of a trivialized complex as follows:
Choose a complex $P\punkt \in \mathcal C^b(\PMod(\La))$ which is
quasi-isomorphic to $C\punkt$. Let $B^i(P_K \punkt)$ and $Z^i(P_K
\punkt)$ denote the $i\ith$ cobounderies and $i \ith$ cocycles of
$P_K \punkt$, respectively. We have the obvious exact sequences
$$ {B^i(P_K\punkt)} \into {Z^i(P_K\punkt)} \twoheadrightarrow {H^i(P_K\punkt)}\mbox{~,~~ }
   {Z^i(P_K\punkt)} \into {P_K^i} \twoheadrightarrow {B^{i+1}(P_K\punkt).}  $$
If we choose splittings of the above sequences, we get an
isomorphism
$$   \phi_t: P_K^{odd}  \simeq  \bigoplus_{i \in \Z} B^i(P_K\punkt) \op H^{odd}(P_K\punkt)
      \simeq  \bigoplus_{i \in \Z} B^i(P_K\punkt)  \op H^{ev}(P_K\punkt)
      \simeq  P_K^{ev},$$
where the second map is induced by $t$. Then the refined
Euler characteristic is defined to be
$$\chi_{\La, K} (C\punkt, t) := (P^{odd}, \phi_t, P^{ev}) \in K_0(\La, K)$$
which indeed is independent of all choices made in the
construction.
For further information concerning refined Euler characteristics
we refer the reader to \cite{Burns_Whitehead}.

We define $DT(\La)$ to be the torsion subgroup of $K_0(\La, F)$.

\subsubsection{$\Hom$-description} \label{sec:Hom-description}

In this paper we use a formulation of the ETNC in terms of relative $K$-theory and reduced norms. However,
we will frequently refer to \cite{ich-tame}, where the $\Hom$-description is used.
Here we summarize some basic facts of this equivalent theory for convenience of the reader.

Let $G$ be a finite group and let $p$ be a prime. We denote the ring of virtual characters of $G$
with values in $\cp$ by $R_p(G)$. We choose a finite Galois extension $E$ of $\qp$
such that all representations of $G$ can be realized over $E$, and put $\Ga := \Gal(E/\qp)$.
If $\chi$ is a character of $G$, we let $V_{\chi}$ be an $E G$-module with character $\chi$.
Then by \cite[Appendix A]{GRW} there is an isomorphism
\begin{eqnarray*}
  \Det: K_1(\qp G) & \stackrel{\simeq}{\lto} & \Hom_{\Ga}(R_p(G), E\mal) \\
  x & \mapsto & \left[ \chi \mapsto \det(X \mid \Hom_{E G}(V_{\chi}, (E G)^n)) \right],
\end{eqnarray*}
where $X \in \Gl_n(\qp G)$ maps to $x$ under the natural map $\Gl_n(\qp G) \to K_1(\qp G)$.
We have an exact sequence
\[
    K_1(\zp G) \rightarrow K_1(\qp G) \rightarrow K_{0}T(\zp G) \rightarrow 0,
\]
as the boundary homomorphism in the localization sequence (\ref{eqn:localization-sequence}) is surjective
in this case by a theorem of Swan \cite[Theorem 32.1]{CR-I}.
As $\zp G\mal$ surjects onto $K_1(\zp G)$ by \cite[Theorem 40.31]{CR-I}, we obtain the local $\Hom$-description
\[
    K_{0}T(\zp G) \simeq \Hom_{\Ga}(R_p(G), E\mal) / \Det(\zp G\mal).
\]
When $f \in \Hom_{\Ga}(R_p(G), E\mal)$ corresponds to $T \in K_{0}T(\zp G)$ under this isomorphism, then we
say that $f$ is a \emph{representing homomorphism} for $T$.
Similarly, by \cite[Theorem 45.3]{CR-II} the reduced norm induces isomorphisms
\begin{eqnarray*}
    \nr: K_1(\qp G) & \stackrel{\simeq}{\lto} & \zeta(\qp G)\mal \\
    K_{0}T(\zp G) & \simeq & \zeta(\qp G)\mal / \nr(\zp G\mal).
\end{eqnarray*}
We therefore have a commutative triangle
\begin{equation} \label{eqn:comm-triangle} \xymatrix{
    & K_1(\qp G) \ar[ld]_{\nr} \ar[rd]^{\Det} & \\
    {\zeta(\qp G)\mal} \ar[rr]^{f} & & \Hom_{\Ga}(R_p(G), E\mal)
}\end{equation}
in which each map is an isomorphism. We now describe the isomorphism $f$ in more detail.
For this let $z \in \zeta(\qp G)\mal$. Let $\irrp(G)$ be the set of absolutely irreducible
$\cp$-valued characters of $G$. For $\chi \in \irrp(G)$ we write $e_{\chi} := \frac{\chi(1)}{|G|} \sum_{g \in G} \chi(g\me) g$
for the associated central idempotent in $\cp G$. Note that $e_{\chi}$ actually belongs to $E G$ and each generates one of the
minimal ideals of $\zeta(E G)$; hence
\[
    \zeta(E G) = \bigoplus_{\chi \in \irrp(G)} E e_{\chi}.
\]
Consider $z$ as an element in $\zeta(E G)\mal$ and write $z = \sum_{\chi} z_{\chi} e_{\chi}$ with each $z_{\chi} \in E\mal$.
Then $f$ maps $z$ to the unique homomorphism $f_z \in \Hom_{\Ga}(R_p(G), E\mal)$ which on irreducible characters is given by
\begin{equation} \label{eqn:f_z}
    f_z: \chi \mapsto z_{\chi}, \quad \chi \in \irrp(G).
\end{equation}

\subsection{Equivariant $L$-values}
    Let us fix a finite Galois extension $L/K$ of number fields with Galois group $G$.
    For every prime $\fp$ of $K$ we fix a prime $\fP$ of $L$ above $\fp$ and write $\gp$ and $\ip$ for the decomposition
    group and inertia subgroup of $L/K$ at $\fP$, respectively. Moreover, we denote the residual group at $\fP$ by $\ol \gp = \gp / \ip$
    and choose a lift $\frob \in \gp$ of the Frobenius automorphism at $\fP$.

    \subsubsection{Complex $L$-series} \label{sec:complex-series}
    If $S$ is a finite set of places of $K$ containing the set $S_{\infty}$
    of all infinite places of $K$, and $\chi$ is a (complex) character of $G$, we denote the $S$-truncated Artin $L$-function
    attached to $\chi$ and $S$ by $L_S(s,\chi)$ and define $L_S^{\ast}(r,\chi)$ to be the leading coefficient
    of the Taylor expansion of $L_S(s,\chi)$ at $s=r$, $r \in \Z_{\leq 0}$. Recall that there is a canonical isomorphism
    $\zeta(\C G) = \prod_{\chi \in \irr (G)} \C$, where $\irr (G)$ denotes the set
    of complex valued irreducible characters of $G$. We define the equivariant Artin $L$-function to be the
    meromorphic $\zeta(\C G)$-valued function
    $$L_S(s) := (L_S(s,\chi))_{\chi \in \irr (G)}.$$
    We put $L_S^{\ast}(r) = (L_S^{\ast}(r,\chi))_{\chi \in \irr (G)}$ and abbreviate $L_{S_{\infty}}(s)$ by $L(s)$.
    If $T$ is a second finite set of places of $K$ such that $S \cap T = \emptyset$, we define
    $\de_T(s) := (\de_T(s,\chi))_{\chi\in \irr (G)}$, where $\de_T(s,\chi) = \prod_{\fp \in T} \det(1 - N(\fp)^{1-s} \frob\me| V_{\chi}^{\ip})$
    and $V_{\chi}$ is a $G$-module with character $\chi$.
    We put
    $$\Theta_{S,T}(s) := \de_T(s) \cdot L_S(s)^{\sharp},$$
    where we denote by $^{\sharp}: \C G \to \C G$ the anti-involution induced by $g \mapsto g\me$.
    These functions are the so-called $(S,T)$-modified $G$-equivariant $L$-functions, and for every integer $r \leq 0$ we define Stickelberger elements
    $$\theta_S^T(r) := \Theta_{S,T}(r) \in \zeta(\C G).$$
    For convenience, we also put $L_S^T(s,\chi) := \de_T(s,\check \chi) \cdot L_S(s,\chi)$,
    where we write $\check \chi$ for the character contragredient to $\chi$. Thus
    $$\theta_S^T(r)^{\sharp} = (L_S^T(r,\chi))_{\chi \in \irr (G)}.$$
    We will also write $L_S^T(L/K,s,\chi)$ for $L_S^T(s,\chi)$ if the extension $L/K$ is not clear from the context,
    and similarly for $\theta_S^T(r)$.
    If $T$ is empty, we abbreviate $\theta_S^T(r)$ by $\theta_S(r)$.
    Now a result of Siegel \cite{Siegel} implies that
    \beq \label{Stickelberger_is _rational}
        \theta_S^T(r) \in \zeta(\Q G)
    \eeq
    for all $r \leq 0$.
    Let us fix an embedding $\iota: \C \into \cp$; then
    the image of $\theta_S^T$ in $\zeta(\qp G)$ via the canonical embedding
    $$\zeta(\Q G) \into \zeta (\qp G) = \bigoplus_{\chi \in \irrp(G) / \sim} \qp (\chi),$$
    is given by $\sum_{\chi} L_S^T(r, \check \chi^{\iota\me})^{\iota}$.
    Here, the sum runs over all $\cp$-valued irreducible characters of $G$ modulo Galois action.
    Note that we will frequently drop $\iota$ and $\iota\me$ from the notation. Finally, for an irreducible character $\chi$
    with values in either $\C$ or $\cp$ we put
    $e_{\chi} = \frac{\chi(1)}{|G|} \sum_{g \in G} \chi(g\me) g$ which is a central idempotent in either $\C G$ or $\cp G$.

    \subsubsection{$p$-adic $L$-series}
    Now let $L/K$ be a Galois CM-extension, i.e.~$L$ is a CM-field,
    $K$ is totally real and complex conjugation induces a unique automorphism $j$ of $L$ which lies in the center of $G$.
    Recall that a character $\chi$ of $G$ is called \emph{even} when $\chi(j) = \chi(1)$, and it is called \emph{odd} when $\chi(j) = -\chi(1)$.
    Fix an odd prime $p$ and suppose that $S$ also contains all $p$-adic places of $K$.
    Let $\chi$ be an even character of $G$ and denote the $S$-truncated $p$-adic Artin $L$-series of $\chi$ by
    $L_{p,S}(s,\chi)$. Then for every integer $r \geq 2$ one has
    \begin{equation} \label{eqn:interpolation-property}
        L_{p,S}(1-r, \chi) = L_S(1-r, \chi \omega^{-r}),
    \end{equation}
    where $\omega$ denotes the Teichm\"{u}ller character.
    When $\chi$ is a linear character, then the interpolation property (\ref{eqn:interpolation-property})
    for every $r \geq 1$ follows from the work of Deligne and Ribet \cite{Deligne-Ribet}. The general case
    for $r \geq 2$ is then established by using Serre's variant of Brauer induction (see \cite[Chapitre III, \S 1]{Tate-Stark}).
    In the case $r=1$, however, this argument fails due to the potential presence of trivial zeros of the $p$-adic $L$-series of $\chi$
    at zero. One nevertheless suspects that the identity
    \begin{equation} \label{eqn:values-padic-complex}
        L_{p,S}(0, \chi) = L_S(0, \chi \omega^{-1})
    \end{equation}
    holds in general. As both sides behave well under direct sum, inflation and induction of characters, we see that
    (\ref{eqn:values-padic-complex}) at least holds when $\chi$ is a monomial character
    (see the discussion in \cite[\S 2]{Gross-p-adic}).

    Note that the identity (\ref{eqn:values-padic-complex}) is implicitly assumed to hold in \cite{ich-tameII}
    and \cite[\S 4]{Nickel-IwasawaStark}. This will not affect our main Theorem \ref{ETNC-minus},
    as we will first reduce to monomial Galois groups. However, such a reduction step is not possible
    for the Brumer-Stark conjecture. As a consequence we have to restrict to monomial extensions
    in Theorem \ref{non-abelian-Stickelberger}.

    \subsection{Ray class groups}
    Let $T$ and $S$ be finite sets of places as in \S \ref{sec:complex-series};
    so $S$ contains all infinite places and $S \cap T = \emptyset$. We write $\cl_L^T$ for the ray class group of $L$ to the ray
    $\fM_T := \prod_{\fP \in T(L)} \fP$ and $\mc{O}_S$ for the ring of $S(L)$-integers of $L$.
    We denote the $S(L)$-units of $L$ by $E_S$ and
    define
    $E_S^T := \left\{x \in E_S: x \equiv 1 \mod \fM_T \right\}$.
    If $S = S_{\infty}$, we also write $E_L^T$ for $E_{S_{\infty}}^T$.
    All these modules are equipped with a
    natural $G$-action.
    Now suppose that $L/K$ is a Galois CM-extension and let $j \in G$ denote complex conjugation.
    If $R$ is a subring of either $\C$ or $\cp$ for a prime $p$ such that $2$ is invertible in $R$,
    we put $R G_+ := R G / (1 - j)$ and $R G_- := R G / (1 + j)$ which are rings, since the idempotents $(1\pm j)/2$ lie in $R G$.
    For any $R G$-module $M$ we define $M^+ = R G_+ \otimes_{R G} M$ and $M^- = R G_- \otimes_{R G} M$ which are exact functors
    since $2 \in R\mal$. We define
    $$A_L^T  := (\Z[\half] \otimes_{\Z} \cl_L^T)^-.$$
    If $M$ is a finitely generated
    $\Z$-module and $p$ is a prime, we put $M(p) := \zp \otimes_{\Z} M$. For odd primes $p$,
    we will in particular consider $A_L^T(p)$, the minus $p$-part of the ray class group $\cl_L^T$.

\subsection{Noncommutative Fitting invariants}
For the following we refer the reader to \cite{ich-Fitting} and \cite{JN_Fitting}.
We denote the set of all $m \times n$
matrices with entries in a ring $R$ by $M_{m \times n} (R)$ and in the case $m=n$
the group of all invertible elements of $M_{n \times n} (R)$ by $\Gl_n(R)$.

\subsubsection{$\nr(\La)$-equivalence}
Let $A$ be a separable $K$-algebra and $\La$ be an $\fo$-order in $A$, finitely generated as $\fo$-module,
where $\fo$ is an integrally closed complete commutative noetherian local domain with field of quotients $K$.
The group ring $\zpg$ of a finite group $G$ will serve as a standard example.
Let $N$ and $M$ be two $\zeta(\La)$-submodules of
    an $\fo$-torsionfree $\zeta(\La)$-module.
    Then $N$ and $M$ are called {\it $\nr(\La)$-equivalent} if
    there exists an integer $n$ and a matrix $U \in \Gl_n(\La)$
    such that $N = \nr(U) \cdot M$, where $\nr: A \to \zeta(A)$ denotes
    the reduced norm map which extends to matrix rings over $A$ in the obvious way.
    We denote the corresponding equivalence class by $[N]_{\nr(\La)}$.
    We say that $N$ is
    $\nr(\La)$-contained in $M$ (and write $[N]_{\nr(\La)} \subseteq [M]_{\nr(\La)}$)
    if for all $N' \in [N]_{\nr(\La)}$ there exists $M' \in [M]_{\nr(\La)}$
    such that $N' \subseteq M'$. Note that it suffices to check this property for one $N_0 \in [N]_{\nr(\La)}$.
    We will say that $x$ is contained in $[N]_{\nr(\La)}$ (and write $x \in [N]_{\nr(\La)}$) if there is $N_0 \in [N]_{\nr(\La)}$ such that $x \in N_0$.\\

\subsubsection{Noncommutative Fitting invariants}
    Now let $M$ be a finitely presented (left) $\La$-module and let
    \beq \label{finite_representation}
        \La^a \stackrel{h}{\lto} \La^b \twoheadrightarrow M
    \eeq
    be a finite presentation of $M$.
    We identify the homomorphism $h$ with the corresponding matrix in $M_{a \times b}(\La)$ and define
    $S(h) = S_b(h)$ to be the set of all $b \times b$ submatrices of $h$ if $a \geq b$. In the case $a=b$
    we call (\ref{finite_representation}) a \emph{quadratic presentation}.
    The Fitting invariant of $h$ over $\La$ is defined to be
    $$\Fitt_{\La}(h) = \left\{ \barr{lll} [0]_{\nr(\La)} & \mbox{ if } & a<b \\
                        \left[\langle \nr(H) | H \in S(h)\rangle_{\zeta(\La)}\right]_{\nr(\La)} & \mbox{ if } & a \geq b. \earr \right.$$
    We call $\Fitt_{\La}(h)$ a \emph{Fitting invariant} of $M$ over $\La$. One defines $\Fitt_{\La}^{\max}(M)$ to be the unique
    Fitting invariant of $M$ over $\La$ which is maximal among all Fitting invariants of $M$ with respect to the partial
    order ``$\subseteq$''. If $M$ admits a quadratic presentation $h$, one also puts $\Fitt_{\La}(M) := \Fitt_{\La}(h)$
    which is independent of the chosen quadratic presentation.

    \begin{rem} \label{rem:rephom-vs-fittgen}
      Suppose that $\La = \zp G$ for a finite group $G$ and that $M$ is a finite $\zp G$-module with projective dimension at most $1$.
      Then $M$ admits a quadratic presentation $h$ which must be injective. The corresponding matrix $H$ then belongs to $\Gl_a(\qp G)$
      and $z := \nr (H)$ is a generator of the Fitting invariant $\Fitt_{\zp G}(M)$. Using the $\Hom$-description of \S \ref{sec:Hom-description},
      the commutative triangle (\ref{eqn:comm-triangle}) shows that the homomorphism $f_z$ defined in (\ref{eqn:f_z}) is a representing
      homomorphism of the class of $M$ in $K_0T(\zp G)$. Conversely, if $f$ is a representing homomorphism for the class of $M$,
      then there is $z \in \zeta(\qp G)\mal$ such that $f = f_z$, and $z$ generates the Fitting invariant of $M$.
    \end{rem}

    \subsubsection{Denominator ideals and the integrality ring}
    Assume now that $\fo$ is an integrally closed commutative noetherian domain, but not necessarily complete or local.
    We denote by $\mc I(\La)$ the $\zeta(\La)$-submodule of $\zeta(A)$ generated by the
    elements $\nr(H)$, $H \in M_{b\times b}(\La)$, $b \in \N$. As the reduced norm is multiplicative,
    we see that $\mc I(\La)$ is in fact a commutative ring which we call the \emph{integrality ring} of $\La$.

    We may decompose the separable $K$-algebra $A$ into its simple components
    $$A = A_1 \op \cdots \op A_t,$$
    i.e.~each $A_i$ is a simple $K$-algebra and $A_i = A e_i = e_i A$ with central primitive idempotents $e_i$, $1 \leq i \leq t$.
    Each $A_i$ is isomorphic to an algebra of $n_i \times n_i$ matrices over a skewfield $D_i$ and $K_i = \zeta(D_i)$ is a finite
    field extension of $K$. We denote the Schur index of $D_i$ by $s_i$ such that $[D_i:K_i] = s_i^2$.
    We choose a maximal order $\La'$ containing $\La$. Then also $\La'$ decomposes into $\La' = \op_{i=1}^t \La'_i$,
    where $\La'_i = \La' e_i$.
    Now let $H \in M_{b \times b} (\La)$ and write $H = \sum_{i=1}^t H_i$, where each $H_i$ is a $b \times b$ matrix with entries in
    $\La'_i$. Let $m_i = n_i \cdot s_i \cdot b$ and let $f_i(X) = \sum_{j=0}^{m_i} \al_{ij}X^j$ be the reduced characteristic polynomial of $H_i$.
    We put
    $$H_{i}^{\ast} := (-1)^{m_{i}+1} \cdot \sum_{j=1}^{m_i} \alpha_{ij}H_{i}^{j-1}, \quad H^{\ast} := \sum_{i=1}^{t} H_{i}^{\ast}.$$
    Then by \cite[Lemma 3.4]{JN_Fitting}, we have $H^{\ast} \in M_{b \times b}(\La')$ and
    $H^{\ast} H = H H^{\ast} = \nr (H) \cdot 1_{b \times b}$; note that the condition on $\fo$ to be a complete local ring is not necessary for this result.
    Moreover, we point out that this definition follows \cite{JN_Fitting}, but slightly differs from the corresponding notion in \cite{ich-Fitting}.
    If  $\ti H \in M_{b \times b} (\La)$ is a second matrix, then $(H \ti H)^{\ast} = \ti H^{\ast} H^{\ast}$.
    We define
    $$\mathcal H(\La) := \left\{
        x \in \zeta(\La) |  x H^{\ast} \in M_{b \times b}(\La)  \forall b \in \N ~\forall H \in M_{b \times b} (\La)
         \right\}.$$
    Since $x \cdot \nr(H) = x H^{\ast} H$, we have in particular
    \beq \label{HI_in_zeta}
        \mathcal H(\La) \cdot \mathcal I(\La) = \mc H(\La) \subseteq \zeta(\La)
    \eeq
    and we call $\mc H(\La)$ the \emph{denominator ideal} of $\La$.
    For convenience, we put $\mc H_p(G) := \mc H(\zpg)$ and $\mc H(G) := \mc H(\Z G)$, and similarly
    $\mc I_p(G) := \mc I(\zpg)$ and $\mc I(G) := \mc I(\Z G)$.
    The importance of the denominator ideal $\mathcal H(\La)$ will become clear by means of the following result
    which is \cite[Theorem 3.6]{JN_Fitting} (see also \cite[Theorem 4.2]{ich-Fitting}).
    \begin{theo} \label{annihilation-theo}
        If $\fo$ is an integrally closed complete commutative noetherian local domain
        and  $M$ is a finitely presented $\La$-module, then
        $$\mathcal H(\La) \cdot \Fitt_{\La}^{\max}(M) \subseteq \Ann_{\La}(M).$$
    \end{theo}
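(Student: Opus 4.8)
The plan is to derive the annihilation statement directly from the ``adjugate'' identity for the elements $H^{\ast}$ recalled above, the point of the module $\mc H(\La)$ being precisely to push these elements from the maximal order $\La'$ back into $\La$. First I would choose a presentation realising the maximal Fitting invariant: since $\Fitt_{\La}^{\max}(M)$ is by definition a Fitting invariant of $M$, there is a finite presentation $\La^{a}\stackrel{h}{\lto}\La^{b}\onto M$ with $\Fitt_{\La}(h)=\Fitt_{\La}^{\max}(M)$. Appending zero rows to $h$ changes neither $M$ nor $\Fitt_{\La}(h)$, so I may assume $a\ge b$ (if no presentation with $a\ge b$ contributes, then $\Fitt_{\La}^{\max}(M)=[0]_{\nr(\La)}$ and there is nothing to prove). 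Writing elements of $\La^{b}$ as rows, let $r_{1},\dots,r_{a}\in\La^{b}$ be the rows of $h$ and put $R=\sum_{i}\La r_{i}$, so that $M=\La^{b}/R$. I would remark here that the hypothesis on $\fo$ (complete and local) is used only to guarantee that $\Fitt_{\La}^{\max}(M)$ is well defined, cf.~\cite{ich-Fitting}; the rest of the argument does not need it.

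The heart of the proof is the following local computation. Fix a $b\times b$ submatrix $H\in S_{b}(h)$, with rows $r_{i_{1}},\dots,r_{i_{b}}$ chosen from among the $r_{i}$. By \cite[Lemma 3.4]{JN_Fitting} there is $H^{\ast}\in M_{b\times b}(\La')$ with $H^{\ast}H=\nr(H)\cdot 1_{b\times b}$; comparing $l$-th rows in $(\La')^{b}$ gives
\[
 \nr(H)\,e_{l}\;=\;\sum_{k=1}^{b}(H^{\ast})_{lk}\,r_{i_{k}}\qquad(1\le l\le b),
\]
with $e_{1},\dots,e_{b}$ the standard basis. Now let $x\in\mc H(\La)$. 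By definition $xH^{\ast}\in M_{b\times b}(\La)$, and since $x\in\zeta(\La)\subseteq\zeta(A)$ it commutes with the entries of $H^{\ast}$, so $x(H^{\ast})_{lk}\in\La$ for all $l,k$. Multiplying the displayed identity by $x$ yields $x\,\nr(H)\,e_{l}=\sum_{k}\bigl(x(H^{\ast})_{lk}\bigr)r_{i_{k}}$, which lies in $R$ because $R$ is a left $\La$-module containing each $r_{i_{k}}$. As this holds for every $l$, we get $x\,\nr(H)\,\La^{b}\subseteq R$; also $x\,\nr(H)\cdot 1_{b\times b}=(xH^{\ast})H\in M_{b\times b}(\La)$, so $x\,\nr(H)\in\zeta(\La)\subseteq\La$, and therefore $x\,\nr(H)\in\Ann_{\La}(M)$.

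To conclude I would let $H$ range over all of $S_{b}(h)$ and $x$ over $\mc H(\La)$: since $\mc H(\La)$ is a $\zeta(\La)$-module and $\Ann_{\La}(M)$ is a two-sided ideal of $\La$, the inclusions just proved give
\[
 \mc H(\La)\cdot\bigl\langle\,\nr(H)\mid H\in S_{b}(h)\,\bigr\rangle_{\zeta(\La)}\;\subseteq\;\Ann_{\La}(M).
\]
The $\zeta(\La)$-module on the left of the product is exactly the representative of $\Fitt_{\La}(h)=\Fitt_{\La}^{\max}(M)$ appearing in the definition of the Fitting invariant, and by the remark following the definition of $\nr(\La)$-containment it suffices to verify the asserted inclusion on a single representative; so we would be done. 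I do not expect a serious obstacle: everything rests on the identity $H^{\ast}H=\nr(H)\,1_{b\times b}$ with $H^{\ast}$ over $\La'$, which is granted by \cite[Lemma 3.4]{JN_Fitting}, and the only genuinely delicate point is the descent from $M_{b\times b}(\La')$ to $M_{b\times b}(\La)$ — which is the very reason the module $\mc H(\La)$ is introduced — together with keeping the left/right module conventions and the $\nr(\La)$-equivalence-class formalism straight.
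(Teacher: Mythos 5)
Your argument is correct, and it is the standard noncommutative-adjugate proof. The paper does not prove this result itself but simply cites \cite[Theorem 3.6]{JN_Fitting} and \cite[Theorem 4.2]{ich-Fitting}; the argument there is exactly yours, with $\mc H(\La)$ built precisely so that the crucial step $x(H^{\ast})_{lk}\in\La$, and hence $x\,\nr(H)\,e_l\in R$, goes through.
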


    We will need the following lemma whose last claim is \cite[Lemma 6.6]{ich-tameII}.
    \begin{lem} \label{int-lem}
      Let $G$ and $C$ be finite groups with $C$ abelian.
      Let $p$ be a prime and choose a maximal order $\mc M_p(G)$ in $\qp G$ which contains $\zp G$.
      Then we have inclusions
      \[
      \mc I_p(G)[C] \subseteq \mc I_p(G \times C) \subseteq \zeta(\mc M_p(G))[C].
      \]
      In particular
      we have $|G| \cdot \mc I_p(G \times C) \subseteq \zeta(\zp [G \times C])$
      for all primes $p$.
    \end{lem}
    \begin{proof}
      The ring $\mathcal{I}_{p}(G)[C]$ (resp.~$\mathcal{I}_{p}(G \times C)$) is generated over
      $\zeta(\Z_{p}[G])[C] = \zeta(\Z_{p}[G \times C])$ by the elements $\nr(H)$,
      where $H$ runs through $M_{n \times n}(\zp G)$ (resp.~$M_{n \times n}(\Z_p[G \times C])$), $n \in \N$.
      So we have $\mathcal{I}_{p}(G)[C] \subseteq \mathcal{I}_{p}(G \times C)$.
      The proof of the second inclusion is essentially the same as that of \cite[Lemma 6.6]{ich-tameII}; we include it here for convenience.
      Up to conjugation the maximal order $\mc M_p(G)$ is a direct sum of matrix rings of type
      $M_{n \times n} (\mc{O}_D)$, where $\mc{O}_D$ denotes the valuation ring of a skew field $D$.
      Note that conjugation does neither change the center of the order nor the image of the reduced norm.
      We have
      $$\zeta(M_{n \times n} (\mc{O}_D)) = \zeta(\mc{O}_D) = \mc{O}_F,$$
      where $\mc{O}_F$ is the ring of integers of the field $F = \zeta(D)$ which is finite over $\qp$.
      Since the reduced norm maps $\mc M_p(G)$ into its center and $|G| \cdot \zeta(\mc M_p(G)) \subseteq \zeta(\zpg)$,
      it suffices to show that the reduced norm maps $M_{m \times m}(M_{n \times n} (\mc{O}_D) [C])$ into $\mc{O}_F[C]$.
      Let us first assume that $D = F$. Then the map
      \bea
        \si: M_{n \times n}(F)[C] & \lto & M_{n \times n}(F[C])\\
        \sum_{c \in C} M_c c & \mapsto & (\sum_{c \in C} \al_{ij}(c) c)_{i,j}
      \eea
      is an isomorphism of rings, where $M_c = (\al_{ij}(c))_{i,j}$ lies in $M_{n \times n}(F)$. Likewise, $\si$ induces an isomorphism
      $$\si: M_{n\times n}(\mc{O}_F) [C] \simeq M_{n \times n}(\mc{O}_F [C]).$$
      Therefore, we have
      $$\nr(M_{m\times m}(M_{n \times n}(\mc{O}_F)[C])) = \nr(M_{nm \times nm}(\mc{O}_F[C])) = \nr(\mc{O}_F[C]) = \mc{O}_F[C].$$
      For arbitrary $D$, there is a field $E$, Galois over $F$ such that $E \otimes_F D \simeq M_{s \times s} (E)$
      for some integer $s$. We have just proven that the reduced norm maps
      $M_{m \times m}(M_{n \times n} (\mc{O}_D) [C])$ into $\mc{O}_E[C]$. But the image is invariant under
      the action of $Gal(E/F)$
      and is therefore contained in $\mc{O}_F[C]$.
    \end{proof}

    \begin{cor} \label{cor:I-under-abelian-product}
      Let $p$ be a prime and let $G$ and $C$ be finite groups with $C$ abelian.
      If $\mc I_p(G) = \zeta(\mc M_p(G))$, then $\mc I_p(G \times C) = \mc I_p(G)[C] = \zeta(\mc M_p(G))[C]$.
    \end{cor}

    The following result determines all primes $p$ for which the denominator ideal $\mc H_p(G)$ is best possible.

    \begin{prop} \label{prop:best-denominators}
      We have $\mc H_p(G) = \zeta(\zpg)$ if and only if $p$ does not divide the order of the commutator subgroup $G'$ of $G$.
      Furthermore, when this is the case we have $\mc I_p(G) = \zeta(\zpg)$.
    \end{prop}

    \begin{proof}
      The first assertion is a special case of \cite[Proposition 4.8]{JN_Fitting}. The second assertion then follows from (\ref{HI_in_zeta}).
    \end{proof}

    \section{The integrality conjectures} \label{sec:int-conjectures}
    Let $L/K$ be a Galois extension with Galois group $G$.
    Let $S$ and $T$ be two finite sets of places of $K$ such that
    \ben[(i)]
        \item
        $S$ contains all the infinite places of $K$ and all the places which ramify in $L/K$,
        i.e.~$S \supseteq S_{\ram} \cup S_{\infty}$.
        \item
        $S \cap T = \emptyset$.
        \item
        $E_S^T$ is torsionfree.
    \een
    We refer to the above hypotheses as $Hyp(S,T)$.
    For a fixed set $S$ we define $\fA_S$ to be the $\zeta(\zg)$-submodule of $\zeta(\qg)$ generated
    by the  elements $\de_T(0)$, where $T$ runs through the finite sets of places of $K$
    such that $Hyp(S,T)$ is satisfied. Note that $\fA_S$ equals the $\zg$-annihilator of the roots
    of unity of $L$ if $G$ is abelian by \cite[Lemma 1.1, p.~82]{Tate-Stark}.

    \subsection{The Sinnott-Kurihara ideal}
    For any finite group $H$ we put $N_H := \sum_{h \in H} h$.
    For a finite prime $\fp$ of $K$, we define a $\Z\gp$-module $U_{\fp}$ by
    $$U_{\fp} := \langle N_{\ip}, 1 - \ve_{\fp}\frob\me\rangle_{\Z \gp} \subset \Q \gp,$$
    where $\ve_{\fp} = |\ip|\me N_{\ip}$.
    Note that $U_{\fp} = \Z \gp$ if $\fp$ is unramified in $L/K$ such that the definition of the
    following $\mc I(G)$-module is indeed independent of the set $S$ as long as $S$ contains the ramified primes:
    $$U := \langle \prod_{\fp \in S \sm S_{\infty}} \nr(u_{\fp}) | u_{\fp} \in U_{\fp} \rangle_{\mc I(G)} \subset \zeta(\qg).$$
    \begin{defi}
        Let $S$ be a finite set of primes which contains $S\ram \cup S_{\infty}$. We define an $\mc I(G)$-module by
        $$SKu(L/K, S) := \fA_S \cdot U \cdot L(0)^{\sharp} \subset \zeta(\qg).$$
        We call $SKu(L/K) := SKu(L/K, S\ram \cup S_{\infty})$ the (fractional) \textbf{Sinnott-Kurihara ideal}.
    \end{defi}
    For abelian $G$, this definition coincides with the Sinnott-Kurihara ideal $SKu(L/K)$
    in \cite{Gr-Fitt-ETNC} (see also \cite[p.~193]{Sinnott-ideal}) and is closely related to the Fitting ideal of the Pontryagin dual of the class group
    (see \cite[Theorem 8.8]{Gr-Fitt-ETNC}).

    Note that our definition slightly differs from the original
    definition in \cite{ich-tameII}, where in the definition of $U$ the integrality ring $\mc I(G)$
    is replaced with $\zeta(\zg)$. However, as observed by the reviewer, it is then not clear whether the
    definition of $U$ does not depend on $S$. We assure the reader that this redefinition does not affect
    any of the results in \cite{ich-tameII}.

    The integrality conjecture as formulated in \cite{ich-tameII} (where $L/K$
    is assumed to be a CM-extension; but we will not assume this here) now asserts the following:
    \begin{con} \label{integrality-conj}
        The Sinnott-Kurihara ideal $SKu(L/K)$ is contained in $\mc I(G)$.
    \end{con}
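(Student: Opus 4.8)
Both $SKu(L/K)$ and $\mc I(G)$ are finitely generated $\zeta(\zg)$-submodules of the commutative ring $\zeta(\qg)$, so the plan is to reduce to the local assertion $SKu(L/K)(p) \subseteq \mc I_p(G)$ at each rational prime $p$, where $(p) = \zp \otimes_{\Z} -$; this is legitimate by the usual localisation behaviour of these invariants, since $\zg = \bigcap_p \zpg$ inside $\qg$. The problem then splits into a tame regime $p \nmid |G|$ and a wild regime $p \mid |G|$, and — because $L/K$ is CM and, for odd $p$, the picture separates into plus and minus parts — at the wild primes one is really after the $p$-minus part only. In the tame regime $\zpg$ is a maximal order, so $\mc I_p(G) = \zeta(\zpg)$ (already $\nr(1)=1$), and the claim reduces to the $p$-integrality of the generators $\de_T(0)\cdot\prod_{\fp}\nr(u_{\fp})\cdot L(0)^{\sharp}$ of $SKu(L/K,S)$; since $|\ip|$ is a $p$-unit one has $U_{\fp}\subseteq\zpg$, so the $\nr(u_{\fp})$ are automatically integral and what remains is, up to these $U_{\fp}$-corrections, a Stickelberger element $\theta_S^T(0)$ for some $S \supseteq S\ram\cup S_{\infty}$ and $T$ with $Hyp(S,T)$. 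Its integrality is classical for abelian $G$ \cite{Deligne-Ribet, Cassou, Barsky} and is the content of Theorem \ref{Stickelberger-int} in the monomial-times-abelian case, with Lemma \ref{int-lem} supplying the complementary bound $\mc I_p(G\times C)\subseteq\zeta(\mc M_p(G))[C]$, so that a character-theoretic (Brauer-induction) reduction can match the two sides component by component.

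The wild primes $p \mid |G|$ are the substantive case, and there I would go through Iwasawa theory. Let $L^+$ be the maximal totally real subfield of $L$, let $L^+_{\infty}/L^+$ be its cyclotomic $\zp$-extension and put $\mathcal G = \gal(L^+_{\infty}/K)$; the equivariant Iwasawa main conjecture over $\zp[[\mathcal G]]$, which holds once the $\mu$-invariant attached to $L$ and $p$ vanishes \cite{EIMC-theorem, Kakde-mc, Burns-mc}, identifies the relevant $p$-adic $L$-function with a characteristic element of an Iwasawa module built from the class groups in the cyclotomic tower. Specialising $\mathcal G \twoheadrightarrow G$ and carrying out the descent — controlled precisely by $\mu = 0$ — turns this into the statement that $\theta_S^T(0)$, after multiplication by exactly the Euler-factor and inertia corrections encoded by the $U_{\fp}$- and $\fA_S$-factors in the definition of $SKu$, is a Fitting invariant over $\zpg$ of a module closely related to the minus $p$-part of the ray class group (the abelian prototype being \cite{Gr-Fitt-ETNC}); since every Fitting invariant of a $\zpg$-module lies in $\mc I_p(G)$ by construction, this gives $SKu(L/K)(p)\subseteq\mc I_p(G)$. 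Equivalently — and this is the cleanest way to organise it — the $p$-part of the conjecture is a formal consequence of the $p$-minus part of the ETNC for $(h^0(\Spec(L))(0),\zg)$, and the argument just sketched is exactly the proof of that statement (Theorem \ref{ETNC-minus}), with Theorem \ref{Stickelberger-int} as the essential new integrality input. Combining the tame and wild contributions then yields the conjecture for those extensions and primes at which $\mu = 0$ is known.

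The main obstacle is that the wild-prime argument is unconditional only modulo the vanishing of $\mu$, which is open in general, so the present methods prove no more than the $p$-part for non-exceptional $p$ with $\mu = 0$ — which is precisely what Corollary \ref{cor_int_conj} asserts. Even granting $\mu = 0$, the delicate point is to push the ramification corrections ($N_{\ip}$ and the factors $1 - \ve_{\fp}\frob\me$) through the descent without losing integrality; this is exactly why $SKu(L/K)$ is built with the extra $U$- and $\fA_S$-factors rather than from the bare $L$-values, and the prime $2$ and the exceptional primes will require separate treatment. Finally, the plus part of the statement lies entirely outside the reach of the EIMC input and remains the genuine open difficulty.
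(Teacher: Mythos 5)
The quoted ``statement'' is a \textbf{conjecture}: the paper does not prove $SKu(L/K) \subset \mc I(G)$ unconditionally, and it remains open in general. What the paper establishes is the conditional Corollary~\ref{cor_int_conj}, namely that the $p$-part of the conjecture holds whenever $p$ is non-exceptional and the Iwasawa $\mu$-invariant attached to $L$ and $p$ vanishes. You recognise this correctly, and the conditional route you sketch is essentially the paper's: the equivariant Iwasawa main conjecture (available once $\mu=0$) combined with the new integrality input of Theorem~\ref{Stickelberger-int} yields the minus $p$-part of the ETNC (Theorem~\ref{ETNC-minus}), and the $p$-part of the integrality conjecture then falls out via Theorem~\ref{ETNC_implies_int}.

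Two caveats. First, your tame/wild split by $p\nmid |G|$ versus $p\mid |G|$ does not coincide with the paper's exceptional/non-exceptional dichotomy, which is governed instead by the almost-tame condition, the condition $L\hcl\not\subset (L\hcl)^+(\zeta_p)$, and $p=2$; and the paper does not attempt a direct proof that $\theta_S^T(0)\in\zeta(\zpg)$ for arbitrary $G$ at tame primes --- it routes even the tame case through the minus ETNC (unconditional for $p\nmid|G|$ by the strong Stark input of \cite{ich-tame}) and Theorem~\ref{ETNC_implies_int}, while the unconditional integrality results in \S4 (Theorems \ref{SKu-MaxOrd} and \ref{Stickelberger-int}) are restricted to monomial, respectively monomial-times-abelian, $G$ and concern $SKu'$ and $\zeta(\mc M(G))$ rather than $SKu$ and $\mc I(G)$. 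Second, your closing remark that ``the plus part of the statement lies entirely outside the reach of the EIMC'' misreads the situation: for a CM-extension the plus part of $SKu(L/K)$ is identically zero, since $L(0,\chi)=0$ for every even irreducible character $\chi$ (and after the reduction of Proposition~\ref{reduction-step} one is always in the CM case when $r=0$). The genuine obstacle is the unconditional vanishing of $\mu$ together with the finitely many exceptional primes, exactly as you note in the preceding sentence.
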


    \begin{rem}
      \ben[(i)]
            \item
            Since clearly $SKu(L/K,S) \subseteq SKu(L/K,S')$ whenever $S' \subseteq S$, Conjecture \ref{integrality-conj}
            implies $SKu(L/K,S) \subseteq \mc I(G)$ for all admissible sets $S$.
            \item
            If the sets $S$ and $T$ satisfy $Hyp(S,T)$, the Stickelberger element
            $\theta_S^T(0)$ is contained in $SKu(L/K,S)$. Hence Conjecture \ref{integrality-conj}
            predicts that $\theta_S^T(0) \in \mc I(G)$ which is part of \cite[Conjecture 2.1]{ich-stark}.
            \item
            In the above definitions, we may replace $\Z$ and $\Q$ by $\zp$ and $\qp$, respectively.
            We obtain a local Sinnott-Kurihara ideal $SKu_p(L/K)$ contained in $\zeta(\qpg)$. Since we have an equality
            $$\mc I(G) = \bigcap_p (\mc I_p(G) \cap \zeta(\qg)),$$
            we have an equivalence
            $$SKu(L/K) \subseteq \mc I(G) \iff SKu_p(L/K) \subseteq \mc I_p(G) ~\forall p.$$
      \een
    \end{rem}

    If $G$ is abelian, we obviously have $\mc I(G) = \zeta(\zg) = \zg$ and the results in
    \cite{Barsky}, \cite{Cassou}, \cite{Deligne-Ribet} each imply the following theorem (cf.~\cite[\S 2]{Gr-Fitt-ETNC}).

    \begin{theo} \label{BCDR-theo-0}
      Conjecture \ref{integrality-conj} holds if $L/K$ is an abelian extension.
    \end{theo}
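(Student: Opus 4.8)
The plan is to deduce the abelian case of Conjecture \ref{integrality-conj} from the classical integrality of abelian Stickelberger elements at $0$ --- due to Deligne and Ribet \cite{Deligne-Ribet}, and independently to Cassou-Nogu\`es \cite{Cassou} and Barsky \cite{Barsky} --- together with Sinnott's device of descending to suitable subfields. Since $G$ is abelian we have $\mc I(G) = \zeta(\zg) = \zg$, so we must show that the Sinnott--Kurihara ideal $SKu(L/K) = SKu(L/K, S\ram \cup S_{\infty})$ lies in $\zg$; write $S := S\ram \cup S_{\infty}$. As $\zeta(\zg) = \zg$ and the reduced norm is the identity in the commutative case, $\fA_S$ is generated by the elements $\de_T(0)$ with $Hyp(S,T)$, while $U_{\fp} = \langle N_{\ip},\, 1-\ve_{\fp}\frob\me\rangle_{\Z\gp}$ and products distribute over sums, so $U$ is generated as a $\zg$-module by the products $\prod_{\fp} v_{\fp}$ with $v_{\fp} \in \{N_{\ip},\, 1-\ve_{\fp}\frob\me\}$. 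Hence $SKu(L/K,S)$ is the $\zg$-module generated by the elements
\[
 \de_T(0) \cdot \Big( \prod_{\fp \in S \sm S_{\infty}} v_{\fp} \Big) \cdot L(0)^{\sharp},
\]
where $L(0) = L_{S_{\infty}}(0)$, the set $T$ runs over all finite sets with $Hyp(S,T)$, and each $v_{\fp}$ ranges over the two indicated module generators of $U_{\fp}$. Thus it suffices to show that each such generator lies in $\zg$.

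I would first record the identity $\big(\prod_{\fp \in S \sm S_{\infty}} (1 - \ve_{\fp}\frob\me)\big) \cdot L(0)^{\sharp} = L_S(0)^{\sharp}$ in $\zeta(\qg)$, which just expresses that multiplying $L(0)^{\sharp}$ by $1 - \ve_{\fp}\frob\me$ deletes the Euler factor of $L$ at $\fp$ (this is also the content of the inclusion $\theta_S^T(0) \in SKu(L/K,S)$ noted in the remark after Conjecture \ref{integrality-conj}). Consequently, when $v_{\fp} = 1 - \ve_{\fp}\frob\me$ for all $\fp$, the generator above equals $\de_T(0) L_S(0)^{\sharp} = \theta_S^T(0)$, which lies in $\zg$ by the classical integrality theorem --- note that $Hyp(S,T)$ contains exactly the torsionfreeness hypothesis on $E_S^T$ that this theorem requires. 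For a general generator put $\mathcal R := \{\fp \in S\sm S_{\infty} : v_{\fp} = N_{\ip}\}$, let $H := \langle \ip : \fp \in \mathcal R\rangle \subset G$, and $M := L^H$. Using $N_{\ip} = |\ip|\,\ve_{\fp}$ and the relation $N_I N_{I'} = |I\cap I'|\,N_{II'}$ valid in an abelian group, one gets $\prod_{\fp \in \mathcal R} N_{\ip} = m\, N_H = m\,|H|\,\ve_H$ for some positive integer $m$, where $\ve_H$ is the idempotent of $\qg$ cutting out the characters inflated from $\Gal(M/K)$; this yields a ring identification $\ve_H \qg \cong \Q[\Gal(M/K)]$ under which $\ve_H\zg$ corresponds to $\Z[\Gal(M/K)]$.

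The heart of the argument is the behaviour of the $L$-data under this identification. Standard inflation properties of Artin $L$-functions give, under $\ve_H\qg \cong \Q[\Gal(M/K)]$, the equalities $\ve_H \de_T(L/K,0) = \de_T(M/K,0)$ and $\ve_H L_{S_{\infty}}(L/K,0)^{\sharp} = L_{S_{\infty}}(M/K,0)^{\sharp}$, while for $\fp \notin \mathcal R$ the element $\ve_H(1 - \ve_{\fp}\frob\me)$ corresponds to the analogous local element $1 - \ve_{\fp}^{M/K}\frob\me$ for $M/K$ (the inertia group and Frobenius of $\fp$ in $M/K$ being the images of those in $L/K$). Combining this with the Euler-factor identity of the previous paragraph, the generator under consideration equals $m\,|H|\,\ve_H \cdot \theta_{S'}^T(M/K,0)$, where $S' := S_{\infty} \cup \big((S\sm S_{\infty}) \sm \mathcal R\big)$, and it remains to see that $Hyp(S',T)$ holds for $M/K$: each $\fp \in \mathcal R$ is unramified in $M/K$, so $S'$ contains $S_{\infty}$ and all primes ramifying in $M/K$; $S' \cap T \subset S \cap T = \emptyset$; and the unit group $E_{S'}^T$ of $M$ is torsionfree, since every root of unity of $M$ is a root of unity of $L$ and a root of unity that is $\equiv 1$ modulo $\prod_{\fq \in T(M)}\fq$ is a fortiori $\equiv 1$ modulo $\prod_{\fP \in T(L)}\fP$. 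Granting this, $\theta_{S'}^T(M/K,0) \in \Z[\Gal(M/K)]$ by the classical theorem, hence equals $\ve_H\widetilde{\theta}$ for some $\widetilde{\theta} \in \zg$, and the generator is $m\,N_H\,\widetilde{\theta} \in \zg$. This proves $SKu(L/K,S) \subset \zg = \mc I(G)$. The only genuine obstacle is the bookkeeping of the last two paragraphs: pinning down the Euler-factor identity and the inflation-compatibility of $\de_T$, $L_S$ and the modules $U_{\fp}$ with the idempotents $\ve_{\fp}$, and verifying that the descended data $(S',T)$ still satisfy $Hyp$ for $M/K$; granting these, the statement follows formally from the one-dimensional case.
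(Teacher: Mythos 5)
Your proposal is correct and fills in, via Sinnott's descent to the subfield $M = L^H$ cut out by the inertia groups at the primes where $N_{\ip}$ is chosen, exactly the reduction to the Deligne--Ribet/Cassou-Nogu\`es/Barsky integrality theorem that the paper leaves to the cited reference \cite[\S 2]{Gr-Fitt-ETNC}. Since the paper itself gives no argument beyond the citation, and your descent argument is the standard one underlying the Sinnott--Kurihara ideal, this is essentially the same proof, spelled out.
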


    \subsection{The modified Sinnott-Kurihara ideal}
    We also define a modified version of the Sinnott-Kurihara ideal as follows. For a finite prime $\fp$ of $K$, define a $\mc I(\gp)$-module
    $U_{\fp}'$ by
    $$U_{\fp}' := \langle \nr(N_{\ip}), \nr(1 - \ve_{\fp}\frob\me)\rangle_{\mc I(\gp)} \subset \zeta(\Q \gp).$$
    If $S$ contains $S\ram \cup S_{\infty}$, we define
    $$U' := \prod_{\fp \in S \sm S_{\infty}} U_{\fp}',$$
    $$SKu'(L/K,S) := \fA_S \cdot U' \cdot L(0)^{\sharp} \subseteq SKu(L/K,S).$$
    As before, the definition of $U'$ does not depend on $S$ and
    all the above remarks remain true if we replace $SKu(L/K,S)$ by $SKu'(L/K,S)$ throughout.
    We put $SKu'(L/K) := SKu'(L/K, S\ram \cup S_{\infty})$. If $G$ is abelian, the reduced norm
    is just the identity on $\Q G$. Moreover, $\fA_S$ is the whole $\Z G$-annihilator of $\mu_L$, the roots of unity in $L$,
    and hence independent of $S$. This implies the following proposition.

    \begin{prop}
    If $L/K$ is an abelian extension, then
    $$SKu(L/K) = SKu(L/K,S) = SKu'(L/K,S) \subseteq \Z G$$
    for all admissible sets $S$.
    \end{prop}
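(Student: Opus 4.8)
The plan is to unravel the definitions of $SKu(L/K)$, $SKu(L/K,S)$ and $SKu'(L/K,S)$ in the abelian case, where every ring in sight becomes commutative, to verify that the three modules coincide and are independent of the chosen admissible set $S$, and finally to recognise the remaining inclusion $SKu(L/K)\subseteq\Z G$ as the classical integrality theorem already quoted above. The only genuine input will be Theorem \ref{BCDR-theo-0}.

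First I would use that $G$ abelian forces $\zeta(\Z G)=\Z G$, $\zeta(\Q G)=\Q G$, and that the reduced norm is the identity map on $\Q G$ (and likewise on each $\Q\gp$, $\gp$ being a subgroup of the abelian group $G$). Hence, for every finite prime $\fp$ of $K$, the module $U_{\fp}'$ is merely the $\Z\gp$-span of $\nr(N_{\ip})=N_{\ip}$ and $\nr(1-\ve_{\fp}\frob\me)=1-\ve_{\fp}\frob\me$, that is, $U_{\fp}'=U_{\fp}$ as submodules of $\Q\gp$. Therefore $U'=U$, and inserting this into the definitions immediately yields $SKu(L/K,S)=SKu'(L/K,S)$ for every admissible set $S$.

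Next I would show that $SKu(L/K,S)=\fA_S\cdot U\cdot L(0)^{\sharp}$ does not depend on the admissible set $S$, by examining the three factors in turn: $L(0)^{\sharp}=L_{S_{\infty}}(0)^{\sharp}$ involves no $S$; the module $U=\prod_{\fp\in S\sm S_{\infty}}U_{\fp}$ is unaffected by enlarging $S$, since $U_{\fp}=\Z\gp$ whenever $\fp$ is unramified in $L/K$, as was noted when $U$ was introduced; and by \cite[Lemma 1.1, p.~82]{Tate-Stark} the module $\fA_S$ equals, in the abelian case, the full $\Z G$-annihilator of the group $\mu_L$ of roots of unity of $L$, which is visibly independent of $S$. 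Evaluating the product at $S=S\ram\cup S_{\infty}$ then gives $SKu(L/K,S)=SKu(L/K)$. (In the general non-abelian situation only the inclusion $SKu(L/K,S)\subseteq SKu(L/K)$ was available; abelianness is exactly what upgrades it to an equality here.)

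Finally, since $\mc I(G)=\zeta(\Z G)=\Z G$ for abelian $G$, the remaining assertion $SKu(L/K)\subseteq\Z G$ is precisely Conjecture \ref{integrality-conj} for the extension $L/K$, which holds by Theorem \ref{BCDR-theo-0}, i.e.\ by the results of \cite{Barsky}, \cite{Cassou}, \cite{Deligne-Ribet}. I do not anticipate any real obstacle here: the argument is essentially bookkeeping, the only points needing a moment's attention being that $U_{\fp}'=U_{\fp}$ is an honest equality of submodules of $\Q\gp$ (not merely an $\nr(\Z\gp)$-equivalence) and that the $S$-independence of $\fA_S$ really does follow from Tate's lemma — both transparent once $G$ is abelian.
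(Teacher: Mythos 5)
Your argument matches the paper's own reasoning exactly: the paper "proves" this proposition in the paragraph immediately preceding it, observing that the reduced norm is the identity on $\Q G$ (so $U'=U$), that $\fA_S$ equals the full $\Z G$-annihilator of $\mu_L$ by \cite[Lemma 1.1, p.~82]{Tate-Stark} and is hence $S$-independent, and that the inclusion in $\Z G$ is Theorem \ref{BCDR-theo-0}. Your write-up just makes these steps explicit, and your side remark about the inclusion $SKu(L/K,S)\subseteq SKu(L/K)$ becoming an equality in the abelian case is a correct and useful clarification.
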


    \subsection{Negative integers}
    We now discuss a (partial) analogue of Conjecture \ref{integrality-conj} in the case, where $r<0$ is a negative integer.
    We denote the absolute Galois group of $L$
    by $G_L$ and put $\mu_{1-r}(L) := (\Q / \Z)(1-r)^{G_L}$, where $(\Q / \Z)(1-r)$ denotes the usual
    $(1-r)$-fold Tate twist of $\Q/\Z$.

    \begin{con} \label{integrality-conj_r}
    Let $L/K$ be a Galois extension of number fields with Galois group $G$ and let $r<0$ be an integer.
    Then for every $x \in \Ann_{\zg}(\mu_{1-r}(L))$ one has
    $$\nr(x) \cdot \theta_S(r) \in \mc I(G)$$
    for all finite sets $S$ of primes of $K$ containing $S\ram \cup S_{\infty}$.
    \end{con}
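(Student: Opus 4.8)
The plan is threefold: (i) a few elementary reductions; (ii) a direct proof when $G$ is a direct product $G=H\times C$ of a monomial group $H$ and an abelian group $C$, obtained by monomial induction from the integrality of abelian Stickelberger elements at negative integers; and (iii) for a general CM-extension, a deduction from the $p$-minus part of the ETNC.

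For the reductions: by Siegel's theorem $\theta_S(r)\in\zeta(\qg)$, and $\nr(x)\in\zeta(\qg)$, so $\nr(x)\cdot\theta_S(r)\in\zeta(\qg)$; since $\mc I(G)=\bigcap_p\mc I_p(G)\cap\zeta(\qg)$ it suffices to prove $\nr(x)\cdot\theta_S(r)\in\mc I_p(G)$ for every prime $p$. Next, $\mc I_p(G)$ is a subring of $\zeta(\qpg)$ containing $\nr(\zpg)$, because $\nr(H)\nr(H')=\nr(H\oplus H')$; in particular $\nr(\zpg)\cdot\mc I_p(G)\subseteq\mc I_p(G)$. Enlarging $S$ only improves matters --- it multiplies $\theta_S(r)$ by finitely many Euler factors at unramified primes, whose effect on membership in $\mc I_p(G)$ is controlled exactly as in the definition of $SKu'(L/K,S)$ via the modules $U'_\fp$ --- so, since $\Ann_{\zg}(\mu_{1-r}(L))$ is independent of $S$, we may assume $S=S\ram\cup S_\infty$.

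For $G=H\times C$ the heart is the negative-integer analogue of Theorem \ref{Stickelberger-int}: $\theta_S^T(r)=\de_T(r)\cdot\theta_S(r)\in\zeta(\mc M(H))[C]$ whenever $\de_T(r)$ annihilates $\mu_{1-r}(L)$ (the substitute for $Hyp(S,T)$ at $r<0$), together with a presentation-level refinement. I would prove this by Brauer/monomial induction: an irreducible character of $G$ is of the form $\mathrm{Ind}_U^H\psi$ on the $H$-factor, with $\psi$ linear on some $U\leq H$, tensored with a character on $C$; then $\psi$ factors through an abelian quotient $\Gal(F_0/F)$, and inductivity of Artin $L$-functions identifies the corresponding component of $\theta_S^T(r)^\sharp$ with an $(S,T)$-modified Stickelberger value for the abelian extension $F_0/F$, twisted by a character of $C$. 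For $r=0$ the integrality of these abelian values in the pertinent group ring is the Deligne--Ribet / Cassou-Nogu\`es / Barsky theorem (Theorem \ref{BCDR-theo-0}); for $r<0$ it is the unconditional Coates--Sinnott integrality of abelian $L$-values at negative integers. Reassembling over the Wedderburn idempotents of $\mc M(H)$, base-changing along $C$, and using $\zeta(\mc M(H)[C])=\zeta(\mc M(H))[C]$ (as recorded in Theorem \ref{Stickelberger-int}) then yields the statement; moreover, since in the commutative case the $\de_T(r)$ generate $\Ann(\mu_{1-r}(L))$, one may transport the relation through the monomial decomposition to replace $\de_T(r)$ by a general $\nr(x)$ with $x\in\Ann_{\zg}(\mu_{1-r}(L))$.

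The step I expect to be the real obstacle is upgrading this maximal-order membership to membership in $\mc I_p(G)$, which is strictly smaller than $\zeta(\mc M_p(H))[C]$ when $p\mid|H|$ --- this is exactly the gap that separates Theorem \ref{SKu-MaxOrd} from Conjecture \ref{integrality-conj}. The plan for it is to realise $\nr(x)\cdot\theta_S(r)$ as a generator of a non-commutative Fitting invariant of the finite $\zpg$-module $H^2_{\mathrm{et}}(\mathcal{O}_{L,S},\zp(1-r))$ (or of a suitable syzygy of it): any representative of $\Fitt_{\zpg}(h)$ lies in $\mc I_p(G)$ by construction, so it suffices to transport the ``Fitting form'' of Coates--Sinnott over the abelian base fields up to $\zg$ through the induction functor on finitely presented modules, keeping track of how the reduced norm and the operator $H\mapsto H^\ast$ behave under induction; here Theorem \ref{annihilation-theo} and the $\mc H(G)$-formalism mediate between $\Fitt^{\max}$ and quadratic presentations, and the factor $\nr(x)$ enters because $x$ annihilates the module. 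Should this direct route stall, the fallback --- exactly the logic behind Corollary \ref{cor_int_conj} --- is to invoke the $p$-minus part of the ETNC for CM-extensions, available at non-exceptional $p$ once the $\mu$-invariant vanishes, which forces $\nr(x)\cdot\theta_S(r)$ into $\mc I_p(G)$.
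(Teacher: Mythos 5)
The statement you were given is labelled as a \emph{conjecture} in the paper --- there is no proof to compare against, and the paper itself does not claim one. What the paper actually establishes is strictly weaker: Theorem~\ref{BCDR-theo-r} (the abelian case, via Deligne--Ribet / Cassou-Nogu\`es / Barsky), Theorem~\ref{Stickelberger-int-II} (membership of $\theta_S^T(r)$ in the maximal order $\zeta(\mc M_p(H))[C]$ when $G=H\times C$ with $H$ monomial and $C$ abelian), and Theorem~\ref{ETNC_implies_int_r} together with its corollary (the $p$-part of the conjecture, but only conditionally on the $p$-part of the ETNC for $(h^0(\Spec(L))(r),\zg)$, or on vanishing of the Iwasawa $\mu$-invariant, and only when $L$ is CM or totally real so that the relevant Iwasawa machinery applies). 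Your proposal faithfully reproduces this same architecture --- the $S$-enlargement reduction is exactly the remark following the conjecture, the monomial-induction computation of the $\chi$-component is exactly the proof of Theorem~\ref{Stickelberger-int-II}, and the ``fallback'' is exactly Theorem~\ref{ETNC_implies_int_r} --- so as a survey of what is known you are on target.

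The genuine gap is the one you yourself flag: passing from $\zeta(\mc M_p(H))[C]$ to $\mc I_p(G)$ when $p\mid|H|$. Your sketch for closing it --- realising $\nr(x)\cdot\theta_S(r)$ as a generator of a noncommutative Fitting invariant of $H^2_{\mathrm{et}}(\mc O_{L,S},\zp(1-r))$ and transporting quadratic presentations through induction while tracking $\nr$ and $H\mapsto H^\ast$ --- is not carried out, and there is no reason to expect it to go through unconditionally: the conjecture \emph{is} the assertion that some such presentation exists with the right Fitting invariant, and that is precisely what the paper can only obtain under the ETNC or under $\mu=0$ (for suitable $L$, and only after the reduction of Proposition~\ref{reduction-step} which requires a unique $2$-Sylow subgroup). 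The ETNC fallback therefore does not rescue the argument in general: it applies only to CM (resp.~totally real) $L$, only at primes where the relevant Iwasawa-theoretic input is available, and gives only the $p$-part under a conjectural hypothesis. So your proposal is an accurate account of the state of the art, not a proof --- which is consistent with the fact that the target statement is still a conjecture in the paper.
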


    \begin{rem}
    \ben[(i)]
        \item
        If $S \subseteq S'$, then we have an equality
        $$\theta_{S'}(r) = (\prod_{\fp \in S' \sm S} \nr(1-N(\fp)^{-r} \frob\me)) \theta_S(r).$$
        As $\nr(1-N(\fp)^{-r} \frob\me))$ lies in $\mc I(G)$, it suffices to consider the set $S = S\ram \cup S_{\infty}$
        in Conjecture \ref{integrality-conj_r}.
        \item
        Since the $\zg$-annihilator of $\mu_{1-r}(L)$ is generated by the elements $\prod_{\fp \in T} (1 - \frob N(\fp)^{1-r})$, where $T$ runs through
        all finite sets of primes in $K$ such that $Hyp(S,T)$ is satisfied (cf.~\cite{Coates-annihilator}),
        Conjecture \ref{integrality-conj_r} in particular implies that $\theta_S^T(r) \in \mc I(G)$ for all
        finite sets of primes $S$ and $T$ such that $Hyp(S,T)$ holds.
        \item
        Note that Conjecture \ref{integrality-conj_r} outside its $2$-primary part implicitly is a part of \cite[Conjecture 2.11]{ich-K-groups} if
        either $L/K$ is a CM-extension and $r$ is even or $L/K$ is an extension of totally real fields and $r$ is odd.
    \een
    \end{rem}

    Again, the results in \cite{Barsky}, \cite{Cassou}, \cite{Deligne-Ribet} each imply the following theorem.

    \begin{theo} \label{BCDR-theo-r}
    Conjecture \ref{integrality-conj_r} holds if $L/K$ is an abelian extension.
    \end{theo}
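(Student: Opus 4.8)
The plan is to deduce Theorem~\ref{BCDR-theo-r}, exactly as Theorem~\ref{BCDR-theo-0} is deduced in the case $r = 0$ (cf.~\cite[{\S}2]{Gr-Fitt-ETNC}), from the classical integrality theorems for $(S,T)$-modified Stickelberger elements due to Deligne--Ribet~\cite{Deligne-Ribet}, Cassou-Nogu\`{e}s~\cite{Cassou} and Barsky~\cite{Barsky}, combined with Coates' description of the annihilator of the Galois module $\mu_{1-r}(L)$ in~\cite{Coates-annihilator}. Since $G$ is abelian, $\mc I(G) = \zeta(\zg) = \zg$ and the reduced norm is the identity on $\qg$, so Conjecture~\ref{integrality-conj_r} reduces to the statement that $x\cdot\theta_S(r) \in \zg$ for every $x \in \Ann_{\zg}(\mu_{1-r}(L))$ and every finite set $S \supseteq S\ram \cup S_\infty$.

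The first step reduces both the set $S$ and the element $x$. By Remark~(i) following Conjecture~\ref{integrality-conj_r}, enlarging $S$ multiplies $\theta_S(r)$ by a factor $\prod_{\fp}(1 - N(\fp)^{-r}\frob^{-1}) \in \zg$ (note $N(\fp)^{-r} \in \Z$ because $r < 0$), so we may take $S = S\ram \cup S_\infty$. By Coates~\cite{Coates-annihilator} (see Remark~(ii)), $\Ann_{\zg}(\mu_{1-r}(L))$ is generated as a $\zg$-module by certain products of unramified Euler factors indexed by the finite sets $T$ of places with $Hyp(S,T)$. A short computation --- using that for an unramified prime $\fp$ the element $\frob^{-1}$ acts on the finite cyclic group $\mu_{1-r}(L)$ through multiplication by $N(\fp)^{-(1-r)}$ --- identifies these generators, up to the involution ${}^{\sharp}$ (an automorphism of $\zg$) and up to the evident multiples of $\#\mu_{1-r}(L)$ lying in the annihilator, with the group-ring elements $\de_T(r) = \prod_{\fp\in T}(1 - N(\fp)^{1-r}\frob^{-1})$ underlying the equivariant Euler factors $\de_T(r) \in \zeta(\qg)$ for abelian $G$; in particular $\{\de_T(r) : Hyp(S,T)\}$ also generates $\Ann_{\zg}(\mu_{1-r}(L))$. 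Writing $x = \sum_j a_j\de_{T_j}(r)$ with $a_j \in \zg$, we are reduced to proving $\de_T(r)\cdot\theta_S(r) \in \zg$.

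But by the very definitions of the excerpt, $\de_T(r)\cdot\theta_S(r) = \de_T(r)\cdot L_S(r)^{\sharp} = \Theta_{S,T}(r) = \theta_S^T(r)$, and the integrality $\theta_S^T(r) \in \zg$ for an abelian extension $L/K$, an integer $r \le 0$, and sets $S, T$ satisfying $Hyp(S,T)$ is precisely what is proved in~\cite{Deligne-Ribet},~\cite{Cassou},~\cite{Barsky}. This finishes the proof. (Note that if $K$ is not totally real, or if the parity of $r$ is incompatible with $L$ being totally real, resp.\ a CM-field, then all relevant Artin $L$-functions vanish at the negative integer $r$, so $\theta_S(r) = 0$ and there is nothing to prove; one is thus always in the range of validity of the classical theorems.)

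The only genuine work lies in the second paragraph: matching Coates' generators of $\Ann_{\zg}(\mu_{1-r}(L))$ with the Euler factors $\de_T(r)$ that literally enter $\theta_S^T(r)$, keeping careful track of the involution ${}^{\sharp}$, of contragredient characters, and of the passage between the exact integers $N(\fp)^{1-r}$ and the residue classes $N(\fp)^{r-1}$ modulo $\#\mu_{1-r}(L)$. Once this identification is recorded, the theorem is a direct transcription of the results of Deligne--Ribet, Cassou-Nogu\`{e}s and Barsky.
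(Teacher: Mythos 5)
The paper does not actually give a proof of this theorem: as for the $r=0$ case (Theorem~\ref{BCDR-theo-0}), the text consists of the single sentence ``Again, the results in \cite{Barsky}, \cite{Cassou}, \cite{Deligne-Ribet} each imply the following theorem.'' Your reconstruction of what lies behind that citation is therefore valuable, and your overall strategy --- reduce to $S = S\ram\cup S_\infty$, describe $\Ann_{\zg}(\mu_{1-r}(L))$ via Coates, then feed the generators into the integrality $\theta_S^T(r)\in\zg$ of Deligne--Ribet, Cassou-Nogu\`es and Barsky --- is exactly the intended one.

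The one place where your write-up goes wrong is the ``identification'' step. You claim to match the Coates generators (as written in the paper's Remark (ii), $\prod_{\fp\in T}(1-\frob\,N(\fp)^{r-1})$) with the Euler factors $\de_T(r)=\prod_{\fp\in T}(1-N(\fp)^{1-r}\frob\me)$ ``up to the involution $\sharp$ and up to multiples of $\#\mu_{1-r}(L)$.'' Neither reduction is legitimate as stated. First, $\sharp$ does not preserve $\Ann_{\zg}(\mu_{1-r}(L))$ in general: it sends that ideal to the annihilator of the module with the inverse cyclotomic twist, which is a different $G$-module unless $\kappa^{2(1-r)}=1$, so ``up to $\sharp$'' is not a move you can make inside the annihilator. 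Second, the expression $\prod(1-\frob\,N(\fp)^{r-1})$ is not even an element of $\zg$ when $r<0$, since $N(\fp)^{r-1}=N(\fp)^{-(1-r)}$ is a proper fraction; this should have been a warning sign. (Almost certainly the remark in the paper contains a sign misprint; the correct generators are the $\de_T(r)$ themselves, in direct analogy with Tate's \cite[Lemma 1.1, p.~82]{Tate-Stark} for $r=0$.) Because the quantities $N(\fp)^{1-r}$ and $N(\fp)^{r-1}$ are multiplicative inverses modulo $w=\#\mu_{1-r}(L)$ but genuinely different integers/rationals, the naive ``conversion modulo $w$'' inside $\zg$ does not go through.

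The fix is simple and in fact shortens your argument: the form of Coates' theorem that one should quote is precisely that, for abelian $G$, the $\zg$-annihilator of $\mu_{1-r}(L)$ is \emph{generated} by the Euler factors $\de_T(r)=\prod_{\fp\in T}(1-N(\fp)^{1-r}\frob\me)$ as $T$ runs over the finite sets satisfying $Hyp(S,T)$ (this follows from \v Cebotarev exactly as in Tate's $r=0$ lemma). Since $\zg$ is a ring and every $x\in\Ann$ is a $\zg$-linear combination of the $\de_T(r)$, the claim $x\cdot\theta_S(r)\in\zg$ reduces at once to $\de_T(r)\cdot\theta_S(r)=\theta_S^T(r)\in\zg$, which is the Deligne--Ribet/Cassou-Nogu\`es/Barsky integrality theorem. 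That is the clean version of your argument; drop the detour through $\sharp$ and ``multiples of $w$'' entirely.
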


    \section{A reduction step} \label{sec:reduction-step}

    In order to prove one of the conjectures of the preceding paragraph, we may henceforth assume that the field $K$ is totally real,
    as otherwise $\theta_{S_{\infty}}(r) = L(r)^{\sharp} = 0$; hence also $SKu(L/K) = 0$ and $\theta_S(r) = 0$ for all finite sets $S$
    containing $S_{\infty}$. By the same reason, we may assume that $L$ is totally complex if $r$ is even.
    Note that we actually have to exclude the special case, where $r=0$ and $L / \Q$ is a CM-extension of degree $2$.
    But in this case the occurring Galois group is abelian and everything is known by Theorem \ref{BCDR-theo-0}.\\

    Let us denote the set of complex places of $L$ by $S_{\C}(L)$. For every $w \in S_{\C}(L)$, the decomposition group $G_w$ is cyclic of order
    two and we denote its generator by $j_w$. If $r$ is even we define
    $$H = H(r) := \langle j_w \cdot j_{w'} \mid w, w' \in S_{\C}(L) \rangle.$$
    If r is odd, we define
    $$H = H(r) := \langle j_w  \mid w \in S_{\C}(L) \rangle.$$
    In both cases, $H$ is normal in $G$ such that the fixed field $L^H$ is a Galois extension of $K$ with Galois group $\ol G := G/H$.
    Note that $L^H / K$ is either a Galois CM-extension or a Galois extension of totally real fields.

    \begin{prop} \label{reduction-step}
    Let $L/K$ be a Galois extension of number fields with Galois group $G$ and let $p$ be an odd prime.
    Assume that $L$ is totally imaginary if we consider Conjecture \ref{integrality-conj} or Conjecture \ref{integrality-conj_r} for even $r$.
    Assume further that $G$ has a unique $2$-Sylow subgroup. Then
    the $p$-part of Conjecture \ref{integrality-conj} (resp.~Conjecture \ref{integrality-conj_r}) is true for $L/K$ if and only if the $p$-part of
    Conjecture \ref{integrality-conj} (resp.~Conjecture \ref{integrality-conj_r}) is true for $L^H / K$.
    \end{prop}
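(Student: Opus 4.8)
The plan is to reduce everything to the corresponding statement for $L^H/K$ by splitting off a direct factor of $\zp G$ attached to the quotient $\ol G := G/H$; recall that the $p$-part of Conjecture~\ref{integrality-conj} is the assertion $SKu_p(L/K)\subseteq\mc I_p(G)$. The first, and crucial, observation is that $H = H(r)$ is a $2$-group: each complex conjugation $j_w$ has order $2$ and hence lies in some $2$-Sylow subgroup of $G$, and since by hypothesis $G$ has a \emph{unique} $2$-Sylow subgroup $P$, we get $j_w\in P$ for all $w\in S_{\C}(L)$, whence $H\subseteq P$. In particular $p\nmid|H|$, so $e:=|H|^{-1}N_H$ is a central idempotent of $\zp G$, and inflation along $G\onto\ol G$ induces a ring isomorphism $\zp G\,e\simeq\zp\ol G$. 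Thus $\zp G = \zp\ol G\times\zp G(1-e)$ as rings, compatibly with the formation of centres and reduced norms, so that $\mc I_p(G) = \mc I_p(\ol G)\times\mc I(\zp G(1-e))$; equivalently $e\cdot\mc I_p(G) = \mc I_p(\ol G)$ under the identification $\zp G\,e\simeq\zp\ol G$.

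Next I would show that $SKu_p(L/K)$ — and, for Conjecture~\ref{integrality-conj_r}, the elements $\nr(x)\theta_S(r)$ — lie in the component $e\cdot\zeta(\qp G) = \zeta(\qp\ol G)$, and coincide there with the analogous objects for $L^H/K$. For the first point I use the classical formula for the vanishing order of Artin $L$-functions at $s = r\le 0$: since $K$ is totally real and, for even $r$, $L$ is totally complex, the archimedean Euler factors force $L_S(r,\chi)\neq 0$ only if at every complex place $w$ of $L$ the element $j_w$ acts on $V_{\chi}$ as $-1$ (for even $r$) resp. as $+1$ (for odd $r$); in either case $H$ then acts trivially on $V_{\chi}$, i.e. $\chi$ is inflated from $\ol G$. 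Applied to $L(0)^{\sharp}$ (noting $e^{\sharp}=e$) and to each $\theta_S^T(r)$, this puts $SKu_p(L/K)$ and $\nr(x)\theta_S(r)$ into $e\cdot\zeta(\qp G)$. For the coincidence I use three ingredients: Artin $L$-functions and the Euler-type factors $\de_T(s)$ are invariant under inflation (the primes of $T$ being unramified); $\mu_{1-r}(L^H) = \mu_{1-r}(L)$, because $H$ acts trivially on $(\Q/\Z)(1-r)$ (its generators act through $(-1)^{1-r} = 1$ for odd $r$ and through $(-1)^{2(1-r)} = 1$ for even $r$), so that $Hyp(S,T)$ holds for $L/K$ if and only if it holds for $L^H/K$, whence the modules $\fA_S$ match and $\Ann_{\zp G}(\mu_{1-r}(L))$ corresponds to $\Ann_{\zp\ol G}(\mu_{1-r}(L^H))$ under inflation; and applying $e$ to the ramified local factors $U_{\fp}$ carries $N_{I_{\fp}}$ to $|I_{\fp}\cap H|\cdot N_{\overline{I_{\fp}}}$ and $\ve_{\fp}$, $\Frob_{\fp}$ to the corresponding data for $L^H/K$, so that — since $|I_{\fp}\cap H|$ is a power of $2$, hence a unit in $\zp$ — the $p$-adic local factors for $L/K$ and $L^H/K$ are identified. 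Combining these gives $e\cdot SKu_p(L/K) = SKu_p(L^H/K)$ and $e\cdot\nr(x)\theta_S(r) = \nr(\ol x)\,\theta_S(r)_{L^H/K}$ under the identification $e\cdot\zeta(\qp G) = \zeta(\qp\ol G)$.

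The proposition then follows formally. Since $SKu_p(L/K)\subseteq e\cdot\zeta(\qp G)$ and $e\cdot\mc I_p(G) = \mc I_p(\ol G)$, the inclusion $SKu_p(L/K)\subseteq\mc I_p(G)$ is equivalent to $SKu_p(L/K)\subseteq\mc I_p(\ol G)$, hence (by the previous step) to $SKu_p(L^H/K)\subseteq\mc I_p(\ol G)$, i.e. to the $p$-part of Conjecture~\ref{integrality-conj} for $L^H/K$. For Conjecture~\ref{integrality-conj_r} one argues identically with $\nr(x)\theta_S(r)$, with $x$ ranging over $\Ann_{\zp G}(\mu_{1-r}(L))$. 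I expect the main obstacle to be the bookkeeping in the second step, in particular the behaviour of the ramified local factors $U_{\fp}$ when $I_{\fp}\cap H\neq 1$; this is exactly where the hypothesis that $p$ is odd is needed, so that the $2$-power index $|I_{\fp}\cap H|$ is invertible in $\zp$. The $L$-value parity computation, though standard, also requires some care for the trivial character and, when $r$ is odd, for the real places of $K$ that remain real in $L$.
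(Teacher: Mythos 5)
Your proof is correct and takes essentially the same route as the paper: since $G$ has a unique $2$-Sylow subgroup, $H$ is a $2$-group, so $e_H = |H|^{-1}N_H$ is a central idempotent of $\zp G$ ($p$ odd); the projection $\pi\colon \zp G\onto\zp\ol G$ restricts to an isomorphism $e_H\zp G\simeq\zp\ol G$; the vanishing $L_{S_\infty}(r,\chi)=0$ for $H\not\subset\ker\chi$ forces $SKu_p(L/K)$ (resp.\ $\nr(x)\theta_S(r)$) into $e_H\zeta(\qp G)$, where it identifies with the corresponding datum for $L^H/K$.

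You fill in two points the paper leaves terse. First, you explicitly check that $e_H\cdot U_\fp$ matches $U_{\bar\fp}$ because $\pi(N_{I_\fP}) = |I_\fP\cap H|\cdot N_{\ol{I_\fP}}$ with $|I_\fP\cap H|$ a $2$-power, hence a unit in $\zp$. Second, and more substantively, for $r<0$ you show $\mu_{1-r}(L)=\mu_{1-r}(L^H)$ by checking that $H(r)$ acts trivially on $(\Q/\Z)(1-r)$ (the signs $(-1)^{1-r}=1$ for $r$ odd, $(-1)^{2(1-r)}=1$ for $r$ even). This is genuinely what one needs: the paper only records the tautology $\mu_{1-r}(L)^H=\mu_{1-r}(L^H)$, which suffices for the implication from the conjecture for $L^H/K$ to the conjecture for $L/K$, but the converse needs the surjectivity of $\pi\colon\Ann_{\zp G}(\mu_{1-r}(L))\to\Ann_{\zp\ol G}(\mu_{1-r}(L^H))$, which is exactly your stronger equality.

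One small inaccuracy you yourself flag: the assertion that $L(r,\chi)\neq 0$ forces each $j_w$ to act on $V_\chi$ by the specific scalar $\mp1$ fails for the trivial character when $K=\Q$ (e.g.\ $\zeta_\Q(0)\neq 0$, while $j_w$ acts by $+1$). The conclusion you actually need — $H\subset\ker\chi$ whenever $L_{S_\infty}(r,\chi)\neq 0$ — is still true, since $H\subset\ker\chi$ trivially for $\chi=1$; this is precisely why Section~3 excludes the degenerate case $r=0$, $[L:\Q]=2$. Also note that the relevant $L$-function is $L_{S_\infty}$, not $L_S$ (the finite Euler factors in $S$ can create extra zeros at $s=0$, but cannot create nonvanishing); this does not affect your argument.
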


    \begin{proof}
    Since $H$ is normal in $G$, the group ring element $\ve_H := |H|\me N_H$ is a central idempotent in $\qp G$.
    Let $G_2$ be the unique $2$-Sylow subgroup of $G$. Then $j_w$ lies in $G_2$ for every $w \in S_{\C}(L)$ such that $H$ is a finite $2$-group.
    Since $p$ is odd, this implies that $\ve_H$ actually lies in $\zp G$. Now $\zp G$ decomposes into
    $\zp G = \ve_H \zp G \op (1 - \ve_H) \zp G$ and the canonical epimorphism $\pi: \zp G \twoheadrightarrow \zp \ol G$ induces an isomorphism
    $\pi: \ve_H \zp G \simeq \zp \ol G$. But by the definition of $H$, we have $L_{S_{\infty}}(r,\chi) = 0$ if $H$ is not
    contained in the kernel of the irreducible character $\chi$.
    When $r=0$ this means that $SKu(L/K) = \ve_H \cdot SKu(L/K)$ which identifies with $SKu(L^H/K)$ via $\pi$. Using $\mu_{1-r}(L)^H = \mu_{1-r}(L^H)$,
    a similar observation holds in the case $r<0$.
    \end{proof}

    \begin{rem}
    Note that Proposition \ref{reduction-step} in particular applies when $G$ is nilpotent.
    \end{rem}

    \section{Integrality of Stickelberger elements} \label{sec:int-results}
    The aim of this section is to prove Theorem \ref{SKu-MaxOrd} and Theorem \ref{Stickelberger-int}.

    \subsection{Admissible sets of places}
    If $p$ is a prime, we denote by $S_p$ the set of $p$-adic places of $K$. We now introduce the following terminology.

    \begin{defi}
    If  $r=0$, we will say that $S$ and $T$ are  $(p,0)$\textbf{-admissible} if the following conditions are satisfied:
    \ben[(i)]
        \item
        The union of $S$ and $T$ contains all non-$p$-adic ramified primes, i.e.~$S\ram \sm(S\ram \cap S_p) \subseteq S \cup T$,
        \item
        $S$ contains all wildly ramified primes in $S_p$,
        \item
        $S$ contains the set $S_{\infty}$ of all archimedean primes,
        \item
        $S \cap T = \emptyset$,
        \item
        $E_S^{T_{\nr}}$ is torsionfree, where $T_{\nr}$ denotes the set of all unramified primes in $T$.
    \een
    If $r<0$, we will say that $S$ and $T$ are $(p,r)$\textbf{-admissible} if $Hyp(S,T)$ is satisfied.
    \end{defi}

    Note that $S$ and $T$ are in fact $(p,r)$-admissible for all primes $p$ and all $r \leq 0$ if $Hyp(S,T)$ is satisfied.

    \subsection{Integrality of Stickelberger elements}
    Recall that a finite group $G$ is called {\it monomial} if every irreducible character of $G$ is induced by a linear character.
    Examples of monomial groups are nilpotent groups \cite[Theorem 11.3]{CR-I} and, more generally, supersolvable groups
    \cite[Chapter 2, Corollary 3.5]{Weinstein}. For more information concerning monomial groups we refer the reader to \cite[Chapter 2]{Weinstein}.\\

    Now assume that $L/K$ is a finite Galois extension of number fields with Galois group $G$, where $G$ decomposes
    as $G = H \times C$ with $H$ monomial and $C$ abelian.
    As in the introduction let $\mc M(H)$ (resp.~$\mc M_p(H)$) be a maximal order in $\Q H$ (resp.~$\qp H$) containing $\Z H$ (resp.~$\zp H$).
    Then we may view $\mc M(H)[C]$ (resp.~$\mc M_p(H)[C]$) as an order in $\Q G$ (resp.~$\qp G$) and we have the following
    more general version of Theorem \ref{Stickelberger-int}.

    \begin{theo} \label{Stickelberger-int-II}
    Let $L/K$ be a finite Galois extension of number fields with Galois group $G = H \times C$, where $H$ is monomial and $C$ is abelian.
    Let $p$ be a prime and $r \in \Z_{\leq 0}$. If $S$ and $T$ are two finite sets of primes of $K$ which are $(p,r)$-admissible, then
    $$\theta_S^T(r) \in \zeta(\mc M_p(H)[C]) = \zeta(\mc M_p(H))[C].$$
    In particular, if $Hyp(S,T)$ is satisfied, we have
    $$\theta_S^T(r) \in \zeta(\mc M(H)[C]) = \zeta(\mc M(H))[C].$$
    \end{theo}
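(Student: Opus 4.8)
\emph{Overall plan.} The plan is to decompose $\theta:=\theta_S^T(r)$ along the $\qp$-basis $C$ of $\qp C$ and to treat each component over $H$ separately, exploiting that $H$ is monomial to reduce to ordinary Stickelberger elements of \emph{abelian} subextensions of $L/K$, where the known cases of the integrality conjecture apply. Throughout I would drop, as the paper does, the embedding $\iota$ and the contragredients from the notation, since induction and inflation commute with both. Because $C$ is abelian, $\zeta(\qp G)=\zeta(\qp H)\ot_{\qp}\qp C$ and $\zeta(\mc M_p(H)[C])=\zeta(\mc M_p(H))\ot_{\zp}\zp C$; writing $\theta=\sum_{c\in C}\theta^{(c)}c$ uniquely with $\theta^{(c)}\in\zeta(\qp H)$, the claim becomes $\theta^{(c)}\in\zeta(\mc M_p(H))$ for all $c$, and decomposing $\zeta(\qp H)=\bigoplus_\chi\qp(\chi)$, $\zeta(\mc M_p(H))=\bigoplus_\chi\mc O_{\qp(\chi)}$ with $\chi$ over $\irrp(H)/\sim$, it becomes: each component $\theta^{(c)}_\chi\in\qp(\chi)$ must be shown to be a $p$-adic integer. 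By Siegel's rationality result \cite{Siegel} this component is already the $\iota$-image of an algebraic number, so only a $p$-adic bound is needed.

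\emph{Induction from a linear character.} I would fix $\chi$ and, using monomiality, write $\chi=\mathrm{Ind}_{H'}^H\psi$ with $H'\leq H$ and $\psi$ a linear character of $H'$. By the projection formula $\chi\ot\rho=\mathrm{Ind}_U^G(\psi\ot\rho)$ for $U:=H'\times C$ and any character $\rho$ of $C$, with $\psi\ot\rho$ linear. Put $M:=L^U$ and $N:=L^{[U,U]}$, so $N/M$ is abelian with group $U\ab={H'}\ab\times C$, and let $S'$, $T'$ be the places of $M$ above $S$, $T$. The key input is the inductivity in degree zero and the inflation-invariance of the $(S,T)$-modified equivariant $L$-function: classical for $L_S$, and for the factor $\delta_T$ a consequence of the same Mackey-theoretic computation (matching, for $\fp\in T$ and $\eta=\mathrm{Ind}_U^G\mu$, the local datum of $\eta$ at $\fp$ with the local data of $\mu$ at the primes of $M$ above $\fp$). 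This would give
$$L_S^T(r,\chi\ot\rho;\,L/K)\;=\;L_{S'}^{T'}(r,\psi\ot\rho;\,L/M)\;=\;L_{S'}^{T'}(r,\bar\psi\ot\rho;\,N/M),$$
$\bar\psi$ denoting $\psi$ on ${H'}\ab$; hence, as $\rho$ runs over the characters of $C$, the numbers $L_S^T(r,\chi\ot\rho;L/K)$ are exactly the components of $\vth:=\theta_{S'}^{T'}(r;N/M)$ at the characters $\bar\psi\ot\rho$ of $U\ab$.

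\emph{Abelian input, matching up, and conclusion.} Next I would check that $(p,r)$-admissibility of $(S,T)$ for $L/K$ descends to $(p,r)$-admissibility of $(S',T')$ for $N/M$ — a prime of $N$ ramified over $M$ lies above a prime of $K$ ramified in $L$, wildness of ramification is inherited, and the torsion of the relevant unit group can only shrink inside the subfield $N\subseteq L$ — and then apply the abelian case (Theorem \ref{BCDR-theo-r} for $r<0$, Theorem \ref{BCDR-theo-0} for $r=0$, together with the respective Remarks) to the abelian extension $N/M$ to obtain $\vth\in\mc I(U\ab)=\Z[U\ab]\subset\zp[U\ab]$. The decisive point is that this is a membership in the \emph{group ring}: writing $\vth=\sum_{a\in{H'}\ab}\sum_{c\in C}m_{a,c}\,ac$ with $m_{a,c}\in\zp$, the $(\bar\psi\ot\rho)$-component of $\vth$ equals $\sum_c\rho(c)\mu_c$ with $\mu_c:=\bar\psi\bigl(\sum_a m_{a,c}a\bigr)\in\bar\psi(\zp[{H'}\ab])=\mc O_{\qp(\psi)}$. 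Comparing with $L_S^T(r,\chi\ot\rho;L/K)=\sum_c\rho(c)\,\theta^{(c)}_\chi$ and using the linear independence of the characters of $C$ yields $\theta^{(c)}_\chi=\mu_c\in\mc O_{\qp(\psi)}$; since $\theta^{(c)}_\chi$ is an algebraic number lying in $\qp(\chi)\subseteq\qp(\psi)$ (the character field of an induced character being contained in that of the inducing one), this forces $\theta^{(c)}_\chi\in\mc O_{\qp(\chi)}$. Running over all $\chi$ and $c$ proves $\theta\in\zeta(\mc M_p(H))[C]$; the final ``in particular'' would then follow because $Hyp(S,T)$ is $(p,r)$-admissible for every $p$ and $\zeta(\mc M(H))[C]=\bigl(\bigcap_p\zeta(\mc M_p(H))[C]\bigr)\cap\zeta(\qg)$.

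\emph{Where the difficulty lies.} For $r<0$, $(p,r)$-admissibility is literally $Hyp(S,T)$ and the above is complete. The real work, I expect, is in two supporting points. First, for $r=0$ one has only $(p,0)$-admissibility of $(S',T')$, so before applying Theorem \ref{BCDR-theo-0} one must reduce to $Hyp$ — but now \emph{downstairs}, for the abelian extension $N/M$, which makes the standard manipulation available: at a ramified prime $\fq$ of $M$ whose inertia group $I_{\fq}$ has order prime to $p$ (under $(p,0)$-admissibility these — all non-$p$-adic ramified primes and all tamely ramified $p$-adic primes — are precisely the ones one needs to move into $S'$ or adjoin), the relevant local group rings and Euler factors are $p$-integral, so that $U_{\fq}(p)$ is the full local group ring and $\delta_{\{\fq\}}(0)\in\zp[\Gal(N/M)]$, and feeding this into the definition of the Sinnott--Kurihara module gives $\vth\in\zp[U\ab]$ in this case as well. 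Second, one must establish carefully the inductivity and inflation-invariance of the \emph{full} modified $L$-function $\Theta_{S,T}$ (i.e.\ of the $\delta_T$-factor) and the descent of $(p,r)$-admissibility to $N/M$. I expect the $r=0$ bookkeeping — carrying out the reduction to $Hyp$ without introducing $p$-adic denominators — to be the most technical part; the conceptual core, namely monomiality providing the induction from a linear character and the transfer of the group-ring integrality of $\vth$ across that induction, is short.
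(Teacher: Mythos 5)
Your proposal is correct and follows essentially the same route as the paper: decompose $\zeta(\qp G)$ along the characters of $H$ (and, in your phrasing, along the group-element basis of $C$), use monomiality to write $\chi=\ind_{H'}^{H}\psi$ with $\psi$ linear, invoke inductivity and inflation-invariance of the $(S,T)$-modified equivariant $L$-function to identify the $\chi$-component of $\theta_S^T(r)$ with a component of the Stickelberger element of the abelian subextension $L^{[U,U]}/L^{U\times C}$, and then apply the abelian integrality results of Barsky, Cassou-Nogu\`es and Deligne-Ribet (Theorems \ref{BCDR-theo-0} and \ref{BCDR-theo-r}), finishing with the intersection $\qp(\chi)[C]\cap\zp(\psi)[C]=\zp(\chi)[C]$. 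You are in fact somewhat more careful than the paper on one point the paper passes over in silence, namely that $(p,r)$-admissibility of $(S,T)$ for $L/K$ descends to $(S',T')$ for the abelian subextension $N/M$; your sketch of that descent (unramifiedness, wildness, and the congruence condition all being inherited by intermediate fields) is correct, and for $r=0$ your reduction to the Sinnott--Kurihara $p$-integrality for the abelian layer is exactly the computation the paper carries out (writing $\de_{\fp}(0)=1-\ve_{\fp}\frob\me -\frob\me\tfrac{N(\fp)-1}{e_{\fp}}N_{\ip}\in\zp\ot U_{\fp}$ for tamely ramified $\fp$ and using that $e_{\fp}\in\zpmal$ for tame $p$-adic $\fp$).
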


    \begin{proof}
    We first assume that $G = C$ is abelian. Then $\zeta(\mc M(H))[C] = \Z G$ and the assertion follows easily from Theorem \ref{BCDR-theo-0}
    if $r=0$ and from Theorem \ref{BCDR-theo-r} if $r<0$ as long as $Hyp(S,T)$ is satisfied. We are left with the case, where $r=0$ and
    $S$ and $T$ are $(p,0)$-admissible. We claim that $\theta_S^T(0)$ lies in $SKu_p(L/K)$ and hence Theorem \ref{BCDR-theo-0} again implies
    the desired result. To see this, we write $\theta_S^T(0)$ as
    $$\theta_S^T(0) = \de_{T_{\nr}}(0) \cdot \prod_{\fp \in T \sm T_{\nr}} \de_{\fp}(0) \prod_{\fp \in S \sm S_{\infty}} (1- \ve_{\fp} \frob\me) \cdot L(0)^{\sharp}.$$
    The set $T_{\nr}$ satisfies $Hyp(T_{\nr}, S\ram \cup S_{\infty})$ by condition (v)
    such that $\de_{T_{\nr}}(0)$ lies in $\fA_{S\ram \cup S_{\infty}}$.
    Let $\fp \in T \sm T_{\nr}$ and let $q \in \Z$ be the rational prime below $\fp$.
    By local class field theory \cite[Chapter XV, \S 2]{local-fields}, the local units at $\fp$ surject under the reciprocity map onto $\ip$.
    The subgroup of principal units is mapped onto the $q$-Sylow subgroup of $\ip$. As the factor group of the local units modulo
    the principal units has order $N(\fp)-1$,
    the ramification index
    $e_{\fp} := |\ip|$ divides $N(\fp)-1$ if $q=p$, and still up to a power of $q$ if $q \not=p$. Hence
    $$\de_{\fp}(0) = 1 - \ve_{\fp} \frob\me N(\fp) = 1 - \ve_{\fp} \frob\me - \frob\me \frac{N(\fp)-1}{e_{\fp}} N_{\ip} \in \zp \otimes U_{\fp}.$$
    For the tamely ramified primes above $p$ the element
    $$e_{\fp} = (e_{\fp} - N_{\ip})(1 - \ve_{\fp}\frob\me) + N_{\ip} \in U_{\fp}$$
    lies in $\zpmal$, since $p \nmid e_{\fp}$. Therefore, we get $\zp \otimes U_{\fp} = \zp \gp$ in this case.
    Finally, we obviously have $(1 - \ve_{\fp} \frob\me) \in U_{\fp}$ for the primes $\fp \in S \sm S_{\infty}$. Putting all this together we find
    that $\theta_S^T(0)$ belongs to $SKu_p(L/K)$ as desired.\\

    We now treat the general case, where $G = H \times C$. Since we have to deal with Stickelberger elements corresponding to
    various subextensions of $L/K$, we will write $\theta_S^T(L/K,r)$ for $\theta_S^T(r)$ for clarity. Each irreducible character of $G$ may be written
    as $\chi \cdot \la$, where $\chi \in \irr (H)$  and $\la \in \irr (C)$. We have the following decomposition
    $$\zeta(\qp [H \times C]) = \bigoplus_{\chi \in \irrp(H) / \sim} \qp (\chi)[C],$$
    where the sum runs over all ($\cp$-valued) irreducible characters of $H$ modulo Galois action and $\qp(\chi) := \qp(\chi(h)| h \in H)$.
    We fix an irreducible character $\chi$ of $H$. Then the image of $\theta_S^T(L/K,r)^{\sharp}$ in the $\chi$-component of the above
    decomposition is given by
    \beq \label{in_qpchi}
        \sum_{\la \in \irrp(C)} L_S^T(L/K,\chi \cdot \la,r) e_{\la} \in \qp (\chi)[C]
    \eeq
    and we wish to show that it actually lies in $\zp (\chi)[C]$, where $\zp(\chi)$ denotes the ring of integers in $\qp(\chi)$.
    Since $H$ is monomial, there is a subgroup $U$ of $H$ and a linear character $\psi$ of $U$ such that $\chi$
    is induced by $\psi$, i.e.~$\chi = \ind_U^H \psi$. Let us denote the abelianization of $U$ by $U\ab$; as $\psi$ is linear,
    it is inflated by a character $\psi\ab$ of $U\ab$ and hence $\chi = \ind_U^H \infl_{U\ab}^U \psi\ab$. Note that
    $\psi\ab$ is a linear character and thus irreducible. Moreover, if $\la$ is an irreducible character of $C$, we have
    \begin{equation} \label{eqn:chilambda}
        \chi \cdot \la = (\ind_{U}^H \psi) \cdot \la = \ind_{U \times C}^G (\psi \cdot \la) =
        \ind_{U \times C}^G \infl_{U\ab \times C}^{U \times C} (\psi\ab \cdot \la).
    \end{equation}
    We assure the reader that the usual behavior of $S$-truncated Artin $L$-series under direct sum, induction and inflation of characters holds for our
    $T$-modified version just as well.
    Thus (\ref{eqn:chilambda}) implies that
    $$\sum_{\la \in \irrp(C)} L_S^T(L/K,\chi \cdot \la,r) e_{\la} = \sum_{\la \in \irrp(C)} L_{S'}^{T'}(L^{[U,U]}/L^{U \times C},\psi\ab \cdot \la,r) e_{\la},$$
    where $[U,U]$ denotes the commutator subgroup of $U$ and $S' = S(L^{U \times C})$ and similarly for $T'$.
    But the righthand side lies in $\zp(\psi)[C]$, since it is the $\psi\ab$-component of the Stickelberger element
    $\theta_{S'}^{T'}(L^{[U,U]}/L^{U \times C},r)^{\sharp}$ attached to the {\it abelian} subextension $L^{[U,U]}/L^{U \times C}$.
    This and (\ref{in_qpchi}) imply that
    $$\sum_{\la \in \irrp(C)} L_S^T(L/K,\chi \cdot \la,r) e_{\la} \in \qp (\chi)[C] \cap \zp(\psi)[C] = \zp(\chi)[C]$$
    as desired. In particular, if $Hyp(S,T)$ is satisfied, then the sets $S$ and $T$ are $(p,r)$-admissible for all primes $p$, and hence
    $$\theta_S^T(r) \in \bigcap_p \left( \zeta(\mc M_p(H))[C] \cap \zeta(\Q H)[C] \right) = \zeta(\mc M(H))[C].$$
    \end{proof}

    \begin{rem}
        In the case, where the abelian group $C$ is trivial, Nomura \cite{Nomura} has shown that
        the conditions on the finite sets $S$ and $T$ can be further relaxed.
    \end{rem}

    Now let $J$ be a subset of $S\ram$ and put $S_J := (S_{\infty} \cup S\ram) \sm J$. Let $K \subseteq L_J \subseteq L$ be the maximal subfield of $L$
    such that $L_J/K$ is unramified outside $S_J$. Then $L_J / K$ is a Galois extension with Galois group $G_J = G / H_J$, where $H_J = \Gal(L/L_J)$.
    We have the following stronger version of Theorem \ref{SKu-MaxOrd}.

    \begin{theo} \label{SKu-MaxOrd-II}
    Let $L/K$ be a Galois extension of number fields with Galois group $G$ and let $J$ be a subset of $S\ram$. If the Galois group
    $G_J$ of the subextension $L_J / K$ is monomial, then
    $$\prod_{\fp \in J} \nr(N_{\ip}) \cdot \theta_{S_J}^T(r) \in \zeta(\mc M(G)),$$
    whenever $r \leq 0$ and $Hyp(S\ram \cup S_{\infty}, T)$ is satisfied. In particular,
    $$SKu'(L/K) \subseteq \zeta(\mc M(G))$$
    if $G$ is monomial.
    \end{theo}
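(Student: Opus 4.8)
The plan is to prove the first displayed inclusion and then to deduce the statement about $SKu'(L/K)$ from it, the latter being essentially a bookkeeping consequence. For the reduction I would unwind the definition $SKu'(L/K)=\fA_{S\ram\cup S_\infty}\cdot U'\cdot L(0)^\sharp$ and use the identity
$$\de_T(0)\cdot\prod_{\fp\in S_J\sm S_\infty}\nr(1-\ve_{\fp}\frob\me)\cdot L(0)^\sharp\;=\;\theta_{S_J}^T(0),$$
which merely rewrites the relation between the $S_J$- and $S_\infty$-truncated equivariant $L$-values through the reduced norm: this reduces the inclusion $SKu'(L/K)\subseteq\zeta(\mc M(G))$ to showing $\prod_{\fp\in J}\nr(N_{\ip})\cdot\theta_{S_J}^T(0)\in\zeta(\mc M(G))$ for every subset $J\subseteq S\ram$ and every finite $T$ with $Hyp(S\ram\cup S_\infty,T)$. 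Here every $G_J=G/H_J$ is monomial when $G$ is, because a quotient of a monomial group is monomial. So it remains to prove the first assertion.

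So fix $J\subseteq S\ram$ with $G_J$ monomial, $r\leq0$, and $Hyp(S\ram\cup S_\infty,T)$. First I would observe that, since $L_J$ is the maximal subextension of $L/K$ unramified outside $S_J$ and $J\cap S_J=\emptyset$, the group $H_J=\Gal(L/L_J)$ is precisely the normal subgroup of $G$ generated by the inertia subgroups at the primes in $J$; in particular $\ip\subseteq H_J$ for every $\fp\in J$. Next, for an irreducible character $\chi$ of $G$ afforded by $\rho_\chi$ on $V_\chi$, the matrix $\rho_\chi(N_{\ip})$ is $|\ip|$ times the projector of $V_\chi$ onto $V_\chi^{\ip}$, so the $\chi$-component of $\nr(N_{\ip})$ equals $|\ip|^{\chi(1)}$ if $\ip\subseteq\ker\chi$ and $0$ otherwise. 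Hence $\prod_{\fp\in J}\nr(N_{\ip})$ vanishes on every component $\chi$ with $H_J\not\subseteq\ker\chi$, and consequently $x:=\prod_{\fp\in J}\nr(N_{\ip})\cdot\theta_{S_J}^T(r)$ — which lies in $\zeta(\Q G)$, the factor $\theta_{S_J}^T(r)$ being rational by Siegel's theorem — satisfies $x=\ve_{H_J}x$ with $\ve_{H_J}=|H_J|\me N_{H_J}$; that is, $x$ lies in the block $\ve_{H_J}\zeta(\Q G)$ that inflation identifies with $\zeta(\Q G_J)$.

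On that block the whole situation descends to the subextension $L_J/K$. Writing $\pi\colon\Q G\onto\Q G_J$ for the projection, one has $\rho_\chi=\rho_{\ol\chi}\circ\pi$ whenever $\chi=\infl\ol\chi$ with $\ol\chi\in\irr(G_J)$, so the $\chi$-component of $\nr_G(a)$ is the $\ol\chi$-component of $\nr_{G_J}(\pi(a))$; and by the inflation invariance of Artin $L$-functions — the ``general properties of $L$-functions'' already invoked in the proof of Theorem \ref{Stickelberger-int-II} — the element $\theta_{S_J}^T(L/K,r)$ corresponds on this block to $\theta_{S_J}^T(L_J/K,r)$. Since $\pi(N_{\ip})=|\ip|\cdot1_{G_J}$ for $\fp\in J$, it follows that $x$ corresponds on the block to $\prod_{\fp\in J}\nr_{G_J}(|\ip|\cdot1_{G_J})\cdot\theta_{S_J}^T(L_J/K,r)$. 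I would then check that $Hyp(S\ram\cup S_\infty,T)$ for $L/K$ forces $Hyp(S_J,T)$ for $L_J/K$: conditions (i) and (ii) because $S\ram(L_J/K)\subseteq S_J\subseteq S\ram\cup S_\infty$, and the torsionfreeness condition because $\mu_{L_J}\subseteq\mu_L$ and a root of unity of $L_J$ that is $\equiv1$ modulo every prime in $T(L_J)$ is a fortiori $\equiv1$ modulo every prime in $T(L)$. Theorem \ref{Stickelberger-int-II}, applied to the monomial group $G_J$ with trivial abelian factor, then gives $\theta_{S_J}^T(L_J/K,r)\in\zeta(\mc M(G_J))$; as $\nr_{G_J}(|\ip|\cdot1_{G_J})$ has integral components, the product above still lies in $\zeta(\mc M(G_J))$. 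Finally, inflation carries $\zeta(\mc M(G_J))$ onto $\zeta(\ve_{H_J}\mc M(G))\subseteq\zeta(\mc M(G))$ — both sides are the integral closure of $\Z$ in the respective algebra, and a ring isomorphism preserves integrality — so $x\in\zeta(\mc M(G))$, which is what is needed.

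The substance of all this is the reduction to the monomial-times-abelian case already treated in Theorem \ref{Stickelberger-int-II}; I do not anticipate a genuine obstacle. The two steps that want a little care are the identification of $H_J$ with the group generated by the relevant inertia subgroups, together with the resulting vanishing of $\prod_{\fp\in J}\nr(N_{\ip})$ off the inflated block, and the descent of the hypothesis $Hyp(S\ram\cup S_\infty,T)$ to $Hyp(S_J,T)$ for $L_J/K$; both should be routine.
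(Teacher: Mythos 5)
Your argument is correct and follows the paper's proof in all essentials: both use the central idempotent $\ve_{H_J}=|H_J|^{-1}N_{H_J}\in\mc M(G)$, the observation that the $\chi$-components of $\prod_{\fp\in J}\nr(N_{\ip})$ vanish whenever $H_J\not\subseteq\ker\chi$, the descent of $Hyp$ to $L_J/K$, and the application of Theorem~\ref{Stickelberger-int-II} (with trivial $C$) to the monomial quotient $G_J$. The only difference is that you supply more detail at each step (identification of $H_J$ with the subgroup generated by the inertia groups, verification of $Hyp(S_J,T)$ for $L_J/K$, uniqueness of $\zeta(\mc M(G_J))$ as the integral closure of $\Z$), plus a small cosmetic slip: in the identity relating $SKu'$ to $\theta_{S_J}^T$, the factor $\nr(1-\ve_{\fp}\frob\me)$ should carry a $^\sharp$, which is harmless since $\zeta(\mc M(G))$ is stable under $^\sharp$.
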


    \begin{proof}
    Since $H_J$ is normal in $G$, the idempotent $|H_J|\me N_{H_J}$ is central in $\Q G$ and lies in $\mc M(G)$.
    If $\chi$ is an irreducible character of $G$, the $\chi$-component of $\prod_{\fp \in J} \nr(N_{\ip})$ is zero
    if $H_J$ is not contained in the kernel of $\chi$. Hence we have an equality
    \beq \label{aux-equ}
        \prod_{\fp \in J} \nr(N_{\ip}) \cdot \theta_{S_J}^T(L/K,r) = \prod_{\fp \in J} \nr(N_{\ip}) \cdot |H_J|\me N_{H_J} \cdot \ti\theta_{S_J}^T(L_J/K,r),
    \eeq
    where $\ti\theta_{S_J}^T(L_J/K,r)$ denotes any lift of $\theta_{S_J}^T(L_J/K,r)$ in $\zeta(\mc M(G))$; note that this is possible, since
    $\theta_{S_J}^T(L_J/K,r)$ lies in $\zeta(\mc M(G_J))$ by Theorem \ref{Stickelberger-int-II} as $Hyp(S_J,T)$ is satisfied for $L_J/K$. Hence the righthand
    side of the above equation also lies in $\zeta(\mc M(G))$. The second part of the theorem is clear by the definition of $SKu'(L/K)$
    and the fact that each quotient of a monomial group is again monomial.
    \end{proof}

%

    \begin{cor}
    Let $L/K$ be an abelian Galois extension of number fields with Galois group $G$ and let $J$ be a subset of $S\ram$. Then
    $$\prod_{\fp \in J} \nr(N_{\ip}) \cdot \theta_{S_J}^T(r) \in \zg,$$
    whenever $r \leq 0$ and $Hyp(S\ram \cup S_{\infty}, T)$ is satisfied.
    \end{cor}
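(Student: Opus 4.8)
The plan is to run the argument of Theorem~\ref{SKu-MaxOrd-II} in the abelian setting, where two things simplify: the reduced norm is the identity on $\Q G$, and the classical results on abelian $L$-values (Theorems~\ref{BCDR-theo-0} and~\ref{BCDR-theo-r}) place the Stickelberger elements of the relevant quotient extension directly in the \emph{integral} group ring, not merely in a maximal order. First I would observe that $H_J = \Gal(L/L_J)$ is the subgroup of $G$ generated by the inertia groups $\ip$ with $\fp \in J$ --- this is precisely the defining property of $L_J$ as the maximal subextension unramified outside $S_J$ --- and that, since $G$ is abelian, $e_J := |H_J|\me N_{H_J}$ is the central idempotent $\sum_{H_J \subseteq \ker \chi} e_{\chi}$ of $\Q G$. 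For an irreducible character $\chi$ of $G$, the $\chi$-component of $\prod_{\fp \in J} N_{\ip}$ is nonzero only when $\chi$ restricts trivially to every $\ip$ with $\fp \in J$, i.e.\ only when $H_J \subseteq \ker \chi$; since $e_J$ acts as the identity on exactly these components, one gets $\prod_{\fp \in J} N_{\ip} \cdot e_J = \prod_{\fp \in J} N_{\ip}$.

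Next I would verify that the pair $(S_J,T)$ satisfies $Hyp(S_J,T)$ for the extension $L_J/K$: part (i) is immediate because $L_J/K$ is unramified outside $S_J$ and $S_\infty \subseteq S_J$, while parts (ii) and (iii) follow from $Hyp(S\ram \cup S_\infty,T)$ together with the inclusion $S_J \subseteq S\ram \cup S_\infty$, using that the relevant unit group for $L_J$ embeds into the torsionfree unit group attached to $L$. Since $G_J = G/H_J$ is abelian we have $\mc I(G_J) = \zeta(\Z G_J) = \Z G_J$, so Theorem~\ref{BCDR-theo-0} --- via the Remark that realizes $\theta_{S_J}^T(0)$ inside $SKu(L_J/K,S_J)$ --- and Theorem~\ref{BCDR-theo-r} together yield $\theta_{S_J}^T(L_J/K,r) \in \Z G_J$ for every $r \leq 0$.

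Finally I would transport this back to $L/K$. By the inflation invariance of Artin $L$-functions and of the factors $\de_T$, the element $e_J \cdot \theta_{S_J}^T(L/K,r)$ is the inflation of $\theta_{S_J}^T(L_J/K,r)$, that is, its image under the section of $\pi : \Q G \onto \Q G_J$ whose image is $e_J \Q G$; since $\pi : \Z G \onto \Z G_J$ is surjective, this inflation can be written as $e_J z$ with $z \in \Z G$ and $\pi(z) = \theta_{S_J}^T(L_J/K,r)$. Combining with the first paragraph,
$$\prod_{\fp \in J} N_{\ip} \cdot \theta_{S_J}^T(L/K,r) = \prod_{\fp \in J} N_{\ip} \cdot e_J \cdot \theta_{S_J}^T(L/K,r) = \Big( \prod_{\fp \in J} N_{\ip} \Big) z \in \Z G.$$
I do not expect a serious obstacle here. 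The one genuinely delicate point is that, unlike in the reduction step of Proposition~\ref{reduction-step}, the components of $\theta_{S_J}^T(L/K,r)$ at characters $\chi$ with $H_J \not\subseteq \ker \chi$ need not vanish, which is exactly why the statement carries the factor $\prod_{\fp \in J} N_{\ip}$ and why one must multiply by it rather than by $e_J$ alone; the remaining work is the bookkeeping that $Hyp(S_J,T)$ holds for $L_J/K$ so that the abelian integrality theorems apply, and the observation --- special to the abelian case --- that those theorems give membership in $\Z G_J$ rather than only in $\zeta(\mc M(G_J))$.
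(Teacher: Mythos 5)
Your argument is correct and is essentially the paper's proof: the paper specializes equation~(\ref{aux-equ}) to the abelian case, where $\prod_{\fp\in J} N_{\ip}\cdot|H_J|^{-1}N_{H_J}=z\,N_{H_J}$ with $z=|H_J|^{-1}\prod_{\fp\in J}|\ip|\in\Z$, and then invokes $\theta_{S_J}^T(L_J/K,r)\in\Z G_J$ via Theorem~\ref{BCDR-theo-r}. Your idempotent manipulation $\prod N_{\ip}\cdot e_J=\prod N_{\ip}$ together with the inflation identity $e_J\theta_{S_J}^T(L/K,r)=e_J z$ is just a rephrasing of the same computation.
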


    \begin{proof}
    If $G$ is abelian, the righthand side of equation (\ref{aux-equ}) equals
    $$\prod_{\fp \in J} N_{\ip} \cdot |H_J|\me N_{H_J} \cdot \ti\theta_{S_J}^T(L_J/K,r) = z \cdot  N_{H_J} \cdot \ti\theta_{S_J}^T(L_J/K,r),$$
    where $z = |H_J|\me \cdot \prod_{\fp \in J} |\ip|$ is an integer. The assertion follows, since
    $\theta_{S_J}^T(L_J/K,r)$ lies in $\Z G_J$ by Theorem \ref{BCDR-theo-0} and Theorem \ref{BCDR-theo-r}.
    \end{proof}

    \begin{rem}
    These results may tempt us to state a conjecture in complete analogy to Conjecture \ref{integrality-conj} also in the case $r<0$.
    We have not done so, since the author is not aware of a convincing reason, why this should be true in general.
    \end{rem}

    \subsection{An example: Frobenius groups} \label{sec:example}
    We recall the definition and some basic facts about Frobenius groups and then use them to
    provide many examples, where we can show that Stickelberger elements indeed lie in the integrality ring.

\begin{defi}
A \textbf{Frobenius group} is a finite group $G$ with a proper nontrivial subgroup $H$
such that $H \cap gHg^{-1}=\{ 1 \}$ for all $g \in G-H$,
in which case $H$ is called a \textbf{Frobenius complement}.
\end{defi}

\begin{theo}\label{thm:frob-kernel}
A Frobenius group $G$ contains a unique normal subgroup $N$, known as the Frobenius kernel, such that
$G$ is a semidirect product $N \rtimes H$. Furthermore:
\begin{enumerate}[(i)]
\item $|N|$ and $[G:N]=|H|$ are relatively prime.
\item The Frobenius kernel $N$ is nilpotent.
\item If $\chi \in \irr(G)$ such that  $N \not \leq \ker \chi$ then $\chi= \mathrm{Ind}_{N}^{G}(\psi)$ for some $1 \neq \psi \in \irr(N)$.
\end{enumerate}
\end{theo}

\begin{proof}
For (i) and (iii) see \cite[\S 14A]{CR-I}.
For (ii) see \cite[10.5.6]{Robinson}.
\end{proof}

\begin{cor} \label{cor:monomial-Frobenius}
Suppose that $G \simeq N \rtimes H$ is a Frobenius group with monomial Frobenius complement $H$.
Then $G$ is also monomial.
\end{cor}

\begin{proof}
Let $\chi \in \irr(G)$.
If $N \leq \ker \chi$ then $\chi$ is inflated from some $\varphi \in \irr(G/N)$.
Otherwise $N \nleq \ker \chi$ and so $\chi$ is induced
from some $\psi \in \irr(N)$ by Theorem \ref{thm:frob-kernel}(iii).
The Frobenius complement $H \simeq G/N$ is monomial by assumption, and
the Frobenius kernel $N$ is nilpotent by Theorem \ref{thm:frob-kernel}(ii) and thus is monomial.
Therefore in either case transitivity of induction shows that $\chi$ is induced from a linear character.
\end{proof}

The following terminology has been introduced in \cite{hybrid-ETNC}.
\begin{defi}
Let $\mc{M}_{p}(G)$ be a maximal $\Z_{p}$-order
such that $\Z_{p}[G] \subseteq \mc{M}_{p}(G) \subseteq \Q_{p}[G]$
and let $N$ be a normal subgroup of $G$.
Define the $N$-\textbf{hybrid order} of $\Z_{p}[G]$ and $\mc{M}_{p}(G)$ to be
$\mc{M}_{p}(G,N)=\Z_{p}[G] \ve_{N} \oplus \mc{M}_{p}(G)(1-\ve_{N})$.
We say that $\Z_{p}[G]$ is $N$-\textbf{hybrid} if $\Z_{p}[G] =  \mc{M}_{p}(G,N)$
for some choice of $\mc{M}_{p}(G)$.
\end{defi}

\begin{theo}\label{thm:Frobenius-integrality}
Let $L/K$ be a finite Galois extension of number fields  with $\Gal(L/K) \simeq G \times A$, where
$G \simeq N \rtimes H$ is a Frobenius group and $A$ is abelian.
Suppose that the Frobenius complement $H$ is abelian.
Then for every prime $p \nmid |N|$ and for every $(p,r)$-admissible sets $S$ and $T$ we have
    \[
        \theta_{S}^{T}(r) \in \mathcal{I}_{p}(G \times A) = \zeta(\Z_{p}[G \times A]).
    \]
\end{theo}

\begin{proof}
We first observe that $(G \times A) / N \simeq H \times A$ is abelian.
Thus $N$ contains the commutator subgroup $G'$ of $G \times A$ and so $|G'|$ is not divisible by $p$.
Hence $\mathcal{I}_{p}(G \times A) = \zeta(\Z_{p}[G \times A])$ by Proposition \ref{prop:best-denominators}.
As $H$ is abelian, it is monomial and hence $G$ is also monomial by Corollary \ref{cor:monomial-Frobenius}.
Thus Theorem \ref{Stickelberger-int-II} implies that
\begin{equation}\label{eq:theta-contained}
        \theta_{S}^{T}(r) \in \zeta(\mc{M}_{p}(G))[A].
\end{equation}
The group ring $\Z_{p}[G]$ is $N$-hybrid by \cite[Proposition 2.13]{hybrid-ETNC} and so
there is a ring isomorphism
\begin{equation}\label{eq:ring-isomorphism}
        \zeta(\Z_{p}[G \times A]) \simeq \Z_{p}[G/N \times A] \oplus (1 - \ve_N)\zeta(\mc{M}_{p}(G))[A].
 \end{equation}
If we compare (\ref{eq:theta-contained}) and (\ref{eq:ring-isomorphism}), we see that it suffices to show
that $\theta_{S}^{T}(r) \ve_{N}$ belongs to $\zeta(\Z_{p}[G \times A])\ve_{N} \simeq \Z_{p}[G/N \times A]$.
However, $G/N \times A$ is abelian and $\theta_{S}^{T}(r) \ve_{N}$ naturally identifies with the corresponding Stickelberger element attached to the
(abelian) subextension $L^{N}/K$.
Hence the result now follows from Theorem \ref{Stickelberger-int-II} with $C = G/N \times A$ and $H=1$.
\end{proof}

For an odd prime $l$ we denote by $D_{2l}$ the dihedral group of order $2l$.
If $q = l^n$ is a prime power, we let $\Aff(q)$ be the group of affine transformations on $\F_q$, the finite field with $q$ elements.
Hence $\Aff(q)$ is isomorphic to a semidirect product $\F_q \rtimes \F_q\mal$ with the natural action.
In particular, we have $\Aff(3) \simeq D_6 \simeq S_3$, the symmetric group on three letters.
Note that $D_{2l}$ and $\Aff(q)$ are Frobenius groups, and that in both cases
the Frobenius kernel coincides with the commutator subgroup.

\begin{cor}
Let $l$ be an odd prime.
Let $L/K$ be a finite Galois extension of number fields with $\Gal(L/K) \simeq G \times A$, where $A$ is abelian and $G$ is isomorphic to either $D_{2l}$ or $\Aff(q)$ where $q=l^{n}$ for some $n$.
Then for every $(p,r)$-admissible sets $S$ and $T$ we have
\[
    \theta_S^T(r) \in \mc I(G \times A).
\]
\end{cor}

\begin{proof}
If $p \neq l$ is a prime, then $\theta_S^T(r) \in \mc I_p(G \times A)$ follows from Theorem \ref{thm:Frobenius-integrality}.
If $p=l$, then by \cite[Proposition 6.7]{hybrid-ETNC} (if $G \simeq \Aff(q)$) and \cite[Proposition 6.9]{hybrid-ETNC} (if $G \simeq D_{2l}$)
we have $\mathcal{I}_{p}(G) =\zeta(\mc{M}_{p}(G))$.
Then Corollary \ref{cor:I-under-abelian-product} implies that
$\mathcal{I}_{p}(G \times A) =\zeta(\mc{M}_{p}(G))[A]$ and so the result follows from
Theorem \ref{Stickelberger-int-II}.
\end{proof}

    \section{The ETNC in almost tame extensions} \label{sec:ETNC-almost}

    \subsection{The conjecture}
    Let us fix a finite Galois extension $L/K$ of number fields with Galois group $G$ and a finite
    set $S$ of places of $K$ which contains $S_{\ram} \cup S_{\infty}$.
    Let $\De S$ be the kernel of the augmentation map
    $\Z S(L) \twoheadrightarrow \Z$ which maps each $\fP \in S(L)$ to $1$ and let
    $$\la_S: \R \otimes E_S \to \R \otimes \De S, \quad u \mapsto - \sum_{\fP \in S(L)} \log |u|_{\fP} \fP$$
    be the negative of the usual Dirichlet map. Note that $\la_S$ is in fact an isomorphism of $\R G$-modules.
    Furthermore, let $\tau_S \in \Ext_G^2(\De S, E_S)$ be Tate's canonical class (cf.~\cite{Tate-tori});
    then $\tau_S$ is given by a $2$-extension
    $$E_S \into A \rightarrow B \twoheadrightarrow \De S,$$
    where $A$ and $B$ are finitely generated cohomologically trivial $\Z G$-modules. Thus we may view
    $A \to B$ as a perfect complex with $A$ placed in degree $0$ and the pair $(A \to B, \la_S\me)$
    is a trivialized complex.
    In \cite{Burns_equivariantI} the author defines the following element of $K_0(\zg,\R)$:
    $$T\Om(L/K,0) := \psi_G^{\ast}(\chi_{\zg,\R}(A \to B, \la_S\me) + \hat\partial_G(L_S^{\ast}(0)^{\sharp})).$$
    Here, $\psi_G^{\ast}$ is a certain involution on $K_0(\zg,\R)$ which is not important for our purposes, since we
    will be only interested in the nullity of $T\Om(L/K,0)$.
    In fact, the ETNC for the motive $h^0(\Spec(L))$ with coefficients in $\zg$ in this context simply asserts the following.

    \begin{con}
    The element $T\Om(L/K,0) \in K_0(\zg,\R)$ is zero.
    \end{con}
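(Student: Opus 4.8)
The displayed statement is the equivariant Tamagawa number conjecture (ETNC) for the motive $h^{0}(\Spec(L))(0)$, which in full generality is open; the plan is therefore to prove the cases made accessible by {\S}1--{\S}4, namely the minus $p$-part for a Galois CM-extension $L/K$ and a non-exceptional odd prime $p$, conditional only on the vanishing of the Iwasawa $\mu$-invariant attached to $L$ and $p$ (this is Theorem~\ref{ETNC-minus}). First I would reduce to this situation. As noted at the start of {\S}3, if $K$ is not totally real then $\theta_{S_{\infty}}(0) = L(0)^{\sharp} = 0$ and $T\Om(L/K,0)$ vanishes trivially, so we may take $K$ totally real and, since $L/K$ is assumed CM, $L$ totally complex with central complex conjugation $j$. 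After inverting $2$, the ring $\zp G$ splits along the idempotent $\frac{1-j}{2}$, so the conjecture at the odd prime $p$ decomposes into a plus and a minus part and only the latter is at stake. Unwinding the definition, the minus part of $T\Om(L/K,0)$ is, up to the involution $\psi_{G}^{\ast}$, the sum of $\hat\partial_{G}(L_{S}^{\ast}(0)^{\sharp})^{-}$ and the minus part of the refined Euler characteristic $\chi_{\zg,\R}(A \to B, \la_{S}^{-1})$, where $E_{S} \into A \to B \onto \De S$ is Tate's $2$-extension; on the minus part the arithmetic modules are controlled by the minus part $A_{L}^{T}(p)$ of the ray class group and the minus $S$-units, and the analytic term by the Stickelberger element $\theta_{S}^{T}(0)$.

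The engine of the argument is the implication, essentially from the author's earlier work \cite{ich-tameII}, that the equivariant Iwasawa main conjecture (EIMC) for the totally real tower $L_{\infty}^{+}/K$, together with the vanishing of $\mu$, yields the minus $p$-part of the ETNC for $L/K$. The EIMC is available — up to its uniqueness statement, which is irrelevant here — by Ritter--Weiss \cite{EIMC-theorem} and, for $p$-adic Lie groups of arbitrary dimension, by Kakde \cite{Kakde-mc} and Burns \cite{Burns-mc}, precisely under the hypothesis $\mu = 0$; that hypothesis also forces the Iwasawa modules in play to have no nonzero finite submodule, which makes the descent from the tower down to the finite level $L$ exact. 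Concretely, put $\mc G = \Gal(L_{\infty}^{+}/K)$, a compact $p$-adic Lie group of dimension one: one takes the EIMC identity relating the equivariant $p$-adic $L$-function to the non-commutative Fitting invariant (in the sense of {\S}1.4) of the standard Iwasawa module, specialises it at the trivial character of $\Gamma = \Gal(L_{\infty}^{+}/L^{+})$, and transports it to the minus side via the usual reflection mechanism (through the $\zp$-extension $\mc L$ of $L(\zeta_{p})$ and the module $X_{\std}$). The outcome is a Fitting-invariant identity over $\zp G_{-}$ for the Pontryagin dual of $A_{L}^{T}(p)$ with $\theta_{S}^{T}(0)$ on the analytic side, which by the cohomological description of Tate's class $\tau_{S}$ and a comparison of refined Euler characteristics is exactly the vanishing of the minus part of $T\Om(L/K,0)$.

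The decisive point — and the reason Theorem~\ref{Stickelberger-int} is called the key — is that the descent and the non-commutative Fitting-invariant machinery of Theorem~\ref{annihilation-theo} and the module $\mc H(\La)$ only function when the equivariant $p$-adic $L$-function already lies in a good integral lattice, namely in $\zeta(\mc M_{p}(H))[[\Gamma]]$ rather than merely in the total quotient ring, where $H = \Gal(L^{+}/K)$. When $H$ is monomial, $\mc G$ has the shape ``monomial $\times$ procyclic'', and applying Theorem~\ref{Stickelberger-int-II} at the finite layers — with the abelian group $C$ running over the finite quotients of $\Gamma$ — and passing to the limit places the $p$-adic $L$-function exactly where it is needed; together with Lemma~\ref{int-lem} this yields the local form $\theta_{S}^{T}(0) \in \mc I_{p}(G)$ of Conjecture~\ref{integrality-conj}, and the descent closes up. For a general, not necessarily monomial, Galois group one instead works over the maximal order $\mc M_{p}(G)$, invoking Theorem~\ref{SKu-MaxOrd-II} — whose conclusion $\prod_{\fp \in J}\nr(N_{\ip}) \cdot \theta_{S_{J}}^{T}(0) \in \zeta(\mc M(G))$ requires only the unramified, hence abelian, hence monomial, extension $L_{S\ram}/K$ — and then descends from $\mc M_{p}(G)$ back to $\zp G$ through the $\mc H_{p}(G)$-annihilation theorem; it is here that the finitely many ``exceptional'' primes get discarded. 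I expect the genuine obstacles to be: (i) matching the non-commutative Fitting invariants and the extended boundary $\hat\partial_{G}$ produced by the EIMC descent with Burns' $K_{0}(\zg,\R)$-theoretic definition of $T\Om(L/K,0)$, including the effect of $\psi_{G}^{\ast}$; (ii) tracking the auxiliary sets $S$ and $T$ and the hypothesis $Hyp(S,T)$ — or, more generally, $(p,0)$-admissibility — correctly through the tower and the reductions of {\S}3; and (iii) the exceptional-prime analysis, together with the control of the denominators introduced by the norm factors $\nr(N_{\ip})$ in the non-monomial case.
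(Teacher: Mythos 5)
The statement is a conjecture — it has no proof in the paper, and you correctly note at the outset that the ETNC is open in general. You then pivot to sketching a proof of the partial result actually established here (Theorem~\ref{ETNC-minus}/\ref{ETNC-minus-II}), and your high-level picture — EIMC plus $\mu=0$ plus an integrality input of the type of Theorem~\ref{Stickelberger-int} — is aligned with the paper. But two steps are genuinely off. First, and most seriously, you omit the Greither--Wiles auxiliary-prime device entirely: the paper does not specialise the EIMC and then directly compare with $T\Om(L/K,0)_p^-$, because the Euler factor $\alpha_p = \prod_{\fp\in S_p\setminus S_1}\nr(1-\varepsilon_{\fp}\phi_{\fP}^{-1})$ that appears on passing to $\Ga_L$-coinvariants can be a zerodivisor. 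Instead one adjoins $k_r$ for an auxiliary prime $r$ chosen as in Lemma~\ref{fields-exist}, works over $L'=Lk_r$ with $G'=G\times C_N$, applies Theorem~\ref{Stickelberger-int-II} and Lemma~\ref{int-lem} to place $\theta_{S_1}^T(L'/K,0)$ in $\zeta(\mc M_p(G))[C_N]$, and then uses Lemma~\ref{nonzerolemma} to invert $\alpha'_p$ modulo $\nu$ and augment back to $L$. Without this, the Fitting-invariant inclusion $\Fitt_{\zp G_-}(A_L^T(p)) \subset [\langle\theta_{S_1}^T(0)\rangle]_{\nr(\zp G_-)}$ — which is what feeds into Theorem~\ref{ETNC_equiv} — cannot be reached; your sketch jumps straight from the EIMC to the finite-level conclusion.

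Second, your handling of a non-monomial Galois group is not what the paper does and would not work. You propose to run the argument over $\mc M_p(G)$ using Theorem~\ref{SKu-MaxOrd-II} and then descend to $\zp G$ via the $\mc H_p(G)$-annihilation machinery. The paper instead reduces the vanishing of $T\Om(L/K,0)_p^-$ to the case of $l$-elementary, hence nilpotent, hence monomial subquotients by a Brauer-type induction argument from \cite[\S 8]{GRW}, which is available precisely because the strong Stark conjecture (hence torsionness of $T\Om(L/K,0)_p^-$) is already known by \cite[Corollary~2]{ich-tame}. Your route would import the extra factor $\prod_{\fp\in J}\nr(N_{\ip})$ from Theorem~\ref{SKu-MaxOrd-II}, and the $\mc H_p(G)$-descent cannot remove it; it also misattributes the source of the exceptional primes, which come from the arithmetic conditions in Definition~\ref{defi:exceptional} (almost tameness, $p\neq 2$, $L^{\cl}\not\subset(L^{\cl})^+(\zeta_p)$), needed so that $A_L^T(p)$ is cohomologically trivial and Theorem~\ref{ETNC_equiv} applies, not from denominator control in a maximal-order descent.
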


    Note that this statement is also equivalent to the Lifted Root Number Conjecture
    formulated by Gruenberg, Ritter and Weiss \cite{GRW} (cf.~\cite[Theorem 2.3.3]{Burns_equivariantI}).
    By \cite[Theorem 2.2.4]{Burns_equivariantI} one knows that $T\Om(L/K,0)$ lies in $K_0(\zg,\Q)$ if and only if Stark's conjecture holds
    for all irreducible characters of $G$.
    In this case the ETNC decomposes into local conjectures at each prime $p$ by means of the isomorphism
    $$K_0(\zg, \Q) \simeq \bigoplus_{p \nmid \infty} K_0(\zp G, \qp).$$
    Let $T\Om(L/K,0)_p$ be the image of $T\Om(L/K,0)$ in $K_0(\zp G, \qp)$.
    If we further assume that $T\Om(L/K,0)_p$ is torsion, then it is well known (see \cite[\S 8]{GRW} for example) that
    $T\Om(L/K,0)_p$ vanishes if and only if $T\Om(L'/K',0)_p$ vanishes for all intermediate Galois extensions $L'/K'$
    whose Galois group is $p$-elementary, i.e.~$G$ is the direct product of a $p$-group and a cyclic group of order prime to $p$.
    We will prove an analogous result on minus parts in the next paragraph.

    \subsection{A reduction step on minus parts}
    Let $L/K$ be a Galois CM-extension with Galois group $G$ and let $p$ be an odd prime.
    Let $j \in G$ denote complex conjugation.
    There are canonical isomorphisms
    \begin{eqnarray*}
      K_0(\zpg, \qp) & \simeq & K_0(\zpg_+, \qp) \op K_0(\zpg_-, \qp)\\
      DT(\zpg) & \simeq & DT(\zpg_+) \op DT(\zpg_-).
    \end{eqnarray*}
    Moreover, we naturally have $\zpg_+ \simeq \zp G^+$, where $G^+ := G / \langle j \rangle$ is the Galois group
    of the extension $L^+ / K$ of totally real fields.

    Since Stark's conjecture is known for odd characters \cite[Theorem 1.2, p.~70]{Tate-Stark}, the element $T\Om(L/K,0)$ has
    a well defined image $T\Om(L/K,0)_p^-$ in $K_0(\zpg_-, \qp)$. In the proof of the following proposition we will also view $T\Om(L/K,0)_p^-$
    as an element in $K_0(\zpg, \qp)$ or rather $DT(\zpg)$ by requiring that its plus part is trivial.

    \begin{prop} \label{prop:ETNC-reduction}
      Let $L/K$ be a Galois CM-extension with Galois group $G$ and let $p$ be an odd prime.
      Assume that $T\Om(L/K,0)_p^-$ belongs to $DT(\zpg_-)$.
      Then $T\Om(L/K,0)_p^-$ vanishes if and only if $T\Om(L'/K',0)_p^-$ vanishes for all intermediate
      Galois CM-extensions $L'/K'$ whose Galois group is either $p$-elementary or a direct product of
      a $p$-elementary group and a cyclic group of order $2$ (generated by $j$).
    \end{prop}

    \begin{proof}
      Let $L'/K'$ be an \emph{arbitrary} intermediate Galois extension of $L/K$ with Galois group $H$. Then there are subgroups $G_1$ and $G_2$
      of $G$ with $G_2$ normal in $G_1$ such that $H \simeq G_1 / G_2$. There are canonical restriction and quotient maps
      \[
        DT(\zpg) \stackrel{\res^G_{G_1}}{\lto} DT(\zp G_1) \stackrel{\quot^{G_1}_H}{\lto} DT(\zp H).
      \]
      We denote the image of $T\Om(L/K,0)_p^-$ in $DT(\zp H)$ by $w_H$. Functoriality of $T\Om(L/K,0)$ \cite[Proposition 2.1.4]{Burns_equivariantI}
      implies that $w_H = T\Om(L'/K',0)_p^-$ whenever $L'/K'$ is a CM-extension. If $w_G = T\Om(L/K,0)_p^-$ vanishes,
      then clearly $w_H = 0$ for all subquotients $H$ of $G$.
      Conversely, \cite[Proposition 9]{GRW} says that $w_G = 0$ if $w_H = 0$ whenever $H$ is a $p$-elementary group.
      Fix such a $p$-elementary subquotient $H$. If $j$ lies in $G_2$, then $L'$ is totally real
      and $w_H = \quot^{G_1^+}_H (\res^{G^+}_{G_1^+} w_{G^+})$, where $G_1^+ := G_1 / \langle j \rangle$.
      However, $w_{G^+}$ is trivial and thus also $w_H = 0$. Hence we may assume that $j \not\in G_2$
      so that $L'$ is a CM-field. If $j$ lies in $G_1$, then $K'$ is totally real and $w_H = 0$ by assumption. If $j \not\in G_1$,
      we let $\ti G_1$ be the minimal subgroup of $G$ that contains $G_1$ and $j$. As $j$ is central in $G$, we have
      $\ti G_1 = G_1 \times \langle j \rangle$ and $G_2$ is still normal in $\ti G_1$. Then $\ti H := \ti G_1 / G_2 \simeq H \times \langle j \rangle$
      is a subquotient of $G$ that corresponds to an intermediate CM-extension. Moreover, we have $w_{\ti H} = 0$ by assumption
      and thus $w_H = \res^{\ti H}_H (w_{\ti H}) = 0$ as desired.
    \end{proof}

    \subsection{A reformulation in terms of Fitting invariants}
    We will say that the CM-extension $L/K$ is {\it almost tame} above $p$ if $j$ lies in $\gp$ for every prime
    $\fp$ of $K$ above $p$ which is wildly ramified in $L/K$.\\

    We have the following relation to the integrality conjecture \ref{integrality-conj}
    (cf.~\cite[proof of Theorem 5.1 and Corollary 5.6]{ich-stark}):
    \begin{theo} \label{ETNC_implies_int}
        Let $p$ be an odd prime and let $L/K$ be a Galois CM-extension. Assume that $T\Om(L/K,0)_p^-$ vanishes.
        If the $p$-part of the roots of unity
        of $L$ is a cohomologically trivial $G$-module or if $L/K$ is almost tame above $p$, then the $p$-part of
        Conjecture \ref{integrality-conj} holds, i.e.~$SKu_p(L/K) \subseteq \mc I_p(G)$.
    \end{theo}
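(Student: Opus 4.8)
The plan is to deduce the containment from the defining relation of $T\Om(L/K,0)$: I would convert the vanishing of its minus $p$-part into an equality of non-commutative Fitting invariants, and then exhibit every generator of $SKu_p(L/K)$ as a reduced norm occurring in such an invariant. The guiding observation is that any Fitting invariant over an $\fo$-order $\La$ is by construction $\nr(\La)$-generated, hence contained in $\mc I(\La)$; so it will be enough to place the generators of $SKu_p(L/K)$ inside the Fitting invariant of a suitable perfect complex over $\zp G_-$. Since $p$ is odd one reduces to the minus part at once: $\zp G=\zp G_+\times\zp G_-$ compatibly with the formation of centres, reduced norms and the modules $\mc I_p(-)$, and $L(0)^{\sharp}$ is supported on odd characters, so $SKu_p(L/K)\subset\zeta(\qp G_-)$ and one only has to prove $SKu_p(L/K)\subset\mc I_p(\zp G_-)\subset\mc I_p(G)$.

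First I would set up the arithmetic complex. Fix $S=S\ram\cup S_{\infty}$ and a finite set $T$ with $Hyp(S,T)$. Tate's canonical class, in its $T$-modified form, provides a perfect complex $C_T$ of $\zg$-modules concentrated in degrees $0$ and $1$ with $H^0(C_T)=E_S^T$ and $H^1(C_T)$ an extension of $\De S$ by $\cl_L^T$, trivialised rationally by the negative Dirichlet map $\la_S$; its minus part, completed at $p$, is a perfect complex $C_T^-(p)$ over $\zp G_-$. Because $L$ is a CM-field the minus part of the global unit group has trivial $\Q$-rank, so $\la_S$ identifies the summands of $H^0$ and $H^1$ coming from the finite places $\fp\in S\sm S_{\infty}$, and cancelling them leaves a two-term complex whose only non-trivial cohomology is $A_L^T(p)$ in degree $1$ together with a quotient of $\mu_L(p)$ (with $j$ acting by inversion) in degree $0$. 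The point I want to stress is that this cancellation is performed prime by prime, and removing the place $\fp$ contributes exactly the local cohomology complex at $\fp$, whose presentation involves $N_{\ip}$ and $1-\ve_{\fp}\frob\me$; this is the structural origin of the local factor $U$ in the definition of $SKu_p(L/K)$, and it shows that multiplying the Fitting datum of $C_T^-(p)$ by $\prod_{\fp\in S\sm S_{\infty}}\nr(u_{\fp})$ with $u_{\fp}\in U_{\fp}$ keeps one inside the Fitting invariant of a perfect complex over $\zp G_-$.

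Next, the hypothesis $T\Om(L/K,0)_p^-=0$, unravelled through the definition $T\Om(L/K,0)=\psi_G^{\ast}(\chi_{\zg,\R}(A\to B,\la_S\me)+\hat\partial_G(L_S^{\ast}(0)^{\sharp}))$ and the basic properties of the extended boundary homomorphism $\hat\partial_G$, identifies the refined Euler characteristic of $C_T^-(p)$ (with the trivialisation induced by $\la_S$) with $-\hat\partial_G$ of the minus $p$-part of $L_S^{\ast}(0)^{\sharp}$. On the minus part the leading term is the value, and after the cancellation above together with a standard descent of the refined Euler characteristic to an equality of Fitting invariants, this yields, for every $T$ with $Hyp(S,T)$ and every choice $u_{\fp}\in U_{\fp}$,
$$\de_T(0)\cdot\prod_{\fp\in S\sm S_{\infty}}\nr(u_{\fp})\cdot L(0)^{\sharp}\ \in\ \Fitt_{\zp G_-}^{\max}\big((A_L^T(p))^{\vee}\big)\ \subset\ \mc I_p(\zp G_-).$$
Since $\de_T(0)$ runs through a generating set of $\fA_S$ and $\mc I_p(\zp G_-)$ is a $\zeta(\zp G)$-module, while $SKu_p(L/K)=\fA_S\cdot U\cdot L(0)^{\sharp}$ is exactly the $\zeta(\zp G)$-module generated by these left-hand sides, one obtains $SKu_p(L/K)\subset\mc I_p(\zp G_-)\subset\mc I_p(G)$ as claimed.

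The hard part, and the only place where the extra hypothesis enters, is the reduction of $C_T^-(p)$ to a two-term complex with $A_L^T(p)$ placed in a single degree and a \emph{well-behaved} Fitting invariant --- equivalently, controlling the degree-$0$ contribution, which is a quotient of $\mu_L(p)$. If $\mu_L(p)$ is a cohomologically trivial $G$-module it admits a length-one projective resolution, so the complex splits off this term and the descent to an equality of Fitting invariants is immediate. If $\mu_L(p)$ is not cohomologically trivial, its failure of cohomological triviality is localised at the wildly ramified primes $\fp\mid p$, and the assumption that $L/K$ is almost tame above $p$ --- that is, $j\in\gp$ for each such $\fp$ --- forces the corresponding local term to become cohomologically trivial in the minus part (the inertial idempotent $\ve_{\fp}$ is annihilated by $1-j$ in the relevant position), which restores the splitting. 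Making this local analysis precise at the wildly ramified primes above $p$, and checking its compatibility with the place-by-place cancellation of the finite primes of $S$, is, I expect, the technical heart of the argument; the remainder is formal manipulation with refined Euler characteristics and reduced norms.
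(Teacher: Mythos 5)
The paper does not actually supply a proof of Theorem~\ref{ETNC_implies_int}; it is stated with the parenthetical reference ``(cf.~[proof of Theorem 5.1 and Corollary 5.6]{ich-stark})'' and nothing more, so there is no in-paper argument to measure your reconstruction against. Your outline is in the right spirit --- pass to the minus part at the odd prime $p$, translate the vanishing of $T\Om(L/K,0)_p^-$ into control of the Fitting invariant of an arithmetic module attached to the $T$-modified Tate sequence, and then observe that any non-commutative Fitting invariant is by construction contained in $\mc I_p(G)$ --- and this is very likely the skeleton of the cited argument, as it is also the mechanism visible in Theorem~\ref{ETNC_equiv} of this paper.

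There are, however, three places where the sketch is doing more work than it acknowledges. First, the assertion that ``multiplying the Fitting datum of $C_T^-(p)$ by $\prod_{\fp}\nr(u_{\fp})$ with $u_{\fp}\in U_{\fp}$ keeps one inside the Fitting invariant of a perfect complex'' is exactly the crux and is not established by the cancellation picture alone: the ETNC directly controls $\theta_S^T(0)=\de_T(0)\prod_{\fp}\nr(1-\ve_{\fp}\frob\me)\,L(0)^{\sharp}$, i.e.~the single choice $u_{\fp}=1-\ve_{\fp}\frob\me$, and the passage to arbitrary $\Z\gp$-combinations of $N_{\ip}$ and $1-\ve_{\fp}\frob\me$ (elements of $U_{\fp}$, which need not lie in $\Z\gp$ because of the denominator in $\ve_{\fp}$) requires a separate argument, typically via functoriality of Fitting invariants under the quotient maps $G\onto G/\ip$ and genuine bookkeeping with the local cohomology terms. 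Second, the ``standard descent of the refined Euler characteristic to an equality of Fitting invariants'' is not standard in the non-commutative setting; it requires the relevant module to be cohomologically trivial (so that the complex is genuinely two-term perfect) and a result like \cite[Proposition 5.4]{ich-Fitting} to convert a $K_0(\zp G_-,\qp)$-identity into a Fitting-invariant identity. Third, your explanation of the role of the almost-tame hypothesis is not the one used in this circle of ideas: almost tameness is not invoked to make $\mu_L(p)$ (or its minus part) cohomologically trivial, but rather to guarantee via \cite[Theorem 1]{ich-tame} that $A_L^T(p)$ itself is a cohomologically trivial $G$-module, which is what legitimises forming $\Fitt_{\zp G_-}(A_L^T(p))$ in the first place; the alternative hypothesis that $\mu_L(p)$ is cohomologically trivial is a genuinely distinct route to the same end and the two cases are handled by different mechanisms, not by a single ``restoring the splitting'' argument.
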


    Thus the main result of this section (Theorem \ref{ETNC-minus-II} below) may be seen as a partial converse of Theorem \ref{ETNC_implies_int}.\\

    Now let $T$ consist of a prime $\fp_0 \nmid p$ and all finite places of $K$ which ramify in $L/K$ and do not lie above $p$;
    we may choose $\fp_0$ such that $E_S^{T_{\nr}}$ is torsionfree for every finite set $S$ of places of $K$ which contains $S_{\infty}$
    and is disjoint to $T$. Of course, $T_{\nr}$ consists of the single prime $\fp_0$.
    We denote the set of all wildly ramified primes above $p$ by $S_{p,w}$ and put $S_1 := S_{p,w} \cup S_{\infty}$.
    In particular, the sets $S_1$ and $T$ are $(p,0)$-admissible.

    \begin{theo} \label{ETNC_equiv}
        Let $p$ be an odd prime and $L/K$ a Galois CM-extension which is almost tame above $p$. Then the following are equivalent:
        \ben[(i)]
            \item
            $T\Om(L/K,0)_p^- = 0$;
            \item
            $\Fitt_{\zpg_-} (A_L^T(p)) = [ \langle \theta_{S_1}^T(0) \rangle ]_{\nr(\zpg_-)}$;
            \item
            $\Fitt_{\zpg_-} (A_L^T(p)) \subseteq [ \langle \theta_{S_1}^T(0) \rangle ]_{\nr(\zpg_-)}$.
        \een
    \end{theo}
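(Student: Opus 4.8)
The plan is to express $T\Om(L/K,0)_p^-$ explicitly as $\hat\partial_G$ of a single element of $\zeta(\qpg_-)\mal$ built from a quadratic presentation of $A_L^T(p)$ and from $\theta_{S_1}^T(0)$, and then to read off the three conditions from the defining exact sequence of $\hat\partial_G$ together with the corresponding statement over a maximal order. First I would set up the relevant perfect complex. Put $S:=S_1\cup S_\infty$. After moving the tamely ramified primes above $p$ into $T$ -- which changes neither the $p$-adic image of $\theta_S^T(0)$ nor the relevant complex $p$-integrally, because for such a prime $\fp$ one has $N(\fp)\in p\zp$, so the extra Euler factor $\de_\fp(0)$ is the reduced norm of a unit of $\zpg$ and the extra congruence module $\kappa(\fP)\mal$ has trivial $p$-part -- one gets $S\cap T=\emptyset$ and $S\cup T\supseteq S\ram\cup S_\infty$. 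Then the $(S,T)$-modified Tate sequence furnishes $C_{S,T}\punkt\in\Dperf(\zg)$, concentrated in two consecutive degrees, with $H^0=E_S^T$ and $H^1=\nabla_{S,T}$ an extension of the $(S,T)$-ray class group by $\De S$; the negative $\la_S$ of the Dirichlet map gives a trivialization, and by \cite{Burns_equivariantI} together with \cite[{\S}5]{ich-stark} one has $T\Om(L/K,0)=\psi_G^{\ast}\big(\chi_{\zg,\R}(C_{S,T}\punkt,\la_S\me)+\hat\partial_G(\theta_S^T(0)^{\sharp})\big)$, the $T$-modification of Burns' original definition accounting for the passage from the leading term $L_S^{\ast}(0)^{\sharp}$ to the modified Stickelberger element.

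Next I would pass to minus-$p$-parts over $\La:=\zpg_-$ and compute cohomology. Complex conjugation $j$ lies in the decomposition group of every place of $S$ -- the archimedean ones because $L$ is a CM-field, the wildly ramified $p$-adic ones by the almost tameness hypothesis -- so, since $p$ is odd, $(\R\otimes E_S)^-=0$, hence $E_S^T(p)^-=0$ (it is torsionfree by $(p,0)$-admissibility); $\De S(p)^-=0$; and the classes of the primes of $S(L)$ are fixed by $j$, hence $2$-torsion, so $\cl_L^{S,T}(p)^-=A_L^T(p)$. Thus $C_{S,T}\punkt(p)^-$ is acyclic outside one degree with cohomology $A_L^T(p)$; in particular $\pd_{\La}A_L^T(p)\leq 1$, and since $K_0(\La)\hookrightarrow K_0(\qpg_-)$ there is a quadratic presentation $\La^n\stackrel{h}{\lto}\La^n\onto A_L^T(p)$, so $\Fitt_{\La}(A_L^T(p))=[\langle\nr(h)\rangle_{\zeta(\La)}]_{\nr(\La)}$. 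As this complex has finite cohomology, the zero trivialization is admissible and $\chi_{\La,\qp}(C_{S,T}\punkt(p)^-,0)=\hat\partial_G(\nr(h)^{\pm 1})$, the sign depending on the degree convention (cf.~\cite{Burns_Whitehead}). Finally, for every odd irreducible character $\chi$ the order of vanishing of $L_S(s,\chi)$ at $s=0$ equals $\sum_{\fp\in S_1}\dim_{\C}V_{\chi}^{\gp}=0$ (as $j\in\gp$ and $\chi$ is odd), so $L_S^{\ast}(0)^{\sharp}$ and $L_S(0)^{\sharp}$ agree on the minus part; combining this with the displayed formula, and using that $\hat\partial_G$ is a homomorphism while $\psi_G^{\ast}$ and $\sharp$ act through involutions preserving nullity, I obtain $T\Om(L/K,0)_p^-=\hat\partial_G\big(\nr(h)^{\pm 1}\cdot\theta_{S_1}^T(0)^{\sharp}\big)$ in $K_0(\La,\qp)$.

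Now the equivalences. Since $\nr$ identifies $K_1(\qpg_-)$ with $\zeta(\qpg_-)\mal$ (Hasse--Schilling--Maass and the vanishing of $SK_1$ over $\qp$) and $K_0(\La)\hookrightarrow K_0(\qpg_-)$, the localization sequence yields an exact sequence $1\to\nr(K_1(\La))\to\zeta(\qpg_-)\mal\stackrel{\hat\partial_G}{\lto}K_0(\La,\qp)\to 0$. Hence (i) holds iff $\nr(h)^{\pm 1}\theta_{S_1}^T(0)^{\sharp}\in\nr(K_1(\La))$, which is exactly the assertion that $\langle\nr(h)\rangle_{\zeta(\La)}$ and $\langle\theta_{S_1}^T(0)^{\sharp}\rangle_{\zeta(\La)}$ are $\nr(\La)$-equivalent, i.e.~(ii) -- the transpose $\sharp$ being matched either by the standard duality for Fitting invariants of modules of projective dimension $\leq 1$ over $\zpg_-$ or by working throughout with the $\La$-dual complex. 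The implication (ii)$\Rightarrow$(iii) is immediate. For (iii)$\Rightarrow$(i), the containment means $\nr(h)=\nr(U)\cdot\theta_{S_1}^T(0)^{\sharp}\cdot z$ for some $U\in\Gl_n(\La)$ and $z\in\zeta(\La)$, so $T\Om(L/K,0)_p^-=\hat\partial_G(z^{\pm 1})$, and it remains to show $z\in\zeta(\La)\mal$. Passing to a maximal order $\mc M$ of $\qpg_-$ and invoking the minus-$p$-part of the ETNC over $\mc M$ -- which is unconditional, reducing by Brauer induction to the main conjecture for abelian CM-extensions (\cite{Wiles-main},~\cite{Burns_Greither_ETNC}) -- one gets $\Fitt_{\mc M}(A_L^T(p))=[\langle\theta_{S_1}^T(0)^{\sharp}\rangle]_{\nr(\mc M)}$, i.e.~$\nr(h)=\nr(V)\cdot\theta_{S_1}^T(0)^{\sharp}\cdot w$ with $V\in\Gl_n(\mc M)$ and $w\in\zeta(\mc M)\mal$; comparing, $z=\nr(VU\me)\cdot w\in\zeta(\mc M)\mal$, and since $z\in\zeta(\La)$ with $z\me\in\zeta(\mc M)$ integral over $\zeta(\La)$, in fact $z\me\in\zeta(\La)$, whence $z\in\zeta(\La)\mal$ and $T\Om(L/K,0)_p^-=\hat\partial_G(z^{\pm 1})=0$.

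The main obstacle is the first step: pinning down the complex-theoretic description of $T\Om(L/K,0)_p^-$ and the bookkeeping around it -- the relation between Burns' original definition (via $L_S^{\ast}(0)^{\sharp}$) and the $T$-modified Stickelberger element, the two involutions $\psi_G^{\ast}$ and $\sharp$, the redistribution of the ramified primes between $S$ and $T$ -- together with the identification of the refined Euler characteristic of a finite complex over $\zpg_-$ with $\hat\partial_G$ of the reduced norm of a quadratic presentation. The rest is formal manipulation with the localization sequence and with decomposition groups.
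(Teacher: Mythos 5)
Your proof takes a genuinely different route from the paper's: where the paper treats (i)$\Leftrightarrow$(ii) as a reformulation of a theorem from the earlier paper \cite{ich-tame} and proves (iii)$\Rightarrow$(ii) by a ``size'' argument --- comparing $\prod_\chi c_\chi^{\chi(1)}$ and $\prod_\chi L_{S_1}^T(0,\check\chi)^{\chi(1)}$, both $\sim_p |A_L^T(p)|$, via \cite[Proposition~5.4]{ich-Fitting} and \cite[Proposition~4]{ich-tame} --- you re-derive (i)$\Leftrightarrow$(ii) from an explicit complex-theoretic description of $T\Om(L/K,0)_p^-$ and close (iii)$\Rightarrow$(i) by passing to a maximal order where the minus ETNC is unconditional. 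That is a legitimate alternative strategy in outline, but as carried out it has a gap.

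The gap infects both the (i)$\Leftrightarrow$(ii) step and the final (iii)$\Rightarrow$(i) step, and is the identification of $\ker\hat\partial_G$ with $\zeta(\La)^\times\cdot\nr(K_1(\La))$, where $\La=\zpg_-$. The localization sequence gives $\ker\hat\partial_G=\nr(K_1(\La))$, and for nonabelian $G$ the central units $\zeta(\La)^\times$ are \emph{not} contained in $\nr(K_1(\La))$: for a central unit $z$ the reduced norm $\nr(z)$ raises each Wedderburn component of $z$ to the power of the corresponding matrix degree, and more generally the image $\nr(\La^\times)$ is cut out by nontrivial relations among the characters (e.g.\ equalities of components forced by $\det\circ\rho$ coinciding with a linear character), so there are central units that are not reduced norms of anything in $\Gl_n(\La)$. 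The statement (ii) --- equality of $\nr(\La)$-equivalence classes $[\langle\nr(h)\rangle]=[\langle\theta_{S_1}^T(0)\rangle]$ --- amounts to $\nr(h)/\theta_{S_1}^T(0)^{\sharp}\in\zeta(\La)^\times\cdot\nr(K_1(\La))$, whereas (i) needs this quotient to lie in the smaller subgroup $\nr(K_1(\La))$. Thus ``$z\in\zeta(\La)^\times$'' does not yield $\hat\partial_G(z^{\pm 1})=0$, and your derivation of (i)$\Leftrightarrow$(ii) is incomplete for the same reason; the paper sidesteps this by appealing to \cite[Theorem~2]{ich-tame} for (i)$\Leftrightarrow$(ii) and by only proving (iii)$\Rightarrow$(ii) directly. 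A smaller issue: the justification ``$z^{-1}$ integral over $\zeta(\La)$, hence $z^{-1}\in\zeta(\La)$'' is wrong as stated because $\zeta(\zpg_-)$ is generally not integrally closed; the conclusion $z\in\zeta(\La)^\times$ is nevertheless true, but the correct reason is lying over: since $\zeta(\mc M)$ is integral over $\zeta(\La)$ every maximal ideal of $\zeta(\La)$ is contracted from one of $\zeta(\mc M)$, and $z\in\zeta(\mc M)^\times\cap\zeta(\La)$ therefore avoids them all.
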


    \begin{proof}
    First note that $A_L^T(p)$ is a cohomologically trivial $G$-module by \cite[Theorem 1]{ich-tame} such that
    the projective dimension of $A_L^T(p)$ as a $\zpg_-$-module is at most $1$.
    Thus $\Fitt_{\zpg_-} (A_L^T(p))$
    is well defined.
    That (i) and (ii) are equivalent is just a reformulation of \cite[Theorem 2]{ich-tame} in terms of Fitting invariants.
    To see this let $E$ be a finite Galois extension of $\qp$ with Galois group $\Ga$ such that every odd representation of $G$
    has a realization over $E$. Let $R_p(G)^-$ denote the subring of $R_p(G)$ generated by odd characters.
    Then remark \ref{rem:rephom-vs-fittgen} has an obvious analogue on minus parts, and in the notation of \S \ref{sec:Hom-description}
    we see that $\theta_{S_1}^T(0) \in \zeta(\qp G_-)\mal$
    corresponds to the representing homomorphism $f_{\theta_{S_1}^T(0)} \in \Hom_{\Ga}(R_p(G)^-, E\mal)$
    which on irreducible odd characters $\chi$ is given by
    \[
        f_{\theta_{S_1}^T(0)}: \chi \mapsto L_{S_1}^T(0, \check \chi).
    \]
    However, in the notation of \cite{ich-tame}
    we have $f_{\theta_{S_1}^T(0)} = \Theta_{S_1}^T$.

    Clearly (ii) implies (iii) and we are left with showing the converse.
    Let $z \in (\qp G_-)\mal$ be a generator of $\Fitt_{\zpg_-} (A_L^T(p))$ and let $f_z \in \Hom_{\Ga}(R_p(G)^-, E\mal)$
    be the corresponding representing homomorphism. Write
    $$z = \sum_{\chi} z_{\chi} e_{\chi} \in \zeta(E G_-) = \oplus_{\chi} E e_{\chi},$$
    where the sum runs through all odd irreducible ($\cp$-valued) characters of $G$.
    Then $f_z(\chi) = z_{\chi}$ for all odd irreducible $\chi$ by (\ref{eqn:f_z}), and by \cite[Proposition 5]{ich-tame} we have
    $$\prod_{\chi} z_{\chi}^{\chi(1)} \sim_p |A_L^T(p)|,$$
    where $\sim_p$ means ``equality up to a $p$-adic unit''.
    But by \cite[Proposition 4]{ich-tame}, we also have
    $$\prod_{\chi} (L^T_{S_1}(0,\check \chi))^{\chi(1)} \sim_p |A_L^T(p)|$$
    so that $z$ is also a generator of $[ \langle \theta_{S_1}^T(0) \rangle ]_{\nr(\zpg_-)}$ by \cite[Proposition 5.4]{ich-Fitting}.
    Hence (iii) implies (ii) and we are done.
    \end{proof}

    \subsection{Exceptional primes}
    For a natural number $n$ let $\zeta_n$ be a primitive $n$th root of unity and let us denote the normal closure of $L$ over $\Q$ by $L\hcl$;
    note that $L\hcl$ is again a CM-field.

    \begin{defi} \label{defi:exceptional}
    We will call a prime $p$ \textbf{exceptional} for $L/K$ if at least
    one of the following holds:
    \ben[(i)]
        \item
        $p = 2$,
        \item
        there is a prime $\fp$ in $K$ above $p$ which ramifies wildly in $L$ and $j \not\in \gp$, i.e.~$L/K$ is not almost tame above $p$,
        \item
        $L\hcl \subseteq (L\hcl)^+(\zeta_p).$
    \een
    \end{defi}

    \begin{rem} \label{finite-exceptionals}
    \ben
        \item
        Note that there are only finitely many exceptional primes, since such a prime has to ramify in $L\hcl / \Q$ or equals $2$.
        \item
        If $p$ is non-exceptional, then the negation of (ii) ensures that the $p$-minus ray class group $A_L^T(p)$ is a cohomologically trivial $G$-module.
        We have already used this in the proof of Theorem \ref{ETNC_equiv}.
        In fact, there is a second technical point, where (ii) is needed. We will indicate this in the course of the proof.
        \item
        The negation of (iii) will permit us to adjust a descent method introduced
        by Wiles \cite{Wiles-Brumer} and further developed by Greither \cite{Gr-Brumer}
        to the non-abelian situation. In particular, it is needed in the proof of Lemma \ref{fields-exist}.
    \een
    \end{rem}

    \begin{lem} \label{lem:exceptional-intermediate}
      Let $L/K$ be a Galois CM-extension and let $p$ be a prime. If $p$ is non-exceptional for $L/K$, then it is non-exceptional for every
      intermediate Galois CM-extension $L'/K'$.
    \end{lem}

    \begin{proof}
      Let $p$ be non-exceptional for $L/K$ and let $L'/K'$ be an intermediate Galois CM-extension with Galois group $H$.
      Clearly $p \not=2$ and so (i) does not hold.
      Suppose that there is a prime $\fp'$ in $K'$ above $p$ such that $\fp'$ is wildly ramified in $L'$. Then $\fp := \fp' \cap K$
      is a prime in $K$ above $p$ which is wildly ramified in $L$. Thus $j \in G_{\fP}$ for every prime $\fP$ in $L$ above $\fp$
      as $p$ is non-exceptional for $L/K$. We write $H = G_1 / G_2$, where $G_1$ and $G_2$ are subgroups of $G$ with $G_2$ normal in $G_1$.
      As $K'$ is totally real, we have $j \in G_1$ and thus $j \in G_{1, \fP} = G_1 \cap G_{\fP}$.
      Let $\fP'$ be the prime in $L'$ below $\fP$. Then the natural surjection $G_1 \twoheadrightarrow H$ maps $j \in G_1$ to $j \in H$ and
      $G_{1,\fP}$ onto $H_{\fP'}$. So $j \in H_{\fP'}$ and (ii) for $L'/K'$ does not hold either.

      Now suppose that (iii) holds for $p$ and $L'/K'$. Then we have
      \[
        (L')\hcl \subseteq ((L')\hcl)^+(\zeta_p) \subseteq (L\hcl)^+(\zeta_p)
      \]
      and thus also
      \[
        (L')\hcl (L\hcl)^+ \subseteq (L\hcl)^+(\zeta_p).
      \]
      However, the field $(L')\hcl$ is a CM-field and therefore not contained in $(L\hcl)^+$. Hence the first inclusion in
      \[
        (L\hcl)^+ \subseteq (L\hcl)^+ (L')\hcl \subseteq L\hcl
      \]
      is proper. We find that $(L\hcl)^+ (L')\hcl = L\hcl$ is contained in $(L\hcl)^+(\zeta_p)$, a contradiction.
    \end{proof}

    \subsection{The main theorem}
    We now prove the following theorem which is Theorem \ref{ETNC-minus} of the introduction.

    \begin{theo} \label{ETNC-minus-II}
    Let $L/K$ be a Galois CM-extension of number fields with Galois group $G$ and let $p$ be a non-exceptional prime.
    If the Iwasawa $\mu$-invariant attached to $L$ and $p$ vanishes, then
    the $p$-minus part of the ETNC for the pair $(h^0(\Spec(L)), \Z G)$ is true.
    \end{theo}

    \begin{proof}
    We first observe that we may assume that $G$ is monomial. In fact, we already know
    that $T\Om(L/K,0)_p^-$ is torsion, since the strong Stark conjecture holds by \cite[Corollary 2]{ich-tame}.
    Then Proposition \ref{prop:ETNC-reduction} implies that we may assume that $G$ is either $p$-elementary
    or a direct product of a $p$-elementary group and a cyclic group of order $2$.
    In both cases, the group $G$ is nilpotent and thus monomial.
    Note that $p$ is still a non-exceptional prime by Lemma \ref{lem:exceptional-intermediate}.
    We will henceforth assume that $G$ is monomial. In particular, we may use
    the results of \cite{ich-tameII}, where the identity (\ref{eqn:values-padic-complex})
    is implicitly assumed to hold.

    Let $L_{\infty}$ and $K_{\infty}$ be the cyclotomic
    $\zp$-extensions of $L$ and $K$, respectively. We denote the
    Galois group of $K_{\infty}/K$ by $\Ga_K$. Hence $\Ga_K$ is
    isomorphic to $\zp$, and we fix a topological generator $\ga_K$.
    Accordingly, we set $\Ga_L = \Gal(L_{\infty}/L)$ with
    a topological generator $\ga_L$.
    Furthermore, we denote the $n$-th layer in the cyclotomic
    extension $L_{\infty}/L$ by $L_n$ such that $L_n/L$ is cyclic of
    order $p^n$.  We put
    $$\mc X_T^- := \lim_{\leftarrow}  A_{L_n}^{T}(p) .$$
    We denote the Galois group of $L_{\infty}/K$ by $\mc G$; hence
    $\mc G = H \rtimes \Ga,$
    where $H$ is a subgroup of $G$ and $\Ga$ is topologically generated by a preimage $\ga$ of $\ga_K$ under the canonical
    epimorphism $\mc G \twoheadrightarrow \mc G / H = \Ga_K$. We denote the Iwasawa algebra $\zp[[\mc G]]$ by $\La(\mc G)$.
    Then $\mc X_T^-$ is a finitely generated $R$-torsion $\La(\mc G)_- := \La(\mc G) / (1+j)$-module,
    where $R := \zp[[\Ga']]$ with $\Ga'\simeq \zp$ central in $\mc G$. Note that $R$ is isomorphic to $\zp[[T]]$,
    the power series ring in one variable over $\zp$.\\

    The vanishing of the Iwasawa $\mu$-invariant implies that the $\mu$-invariant of $\mc X_T^-$ also vanishes; hence the projective dimension
    of $\mc X_T^-$ over $\La(\mc G)_-$ is at most $1$ by \cite[Proposition 4.1]{ich-tameII}. Then by
    \cite[Lemma 6.2]{ich-Fitting} the $\La(\mc G)_-$-module $\mc X_T^-$ admits a quadratic presentation
    and thus $\Fitt_{\La(\mc G)_-}(\mc X_T^-)$ is well defined. This Fitting invariant is computed via the
    equivariant Iwasawa main conjecture (which is a theorem under our current hypotheses by Ritter and Weiss \cite{EIMC-theorem}
    and Kakde \cite{Kakde-mc}) in \cite[Theorem 4.4]{ich-tameII}. Since the precise statement of this theorem would force us to introduce
    a lot of further notation, we only state the following consequence \cite[Lemma 6.3]{ich-tameII}
    of this theorem which will be sufficient for our purposes:
    \beq \label{EIMC-consequence}
        \Fitt_{\zpg_-} ((\mc X^-_T)_{\Ga_L}) = [ \langle \theta_{S_p}^T(0) \rangle ]_{\nr(\zpg_-)},
    \eeq
    where $(\mc X^-_T)_{\Ga_L}$ denotes the $\Ga_L$-coinvariants of $\mc X^-_T$.\\

    We now adopt a method originally introduced by Wiles \cite{Wiles-Brumer} and further developed (in an equivariant way) by
    Greither \cite{Gr-Brumer} and the author \cite{ich-tame}. In fact, the following is carried out in some detail in \cite{ich-tameII};
    but there, the full integrality conjecture \ref{integrality-conj} is assumed to hold for an infinite class of field extensions.
    Since we only will use our results established in \S \ref{sec:int-results}, we have to take care if everything still works.

    \begin{lem} \label{fields-exist}
        Let $N>0$ be a natural number. Then there are infinitely many primes $r \in \Z$ such that
        \ben[(i)]
            \item
            $r \equiv 1 \mod p^N$.
            \item
            $j \in G_{\fR}$ for all primes $\fR$ in $L$ above $r$.
            \item
            The Frobenius automorphism $\Frob_p$ at $p$ in $\Gal(\Q(\zeta_r) / \Q)$ generates $\Gal(k_r / \Q)$,
            where $k_r$ denotes the unique subfield of $\Q(\zeta_r)$ of degree $p^N$ over $\Q$.
        \een
    \end{lem}
    \begin{proof}
        This is \cite[Lemma 6.5]{ich-tameII}. But the proof of \cite[Proposition 4.1]{Gr-Brumer} carries over unchanged to the present situation.
    \end{proof}

    Let $N \in \N$ be a positive integer and choose a prime $r$ as in Lemma \ref{fields-exist} which does not ramify in
    $L\hcl / \Q$. We put $L' := Lk_r$, $K' = Kk_r$ and $G' = \Gal(L'/K) = G \times C_N$, where
    $C_N \simeq \Gal(k_r / \Q)$ is cyclic of order $p^N$, generated by $\Frob_p$.
    Note that $p$ is still a non-exceptional prime for $L'/K$ and that $G'$ is also monomial.
    Moreover, we define
    $T' := T \cup S_r$, where $S_r$ denotes the set of places in $K$ above $r$.
    We have an exact sequence
    \[
        (\zp \otimes (\mc{O}_{L'} / \prod_{\fR |r} \fR)\mal)^- \into A_{L'}^{T'}(p) \twoheadrightarrow A_{L'}^{T}(p).
    \]
    Since $j \in G_{\fR}$ for all $\fR \mid r$, we may conclude
    as in \cite[p.~28]{ich-tame} to deduce that the leftmost term is trivial. We obtain an isomorphism
    \begin{equation} \label{eqn:ray-class-iso}
        A_{L'}^{T'}(p) \simeq A_{L'}^T(p)
    \end{equation}
    and hence $ A_{L'}^T(p)$ is cohomologically trivial as $G'$-module by \cite[Theorem 1]{ich-tame}.
    As in \cite[p.~28]{ich-tame} the restriction map induces an isomorphism
    \beq \label{C_N_iso}
        (A_{L'}^T(p))_{C_N} \simeq A_L^T(p).
    \eeq
    More precisely, the cokernel of the restriction map $A_{L'}^T(p) \to A_L^T(p)$ identifies with
    a quotient of $C_N$. However, $j$ acts by conjugation and thus trivially on $C_N$. Hence the cokernel is trivial.
    The composite map
    \[
        A_{L'}^T(p) \stackrel{\res}{\lto} A_{L}^T(p) \lto A_{L'}^T(p)
    \]
    is given by the norm of the cyclic group $C_N$. As $A_{L'}^T(p)$ is cohomologically trivial, the kernel of the norm
    is precisely $(\Frob_p - 1) A_{L'}^T(p)$. This gives the desired isomorphism (\ref{C_N_iso}).

    The sets $S_1$ and $T'$ are $(p,0)$-admissible and thus
    the Stickelberger element $\theta_{S_1}^{T'}(L'/K,0)$ lies in $\zeta(\mc M_p(G))[C_N]$ by Theorem \ref{Stickelberger-int-II}.
    However, we have
    \begin{equation} \label{eqn:two-Stickelberger}
        \theta_{S_1}^{T'}(L'/K,0) = \left(\frac{1-j}{2}\de_{S_r}(0)\right) \theta_{S_1}^{T}(L'/K,0)
    \end{equation}
    and we claim that
    \begin{equation} \label{eqn:differ-by-unit}
        \frac{1-j}{2}\de_{S_r}(0) \in \nr((\zp G'_-)\mal).
    \end{equation}
    In fact, as mentioned above,
    we have
    $$(\zp \otimes (\mc{O}_{L'} / \prod_{\fR |r} \fR)\mal)^-  = 0.$$
    Hence $\frac{1-j}{2}\de_{S_r}(0)$ is a generator
    of $\Fitt_{(\zp G')_-}(0)$ and therefore  lies in $\nr((\zp G'_-)\mal)$.
    However, we have $\nr((\zp G'_-)\mal) \subseteq \mc I_p(G') \subseteq \zeta(\mc M_p(G))[C_N]$ by Lemma \ref{int-lem},
    and thus
    \beq \label{theta-int}
        \theta_{S_1}^T(L'/K,0) \in \zeta(\mc M_p(G))[C_N]
    \eeq
    for all $N$.
    We define an element $\al_p \in \mc I_p(G)$ by
    $$\al_p = \prod_{\fp \in S_p \sm S_1} \nr(1- \ve_{\fp}\frob\me)$$
    such that we have an equality $\theta_{S_1}^T(L/K,0) \cdot \al_p = \theta_{S_p}^T(L/K,0)$.
    Similarly, we define $\al_p' \in \mc I_p(G')$ such that $\theta_{S_1}^{T}(L'/K,0) \cdot \al_p' =
    \theta_{S_p}^{T}(L'/K,0)$. Now choose a second natural number $M\leq N$ and put
    $$\nu := \sum_{i=0}^{p^M-1} \Frob_p^{i p^{N-M}} \in \zp C_N \subseteq \zeta(\zp G').$$
    The following result is \cite[Lemma 6.7]{ich-tameII}.

    \begin{lem} \label{nonzerolemma}
        Let $f$ be the least common multiple of the residual degrees $f_{\fp}(K/\Q)$ of all $\fp \in S_p$.
        If $N-M \geq v_p(|G| \cdot f)$, then $|G| \cdot \al_p'$ is a nonzerodivisor in $\zeta(\zp G') / \nu$.
    \end{lem}

        We now observe that enlarging $L$ to $L'$ does not affect the vanishing of $\mu$ by \cite[Theorem 11.3.8]{NSW}.
        Choose natural numbers $M \leq N$ such that $r = r(N)$ fulfills all the above conditions
        and $N-M \geq v_p(|G| \cdot f)$, where $f$ was defined in Lemma \ref{nonzerolemma}.
        Let $\mc G' = \Gal(L'_{\infty}/K)$ and let $\mc X_{T'}^-$ be the projective limit of the minus $p$-ray class groups
        $A_{L'_{n}}^{T'}(p)$. Then
        $\mc X_{T'}^-$ has projective dimension at most one as before and the EIMC for the extension $(L'_{\infty})^+/K$
        implies the following analogue of equation (\ref{EIMC-consequence}):
        \begin{equation} \label{eqn:easy-descent}
            \Fitt_{\zp G'_-} ((\mc X^-_{T'})_{\Ga_{L'}}) = [ \langle \theta_{S_p}^{T'}(L'/K,0) \rangle ]_{\nr(\zp G'_-)}.
        \end{equation}
        For each prime $\fp$
        of $K$ let $\fP' \subset L'$ be a prime above $\fp$. By \cite[Proposition 4.7]{ich-tameII}, we have a right exact sequence
        \begin{equation} \label{eqn:important-ses}
            (\bigoplus_{\fp \in S_p} \ind_{G'_{\fP'}}^{G'} \zp )^- \to (\mc X_{T'}^-)_{\Ga_{L'}} \twoheadrightarrow A_{L'}^{T'}(p).
        \end{equation}
        Note that $(\ind_{G'_{\fP'}}^{G'} \zp)^- = 0$ whenever $\fp \in S_{p,w} = S_p \cap S_1$,
        since $j$ lies in the decomposition group $G'_{\fP'}$ in this case;
        it is here, where we use that (ii) of Definition \ref{defi:exceptional} does not hold for $p$.
        Therefore the Fitting invariant of the leftmost term is generated by $\al'_p$.
        By (\ref{eqn:two-Stickelberger}) and (\ref{eqn:differ-by-unit}) the Stickelberger elements $\theta_{S_p}^{T'}(L'/K,0)$ and
        $\theta_{S_p}^{T}(L'/K,0)$ only differ by the norm of a unit.
        Hence $\theta_{S_p}^{T}(L'/K,0)$ is also a generator of $\Fitt_{\zp G'_-}((\mc X_{T'}^-)_{\Ga_{L'}})$ by (\ref{eqn:easy-descent}).
        The above sequence (\ref{eqn:important-ses}) gives rise to the
        following inclusion of Fitting invariants (cf.~\cite[Proposition 3.5 (iii)]{ich-Fitting}):
        $$\Fitt_{\zp G'_-}\left((\bigoplus_{\fp \in S_p} \ind_{G'_{\fP'}}^{G'} \zp)^-\right) \cdot \Fitt_{\zp G'_-}(A_{L'}^{T'}(p))
        \subseteq \Fitt_{\zp G'_-}((\mc X_{T'}^-)_{\Ga_{L'}}).$$
        If we choose a generator $c'$ of $\Fitt_{\zp G'_-}(A_{L'}^{T'}(p))$, there exists $x \in \zeta(\zp G')$ such that
        $$\al'_p c' = x \cdot \theta_{S_p}^{T}(L'/K,0) = x \cdot \al'_p \theta_{S_1}^{T}(L'/K,0).$$
        It follows from Lemma \ref{int-lem} and (\ref{theta-int}) that multiplication by $|G|$ yields an equality in
        $\zeta(\zp G')$
        such that Lemma \ref{nonzerolemma} gives
        \beq \label{equ-mod-nu}
            |G| \cdot c' \equiv |G| \cdot x \cdot \theta_{S_1}^{T}(L'/K,0) \mod \nu.
        \eeq
        Let $\aug: \zp G' \to \zp G$ be the natural augmentation map.
        Since Fitting invariants behave well under base change (cf.~\cite[Lemma 5.5]{ich-Fitting}),
        the element $c := \aug (c')$ generates the Fitting invariant of $A_L^T(p)$ by (\ref{eqn:ray-class-iso}) and (\ref{C_N_iso}).
        Since $\aug (\theta_{S_1}^{T}(L'/K,0)) = \theta_{S_1}^{T}(L/K,0)$ and $\aug (\nu) = p^M$,
        the congruence (\ref{equ-mod-nu}) implies
        $$c \equiv \aug(x) \cdot \theta_{S_1}^T(L/K,0) \mod p^{M-m} \mc I_p(G),$$
        where $p^m$ is the exact $p$-power dividing $|G|$. This gives an inclusion
        $$\Fitt_{\zpg}(A_L^T(p)) \subseteq [\langle \theta_{S_1}^{T}(L/K,0) \rangle]_{\nr(\zpg)},$$
        as we may choose $M$ arbitrarily large. Now we are done
        via Theorem \ref{ETNC_equiv}.
    \end{proof}

    We end this section with giving the proofs of some of the corollaries mentioned in the introduction.
    \begin{proof}[Proof of Corollary \ref{cor_int_conj}]
    This is an immediate consequence of Theorem \ref{ETNC-minus-II} and Theorem \ref{ETNC_implies_int}.
    \end{proof}

    \begin{proof}[Proof of Corollary \ref{cor-etnc}]
    The central conjecture (Conjecture 2.4.1) of \cite{BB_at_1} states that a certain element $T\Om(L/K,1) \in K_0(\zg, \R)$ vanishes.
    By \cite[Theorem 5.2]{BB_at_1} one has an equality
    $$\psi_G^{\ast}(T\Om(L/K,0)) - T\Om(L/K,1) = T\Om^{loc}(L/K,1),$$
    and the vanishing of the righthand side is equivalent to a conjecture of Bley and Burns \cite{epsilon} by \cite[Remark 5.4]{BB_at_1}.
    But this conjecture is known if $L/K$ is at most tamely ramified by \cite[Corollary 6.3 (i)]{epsilon}.
    Finally, if we suppose that Leopoldt's conjecture holds,
    then \cite[Theorem 1.1 and Corollary 1.2]{BB_at_1_II} imply the desired relation to the ETNC for the pair $(h^0(\Spec(L))(1), \zg)$.
    However, it is sufficient for our purposes that Leopoldt's conjecture holds on minus parts and this is in fact true:

    Recall that by \cite[Theorem 10.3.6]{NSW} Leopoldt's conjecture for odd $p$ is equivalent to the assertion that the canonical homomorphism
    \[
        \Delta: \mc O_L\mal \otimes \zp \lto \prod_{\fP \in S_p(L)} \widehat{\mc O}_{\fP}\mal
    \]
    is injective, where $\widehat{\mc O}_{\fP}\mal$ denotes the pro-$p$-completion of the group of units of the local field $L_{\fP}$
    (hence $\widehat{\mc O}_{\fP}\mal$ is canonically isomorphic to the group of principal units).
    By \cite[Lemma 10.3.13 and Lemma 8.7.7]{NSW} the kernel of $\Delta$ is torsion-free. However, the minus part of
    $\mc O_L\mal \otimes \zp$ are the $p$-power roots of unity in $L$. It follows that $\Delta$ is injective on minus parts as desired.
    \end{proof}

    \section{The non-abelian Brumer-Stark conjecture} \label{sec:Brumer-Stark}

    As before, let $L/K$ be a Galois CM-extension with Galois group $G$.
    The following conjecture has been formulated in \cite{ich-stark} and is a non-abelian generalization of Brumer's conjecture.

    \begin{con} \label{Brumer}
        Let $S$ be a finite set of places of $K$ containing $S_{\ram} \cup S_{\infty}$. Then
        $\fA_S \theta_S(0) \subseteq \mathcal I(G)$ and for each $x \in \mathcal H(G)$
        we have
        $$x \cdot \fA_S \theta_S(0) \subseteq \Ann_{\zg} (\cl_L).$$
    \end{con}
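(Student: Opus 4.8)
\emph{The plan.} I will prove the conjecture in its $p$-primary components for every prime $p$ that is non-exceptional in the sense of Definition \ref{defi:exceptional} and for which the associated $\mu$-invariant vanishes; since all but finitely many primes are non-exceptional and $2$ is always exceptional, this will in particular give the statement away from its $2$-primary part. So fix such a $p$ --- in particular $p$ is odd and $L/K$ is almost tame above $p$ --- and work throughout with $p$-parts.

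For the inclusion $\fA_S\theta_S(0)\subset\mathcal I(G)$ I would simply invoke the integrality conjecture, which is now available at $p$. One has $\theta_S(0)=L_S(0)^{\sharp}=\bigl(\prod_{\fp\in S\sm S_{\infty}}\nr(1-\ve_{\fp}\frob\me)\bigr)\cdot L(0)^{\sharp}\in U\cdot L(0)^{\sharp}$, whence $\fA_S\theta_S(0)\subset\fA_S\cdot U\cdot L(0)^{\sharp}=SKu(L/K,S)\subseteq SKu(L/K)$ because $S\supseteq S\ram\cup S_{\infty}$, and Corollary \ref{cor_int_conj} gives $SKu_p(L/K)\subseteq\mathcal I_p(G)$. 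Intersecting over the non-exceptional primes and using $\mathcal I(G)=\bigcap_p\mathcal I_p(G)\cap\zeta(\qg)$ then finishes this part.

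For the annihilation statement, note first that $\fA_S$ is generated over $\zeta(\zg)$ by the elements $\de_T(0)$ with $Hyp(S,T)$ and that $\de_T(0)\theta_S(0)=\theta_S^T(0)$, so it suffices to show $x\cdot\theta_S^T(0)\in\Ann_{\zpg}(\cl_L(p))$ for every such $T$ and every $x\in\mathcal H_p(G)$. Because $K$ is totally real, $L_S^T(0,\chi)=0$ for each even irreducible character $\chi$, so $(1+j)\theta_S^T(0)=0$; hence $x\cdot\theta_S^T(0)$ lies in the minus part and kills $(\cl_L(p))^+$ automatically, and it remains to annihilate $(\cl_L(p))^-$, which is a quotient of $A_L^T(p)$ via $\cl_L^T\onto\cl_L$. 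Now $A_L^T(p)$ is cohomologically trivial over $\zpg$ by \cite[Theorem 1]{ich-tame}, so $\Fitt_{\zp G_-}(A_L^T(p))=\Fitt^{\max}_{\zp G_-}(A_L^T(p))$ is well defined; Theorem \ref{ETNC-minus-II} gives $T\Om(L/K,0)_p^-=0$, and Theorem \ref{ETNC_equiv} then computes this Fitting invariant as $[\langle\theta_{S_1}^{T_1}(0)\rangle]_{\nr(\zp G_-)}$ for the particular sets $S_1,T_1$ occurring there. Comparing Stickelberger elements and ray class groups under change of $S$ and $T$ --- enlarging $S$ multiplies $\theta$ by reduced norms of elements of the local modules $U_{\fp}$, while moving primes ramified away from $p$ into $T$ changes $A_L^{?}(p)$ only by modules whose Fitting invariants are generated by the corresponding $\de$-factors --- one deduces $\theta_S^T(0)\in\Fitt^{\max}_{\zp G_-}(A_L^T(p))$. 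Finally Theorem \ref{annihilation-theo}, applied over $\La=\zp G_-$, gives $\mathcal H(\zp G_-)\cdot\theta_S^T(0)\subset\Ann_{\zp G_-}(A_L^T(p))$, and since $\mathcal H_p(G)$ maps into $\mathcal H(\zp G_-)$ under $\zpg\onto\zp G_-$ we get $x\cdot\theta_S^T(0)\in\Ann_{\zpg}(\cl_L(p))$, as required.

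The hard part is the identification $\theta_S^T(0)\in\Fitt^{\max}_{\zp G_-}(A_L^T(p))$: it is exactly here that the new content of the paper is used, via the minus-$p$-part of the ETNC (Theorem \ref{ETNC-minus-II}), which itself rests on the integrality of Stickelberger elements over $\mathcal M_p(H)[C]$ (Theorem \ref{Stickelberger-int-II}) and on the equivariant Iwasawa main conjecture. Everything else is bookkeeping with $\nr$-equivalence classes, the functoriality of Stickelberger elements and ray class groups under change of $S$ and $T$, and the harmless passage between $\zpg$, $\zp G_-$ and $\zg$; alternatively, one may bypass most of this by appealing directly to the fact, established in \cite{ich-stark}, that the vanishing of $T\Om(L/K,0)_p^-$ implies the $p$-part of Conjecture \ref{Brumer}, the route above merely making the roles of Theorems \ref{ETNC-minus-II} and \ref{annihilation-theo} explicit.
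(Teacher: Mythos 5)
Your proposal is correct and, despite the Fitting-invariant detour, lands on essentially the same route as the paper's own treatment (proof of Corollary \ref{cor-non-ab-stark}(i)): everything reduces to the vanishing of $T\Om(L/K,0)_p^-$ from Theorem \ref{ETNC-minus-II} together with the implication, established in \cite{ich-stark}, that this vanishing yields the (strong) Brumer--Stark property and hence the $p$-part of Conjecture \ref{Brumer}; you correctly point to this shortcut at the end. Two small cautions: the change-of-$(S,T)$ step you gesture at --- passing from the particular $S_1$ and $T$ of Theorem \ref{ETNC_equiv} to arbitrary $(S,T)$ satisfying $Hyp(S,T)$ --- is precisely the nontrivial content of \cite[Theorems 5.2 and 5.3]{ich-stark} and should not be presented as bookkeeping; and the line about ``intersecting over the non-exceptional primes'' to conclude $\fA_S\theta_S(0)\subset\mathcal I(G)$ overreaches, since the intersection formula $\mathcal I(G)=\bigcap_p\mathcal I_p(G)\cap\zeta(\qg)$ requires all primes $p$, so that part of the argument must remain $p$-local, in keeping with your opening framing (which is also what the paper itself establishes: the conjecture is not proved unconditionally here).
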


    \begin{rem}
        \bit
        \item
            If $G$ is abelian, the inclusion
            $\fA_S \theta_S(0) \subseteq \mathcal I(G) = \Z G$ holds by Theorem \ref{BCDR-theo-0} and,
            since $\mathcal H(G) = \Z G$ in this case, Conjecture \ref{Brumer} recovers
            Brumer's conjecture.
        \item
            Replacing the class group $\cl_L$ by its $p$-parts $\cl_L(p)$ for each rational prime $p$,
            Conjecture \ref{Brumer} naturally decomposes into local conjectures at each prime $p$.
            Note that it is then possible to replace $\mc H(G)$ by $\mc H_p(G)$ by \cite[Lemma 1.4]{ich-stark}.
        \item
            Burns \cite{Burns-derivatives} has also formulated a conjecture which generalizes many refined Stark conjectures to the
            non-abelian situation. In particular, it implies this generalization of Brumer's conjecture
            (cf.~\cite[Proposition 3.5.1]{Burns-derivatives}).
        \eit
    \end{rem}

    For $\al \in L\mal$ we define
    $$S_{\al} :=  \{\fp \subset K: \fp | N_{L/K}(\al) \}$$
    and we call $\al$ an {\it anti-unit} if $\al^{1+j} = 1$.
    Let $\om_L := \nr (|\mu_L|)$. The following is a non-abelian generalization of the
    Brumer-Stark conjecture (cf.~\cite[Conjecture 2.6]{ich-stark}).

    \begin{con} \label{Brumer-Stark}
        Let $S$ be a finite set of places of $K$ containing $S_{\ram} \cup S_{\infty}$.
        Then $\om_L \cdot \theta_S(0) \in \mathcal I(G)$ and for each $x \in \mathcal H(G)$
        and each fractional ideal $\fa$ of $L$, there is an anti-unit $\al = \al(x,\fa,S) \in L\mal$ such that
        $$\fa^{x \cdot \om_L \cdot \theta_S(0)} = (\al)$$
        and for each finite set $T$ of primes of $K$ such that $Hyp(S \cup S_{\al},T)$ is satisfied
        there is an $\al_T \in E_{S_{\al}}^T$
        such that
        \beq \label{abelian-ersatz}
            \al^{z \cdot \de_T(0)} = \al_T^{z \cdot \om_L}
        \eeq
         for each $z \in \mc H(G)$.
    \end{con}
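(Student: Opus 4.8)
The plan is to obtain Conjecture~\ref{Brumer-Stark} from the minus-$p$-part of the ETNC established in Theorem~\ref{ETNC-minus-II}, together with the reduction of Brumer--Stark-type statements to the ETNC carried out in \cite{ich-stark}. Thus the assertion one actually proves is that if $p$ is a non-exceptional prime and the Iwasawa $\mu$-invariant attached to $L$ and $p$ vanishes, then the $p$-part of Conjecture~\ref{Brumer-Stark} holds; this is Corollary~\ref{cor-non-ab-stark}(ii). Since $\cl_L=\bigoplus_p\cl_L(p)$, since $\mc H(G)=\bigcap_p\mc H_p(G)\cap\zeta(\Q G)$ and $\mc I(G)=\bigcap_p\mc I_p(G)\cap\zeta(\Q G)$, and since $\om_L\theta_S(0)\in\zeta(\Q G)$ by Siegel's theorem, it suffices to argue one non-exceptional prime $p$ at a time; such a $p$ is automatically odd, and $L/K$ is then almost tame above $p$.

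First I would dispose of the integrality assertion $\om_L\theta_S(0)\in\mc I(G)$, which it suffices to verify $p$-locally. One checks, as in \cite{ich-stark}, that $\om_L\theta_S(0)$ lies in the local Sinnott--Kurihara ideal $SKu_p(L/K)$ --- using $\om_L=\nr(|\mu_L|)$ and the fact that the Euler factors $1-\ve_\fp\frob\me$ for $\fp\in S\sm S_\infty$ lie in the local modules $U_\fp$ --- so that Corollary~\ref{cor_int_conj} yields $\om_L\theta_S(0)\in\mc I_p(G)$. Alternatively one may combine Theorem~\ref{ETNC_implies_int} with the vanishing of $T\Om(L/K,0)_p^-$ provided by Theorem~\ref{ETNC-minus-II}.

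For the annihilation statement, Theorem~\ref{ETNC-minus-II} gives $T\Om(L/K,0)_p^-=0$. Arguing as in the proof of Theorem~\ref{ETNC_equiv}, this vanishing --- via the localisation sequence in $K$-theory and Tate's canonical class $\tau_S$ --- translates into an identity of non-commutative Fitting invariants $\Fitt_{\zpg_-}(A_L^T(p))=[\langle\theta_{S_1}^T(0)\rangle]_{\nr(\zpg_-)}$, where $A_L^T(p)$ is cohomologically trivial by \cite[Theorem 1]{ich-tame}, $S_1$ is the set of wildly ramified primes above $p$, and $T$ is chosen as in {\S}5. Feeding this into the annihilation theorem (Theorem~\ref{annihilation-theo}) yields $\mc H_p(G)\cdot[\langle\theta_{S_1}^T(0)\rangle]_{\nr(\zpg_-)}\subset\Ann_{\zpg_-}(A_L^T(p))$. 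Multiplying through by $\om_L$, passing from $A_L^T(p)$ to the minus-$p$-part of $\cl_L$ along the surjection whose kernel is generated by the images of the $T$-congruence units --- which is where $\om_L$ and the group $E_{S_\al}^T$ enter --- and then chasing the exact sequences relating the global units of $L$, the principal ideals, the fractional ideals and $\cl_L$, one converts the resulting annihilation of ideal classes into the assertion that $\fa^{x\cdot\om_L\cdot\theta_S(0)}$ is principal with a generator $\al=\al(x,\fa,S)$; the minus idempotent forces $\al$ to be an anti-unit, and the refinement~(\ref{abelian-ersatz}) follows from the multiplicativity $\theta_S^T(0)=\de_T(0)\theta_S(0)$ together with the compatibility of $\cl_L$, $\cl_L^T$ and $E_{S_\al}^T$, exactly as in \cite{ich-stark}. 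Passage between the various admissible sets $S$ is handled using the Euler factors collected in $U$, as in the remarks following Conjecture~\ref{integrality-conj}.

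The step I expect to be the real obstacle is this last translation. The Fitting-invariant identity is only an equality of $\nr(\zpg_-)$-equivalence classes of modules, and turning it into honest ideal membership and an honest principal-ideal statement --- with a single anti-unit $\al$ and a single $\al_T\in E_{S_\al}^T$ valid for every $z\in\mc H(G)$ and every admissible $T$ simultaneously --- requires checking that the auxiliary choices (a splitting field of $\qp G_-$, representative modules and presentations, the auxiliary prime used to make $E_S^{T_{\nr}}$ torsionfree, and the change of $S$) can be organised coherently without disturbing integrality. Since all of this bookkeeping is already carried out in \cite{ich-stark}, the proof ultimately reduces to: Theorem~\ref{ETNC-minus-II} together with \cite{ich-stark} yields the $p$-part of Conjecture~\ref{Brumer-Stark} for every non-exceptional prime $p$.
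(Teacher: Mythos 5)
Your proposal correctly identifies that what the paper actually establishes is the $p$-part of Conjecture~\ref{Brumer-Stark} for non-exceptional primes under vanishing of $\mu$ (Corollary~\ref{cor-non-ab-stark}(ii)), and the route you describe — reduce everything to Theorem~\ref{ETNC-minus-II} and then invoke the machinery of \cite{ich-stark} — is exactly the paper's own proof, which applies \cite[Theorem~5.3]{ich-stark} to pass from $T\Om(L/K,0)_p^-=0$ to the strong Brumer--Stark property at $p$ and then \cite[Proposition~3.8]{ich-stark} to descend to the Brumer--Stark conjecture itself. Your intermediate sketch via Theorem~\ref{ETNC_equiv}, Theorem~\ref{annihilation-theo} and the exact-sequence chase is just a partial unpacking of what sits inside those two citations, so there is no essential difference in approach.
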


    \begin{rem}
    \bit
        \item
        If $G$ is abelian, we have $\mc I(G) = \mc H(G) = \Z G$ and $\om_L = |\mu_L|$.
        Hence it suffices to treat the case $x=z=1$.
        Then \cite[Proposition 1.2, p.~83]{Tate-Stark} states that the condition (\ref{abelian-ersatz})
        on the anti-unit $\al$ is equivalent to the assertion that the extension $L(\al^{1/\om_L}) / K$
        is abelian.
        \item
        As above, we obtain local conjectures for each prime $p$.
        \item
        The non-abelian Brumer-Stark conjecture (at $p$) implies the non-abelian Brumer conjecture (at $p$) by \cite[Lemma 2.12]{ich-stark}.
    \eit
    \end{rem}

    We now prove the remaining results mentioned in the introduction which are mainly concerned with the above two conjectures.

    \begin{proof}[Proof of Corollary \ref{cor-non-ab-stark}]
    Theorem \ref{ETNC-minus-II} and \cite[Theorem 5.3]{ich-stark} imply that $L/K$ fulfills the (non-abelian) strong Brumer-Stark property at $p$.
    This in turn implies (ii) by \cite[Proposition 3.8]{ich-stark} and (i) by \cite[Lemma 2.9]{ich-stark}. Since the condition $L\hcl \not\subseteq (L\hcl)^+(\zeta_p)$
    forces $\zeta_p \not\in L$, the $p$-part of the roots of unity is trivial and thus cohomologically trivial as $G$-module. Therefore
    Theorem \ref{ETNC-minus-II} and \cite[Theorem 4.1.1]{Burns-derivatives} imply (iii). Finally, as already mentioned above, the vanishing
    of $T\Om(L/K,0)$ is equivalent to the Lifted Root Number Conjecture of Gruenberg, Ritter and Weiss
    as formulated in \cite{GRW} (cf.~\cite[Theorem 2.3.3]{Burns_equivariantI}). Thus Theorem \ref{ETNC-minus-II} also implies (iv).
    \end{proof}

    \begin{proof}[Proof of Theorem \ref{non-abelian-Stickelberger}]
        It suffices to prove the Brumer-Stark conjecture at each odd prime $p$. We first assume that $p$ is unramified. As $L = L\hcl$ in this case,
        the prime $p$ is non-exceptional by Remark \ref{finite-exceptionals}. In particular,
        $L/\Q$ is almost tame above $p$ and the result follows from Corollary \ref{cor-non-ab-stark} if $\mu$ vanishes. If $\mu$ does not vanish
        (which conjecturally will never be the case) then $p \nmid [L:\Q]$ by assumption. Hence the $p$-minus part of the ETNC is equivalent
        to the $p$-part of the strong Stark conjecture for odd characters which is a theorem by \cite[Corollary 2]{ich-tame}.
        Hence the Brumer-Stark conjecture at $p$ holds by \cite[Theorem 5.2]{ich-stark}. Now assume that $p$ ramifies in $L$. Since
        $p$ is the only $p$-adic place of the rationals, we have $S_p \subseteq S\ram$, and the result follows from \cite[Corollary 4.6]{Nickel-IwasawaStark},
        where the identity (\ref{eqn:values-padic-complex}) is implicitly assumed to hold (it is only here, where we have to restrict to monomial extensions).
    \end{proof}

    \section{Negative integers} \label{sec:negative-integers}

    For completeness, we include the following result which is an easy consequence of \cite[Theorem 4.1]{ich-K-groups}
    and \cite[Corollary 2.10]{Burns-mc}.

    \begin{theo} \label{ETNC_implies_int_r}
    Let $L/K$ be a Galois extension of number fields with Galois group $G$ and let $r<0$. Assume that
    $L$ is totally real if $r$ is odd (resp.~that $L/K$ is CM if $r$ is even). If $p$ is an odd prime
    such that the $p$-part (resp.~minus $p$-part) of the ETNC for the pair $(h^0(\Spec(L))(r), \zg)$ holds,
    then the $p$-part of Conjecture \ref{integrality-conj_r} is true.
    In particular, this applies if the Iwasawa $\mu$-invariant attached
    to $L$ and $p$ vanishes.
    \end{theo}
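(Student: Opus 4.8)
The plan is to obtain the statement almost formally by splicing together two results already in the literature, so the proof should be very short. I would begin by recording the reductions attached to Conjecture \ref{integrality-conj_r}. By the first item of the remark following that conjecture it suffices to treat the single set $S = S\ram \cup S_\infty$; and by the identity $\mc I(G) = \bigcap_p \mc I_p(G) \cap \zeta(\qg)$ noted in the remark following Conjecture \ref{integrality-conj}, the assertion to be proved is precisely that, for the given odd prime $p$, one has $\nr(x) \cdot \theta_S(r) \in \mc I_p(G)$ for all $x$ in the $\zg$-annihilator of $\mu_{1-r}(L)$ --- that is, exactly the $p$-part of Conjecture \ref{integrality-conj_r}. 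By the third item of the remark following Conjecture \ref{integrality-conj_r}, this $p$-part (automatically away from $2$, since $p$ is odd) is subsumed by \cite[Conjecture 2.11]{ich-K-groups} for the pair $(h^0(\Spec(L))(r), \zg)$, which under our hypotheses is a CM-extension when $r$ is even and an extension of totally real fields when $r$ is odd.

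Next I would invoke \cite[Theorem 4.1]{ich-K-groups}, which establishes \cite[Conjecture 2.11]{ich-K-groups} for such a pair $(h^0(\Spec(L))(r), \zg)$ --- in the relevant $p$-part, resp.\ minus $p$-part --- under the assumption that the $p$-part, resp.\ minus $p$-part, of the ETNC for that pair holds. Combined with the reduction of the previous paragraph, this yields the first assertion of the theorem. I would also point out that the standing hypotheses ``$L$ totally real for odd $r$, $L/K$ a CM-extension for even $r$'' are precisely those under which the values $L_S^T(r,\chi)$ are the genuine, non-vanishing arithmetic quantities at stake and under which \cite[Theorem 4.1]{ich-K-groups} is formulated, so nothing additional has to be checked on that point.

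For the final (``in particular'') clause I would appeal to \cite[Corollary 2.10]{Burns-mc}. Since $r<0$, the leading term $L_S^{\ast}(r)$ is an honest value $L_S(r)$ and the Beilinson regulator plays no role, so the ETNC at $r$ for $(h^0(\Spec(L))(r), \zg)$ follows by descent along the cyclotomic tower from the (higher-dimensional) equivariant Iwasawa main conjecture, which is a theorem of Burns \cite{Burns-mc} and, independently, Kakde \cite{Kakde-mc}, once the $\mu$-invariant of $X_{\std}$ vanishes; and \cite[Corollary 2.10]{Burns-mc} already packages the conclusion we need, namely that the vanishing of the Iwasawa $\mu$-invariant attached to $L$ and $p$ implies the $p$-part, resp.\ minus $p$-part, of the ETNC for $(h^0(\Spec(L))(r), \zg)$. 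Feeding this into the first part finishes the argument. I do not expect any genuine obstacle here; the step deserving most attention --- and, since the mathematical substance is entirely borrowed, essentially the whole (very short) write-up --- is the bookkeeping needed to confirm that the normalisations of $\theta_S^T(r)$, of the module $\mc I_p(G)$, and of the Iwasawa $\mu$-invariant agree with those of \cite{ich-K-groups} and \cite{Burns-mc}, and that the case distinction ($r$ odd for the totally real situation, $r$ even for the CM situation) is aligned consistently on both sides.
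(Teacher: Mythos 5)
Your proposal matches the paper's approach exactly: the paper itself states Theorem \ref{ETNC_implies_int_r} with the single remark that it ``is an easy consequence of \cite[Theorem 4.1]{ich-K-groups} and \cite[Corollary 2.10]{Burns-mc},'' and your write-up simply unpacks those citations together with the reductions already supplied in the remarks after Conjectures \ref{integrality-conj} and \ref{integrality-conj_r}. Nothing is missing and no alternative route is taken.
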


    \begin{cor}
    Let $L/K$ be a Galois extension of number fields with Galois group $G$ and let $r<0$. Let $p$
    be an odd prime and assume that $G$ has a unique $2$-Sylow subgroup. Then the $p$-part of
    Conjecture \ref{integrality-conj_r} holds provided that the Iwasawa $\mu$-invariant
    attached to $p$ and the maximal real subfield of $L$ vanishes.
    \end{cor}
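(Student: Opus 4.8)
The plan is to obtain this by combining Theorem~\ref{ETNC_implies_int_r} with the reduction step of {\S}3, so that essentially no new argument is required --- only some bookkeeping with the $\mu$-invariants that intervene.

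First I would remove the trivial cases using the observations at the start of {\S}3: if $K$ is not totally real, or if $r$ is even and $L$ is not totally imaginary, then $\theta_{S_\infty}(r)=L(r)^\sharp=0$, hence $\theta_S(r)=0$ for every admissible $S$ and Conjecture~\ref{integrality-conj_r} holds vacuously. Thus I may assume $K$ totally real and, when $r$ is even, $L$ totally imaginary --- precisely the hypothesis on $L$ needed to invoke Proposition~\ref{reduction-step}. Since $G$ has a unique $2$-Sylow subgroup and $p$ is odd, Proposition~\ref{reduction-step} reduces the $p$-part of Conjecture~\ref{integrality-conj_r} for $L/K$ to the $p$-part of the same conjecture for $L^H/K$, where $H=H(r)$ is the subgroup defined in {\S}3; by construction $L^H$ is totally real if $r$ is odd and $L^H/K$ is a CM-extension if $r$ is even.

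Next I would apply Theorem~\ref{ETNC_implies_int_r} to $L^H/K$: its parity hypothesis ($L^H$ totally real for odd $r$, resp.\ $L^H/K$ a CM-extension for even $r$) is exactly what the previous step provides, so the $p$-part of Conjecture~\ref{integrality-conj_r} for $L^H/K$ follows from the vanishing of the Iwasawa $\mu$-invariant entering the $p$-part (resp.\ minus-$p$-part) of the ETNC for $(h^0(\Spec(L^H))(r),\Z[G/H])$. It remains to identify this $\mu$-invariant with the one attached to the maximal totally real subfield $L^+$ of $L$. For odd $r$ this is immediate, since $L^H$ is the fixed field of all complex conjugations of $L$, i.e.\ $L^H=L^+$. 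For even $r$, $L^H/K$ is a CM-extension whose maximal totally real subfield is again $L^+$ (because $L^+\subseteq L^H$ with $[L^H:L^+]\le 2$), and the minus-$p$-part of the ETNC at an even negative integer $r$ for such a CM-extension depends, via the Tate twist by $\zp(1-r)$ with $1-r$ odd --- equivalently via the quadratic character cutting out $L^H$ over $L^+$ --- only on the equivariant Iwasawa main conjecture for the totally real field $L^+$; hence the vanishing of $\mu$ attached to $L^+$ and $p$ suffices in this case as well (it is in any case implied by the vanishing of $\mu$ attached to $L^H$, so nothing is lost relative to the literal statement of Theorem~\ref{ETNC_implies_int_r}).

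I expect the only genuine subtlety to be this last identification in the even case --- namely that one really needs the $\mu$-invariant of the maximal totally real subfield, not of $L^H$ itself --- but this is dictated by the way Theorem~\ref{ETNC_implies_int_r} is established through \cite{ich-K-groups} and \cite{Burns-mc}, whose Iwasawa-theoretic input is the equivariant main conjecture for the totally real field, and it introduces no new ingredient. Everything else is a formal concatenation of Proposition~\ref{reduction-step} and Theorem~\ref{ETNC_implies_int_r}.
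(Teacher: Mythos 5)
Your proof is correct and takes exactly the paper's route, which is the one-line combination of Proposition~\ref{reduction-step} (to pass to $L^H/K$) with Theorem~\ref{ETNC_implies_int_r} (applied to $L^H/K$, whose parity hypothesis is met by construction of $H(r)$). Your added discussion of which $\mu$-invariant is needed in the even-$r$ case is a sensible unpacking of the ``in particular'' clause of Theorem~\ref{ETNC_implies_int_r} --- the paper itself leaves this implicit --- and does not change the structure of the argument.
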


    \begin{proof}
    This immediately follows from Theorem \ref{ETNC_implies_int_r} and Proposition \ref{reduction-step}.
    \end{proof}

\noindent Andreas Nickel~~ anickel3@math.uni-bielefeld.de\\
Universit\"{a}t Bielefeld,
    Fakult\"{a}t f\"{u}r Mathematik,
    Postfach 100131,
    33501 Bielefeld,
    Germany
\end{document}